\newtheorem{theorem}{Theorem}[section]
\newtheorem{lemma}[theorem]{Lemma}
\newtheorem{proposition}[theorem]{Proposition}
\newtheorem{corollary}[theorem]{Corollary}
\theoremstyle{definition}
\newtheorem{definition}[theorem]{Definition}
\newtheorem{example}[theorem]{Example}
\theoremstyle{remark}
\newtheorem{remark}[theorem]{Remark}
\numberwithin{equation}{section}
\def\CC{\mathbb C}
\def\RR{\mathbb R}
\def\NN{\mathbb N}
\def\TT{\mathbb T}
\def\e{\varepsilon}
\def\Re{{\rm Re}\,}
\def\Im{{\rm Im}\,}
\def\cR{{\mathcal R}}
\def\la{\lambda}
\def\ind{{\rm ind}}
\def\mapright#1{\smash{\mathop{\longrightarrow}\limits^{#1}}}
\def\mapdown#1{\Big\downarrow\rlap{$\vcenter{\hbox{$\scriptstyle#1$}}$}}
\begin{document}
\title[Real linear operators and numerical ranges]{Real linear operators and numerical ranges}
\author{Damian Ko\l aczek}
\address{Department of Applied Mathematics, University of Agriculture, ul. Balicka 253c, 30-198 Krak{\'o}w, Poland}
\email{Damian.Kolaczek@urk.edu.pl}

\author{Vladimir M\"uller}
\address{Institute of Mathematics,
Czech Academy of Sciences,
\v Zitna 25, Prague,
 Czech Republic}
\email{muller@math.cas.cz}

\subjclass{Primary 47A12; Secondary 47A05}


\thanks{The second author was supported by grant no. 25-15444K of GA \v CR and RVO:67985840}

\keywords{real linear operator, antilinear operators, numerical range}

\begin{abstract}
Real linear operators between two complex Banach spaces unify naturally two important classes of linear operators and antilinear operators. We give a~survey of basic geometric, spectral and duality properties of real linear operators.

The main goal of the paper is to introduce the numerical range of real linear operators on both Hilbert and Banach spaces and to study its properties. In particular, we show that the numerical range of real linear operators on complex Hilbert space is always a~convex set (on at least two-dimensional spaces). This generalizes the classical result of Hausdorff and Toeplitz for linear operators. 
\end{abstract}

\maketitle

\section{Introduction}

Let $X,Y$ be complex Banach spaces. A continuous mapping $\Phi:X\to Y$ is called a~real linear operator if $\Phi(x+x')=\Phi(x)+\Phi(x')$ and $\Phi(tx)=t\Phi(x)$ for all $x,x'\in X$ and $t\in\RR$. Real linear operators appear frequently in various applications, see e.g. \cite{GP}, \cite{GPP}, \cite{IM}, \cite{AP}, \cite{PS} and the references therein. A survey of the properties of real linear operators on complex Hilbert spaces was given in \cite{HR}.

The most important examples of real linear operators are linear operators and antilinear operators.
A continuous mapping $\Phi:X\to Y$ is called an antilinear operator if $\Phi(x+x')=\Phi(x)+\Phi(x')$ and $\Phi(\lambda x)=\bar\la \Phi(x)$ for all $x,x'\in X$ and $\la\in\CC$. 

Some properties of antilinear operators are analogous to the properties of linear operators, but there are also some significant differences. For various aspects of antilinear operators, see e.g. \cite{U}, \cite{HP0}, \cite{CKLP}, \cite{CKLP1}, \cite{KLL}, \cite{KLL1}, \cite{PSW}.

Each real linear operator can be written in a~canonical way as a~sum of a~linear operator and an antilinear operator.

In the present paper, we continue the study of real linear operators.
The paper is organized as follows. In the second section, we summarize some basic facts concerning real linear operators. In the next section, we show that the linear operators and antilinear operators are Birkhoff-James orthogonal in the Banach space of all real linear operators (between two given Banach spaces).

The set of all real linear operators on a~Banach space is a~real Banach algebra. The spectrum of real linear operators on complex Hilbert spaces was introduced and studied in \cite{HR} and \cite{T}.  
The most important difference from the spectral theory of linear operators is that the spectrum
may be empty. In the fourth section of the present paper, we summarize the basic properties of spectrum of real linear operators on Banach spaces.

The main goal of the paper is to introduce and study the notion of numerical range for real linear operators in the general case. For the special case of real linear operators on Hilbert space $\CC^n$ the notion of numerical range was already introduced in~\cite{HN}. 
The numerical range of linear operators is already a~classical concept and plays an important role in operator theory. The standard reference are two manuscripts of Bonsall and Duncan \cite{BD}, \cite{BD1}. For a more recent survey, see \cite{GR}.

The numerical range was recently extended and studied in the antilinear setting by a~number of papers, see \cite{ChHL1}, \cite{HL}, \cite{KM}.

In the present paper, we introduce and study the numerical range of real linear operators both on Hilbert and on Banach spaces. We prove the basic properties of the numerical range and its relation with the spectrum and duality.

Not surprisingly, the numerical range has nicer properties in the Hilbert space setting.  
We show that if the complex Hilbert space is at least two-dimensional then the numerical range of a~real linear operator is always convex subset of the complex plane. This generalizes the famous Hausdorff-Toeplitz theorem for linear operators.

It is well known that the numerical radius $w(T)$ of a~linear operator $T$ satisfies the inequality $w(T)\geq \|T\|/2.$
If $T$ is self-adjoint, then $w(T)=\|T\|$. For antilinear and real linear operators, the situation is more complicated. We study to what extent such results can be generalized to the real linear setting.

Another important result in the theory of the numerical range of linear operators is that the numerical range of a~$2\times 2$-matrix is always an elliptical disc (possibly degenerated to a~line segment or a~single point). We show that this is no longer true for real linear operators. The numerical range of a~real linear operator on a~$2$-dimensional complex Hilbert space is always convex, but it can be a~more complicated set.

In the last section, we give some examples of this kind obtained numerically by a~computer.

\section{Basic results}
Let $X,Y$ be complex Banach spaces. 
A mapping $\Phi:X\to Y$ is called real linear operator if it is continuous, $\Phi(x+x')=\Phi(x)+\Phi(x')$ and $\Phi(tx)=t\Phi(x)$ for all $x,x'\in X$ and real $t$. It is well known that a~real linear mapping is continuous if and only if it is bounded.  

For a~real linear operator $\Phi:X\to Y$ we will write $\|\Phi\|=\sup\{\|\Phi(x)\|: x\in X, \|x\|\le 1\}$. Denote by $\cR(X,Y)$ the set of all continuous real linear operators from $X$ to $Y$. If $Y=X$, then we write shortly $\cR(X)=\cR(X,X)$. 

In fact, a~continuous additive mapping is automatically real linear. This observation is already classical, see \cite {B}.

\begin{theorem}
Let $\Phi:X\to Y$ be a~continuous additive mapping (i.e., $\Phi(x+x')=\Phi(x)+\Phi(x')$ for all $x,x'\in X$). Then $\Phi(tx)=t\Phi(x)$ for all $x\in X$ and real $t$.
\end{theorem}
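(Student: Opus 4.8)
The plan is to exploit continuity to upgrade the additivity of $\Phi$ from a $\mathbb{Z}$-linearity statement to a $\mathbb{Q}$-linearity statement and then to a full $\mathbb{R}$-linearity statement. First I would observe that additivity alone forces $\Phi(0)=0$ (apply the identity with $x=x'=0$) and $\Phi(-x)=-\Phi(x)$, and then by an easy induction that $\Phi(nx)=n\Phi(x)$ for every $n\in\mathbb{Z}$ and every $x\in X$. Next, for $n\in\mathbb{N}$ and $x\in X$, writing $x = n\cdot(x/n)$ gives $\Phi(x) = n\,\Phi(x/n)$, hence $\Phi(x/n) = \tfrac{1}{n}\Phi(x)$; combining the two gives $\Phi(qx) = q\,\Phi(x)$ for every rational $q$.

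The final step is to pass from rationals to reals using continuity. Fix $x \in X$ and $t \in \mathbb{R}$, and choose a sequence of rationals $q_k \to t$. Then $q_k x \to t x$ in $X$, so by continuity of $\Phi$ we get $\Phi(q_k x) \to \Phi(tx)$; on the other hand $\Phi(q_k x) = q_k \Phi(x) \to t\,\Phi(x)$ in $Y$ (scalar multiplication being continuous in $Y$). By uniqueness of limits, $\Phi(tx) = t\,\Phi(x)$, which is the desired conclusion. Note that this argument uses continuity of $\Phi$ only at the single point $tx$ (or, after reduction, continuity at $0$ via the bound $\|\Phi(q_k x) - \Phi(tx)\| = \|\Phi((q_k - t)x)\|$ together with additivity), so one could even weaken the hypothesis, but continuity as stated is more than enough.

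The only mild subtlety — the ``main obstacle'', such as it is — is that at no point may one invoke $\mathbb{R}$-homogeneity before it has been proved; in particular the manipulations $\Phi(qx)=q\Phi(x)$ must be justified purely from additivity and the rational arithmetic above, not by treating $q$ as a scalar a priori. Once the rational case is in hand, the density of $\mathbb{Q}$ in $\mathbb{R}$ and the continuity hypothesis close the argument immediately, so I expect the proof to be short.
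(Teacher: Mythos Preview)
Your proposal is correct and follows essentially the same approach as the paper: establish $\Phi(nx)=n\Phi(x)$ for integers by induction, extend to rationals via $\Phi(x/n)=\tfrac{1}{n}\Phi(x)$, and then pass to all reals by density of $\mathbb{Q}$ and continuity of $\Phi$. The paper's proof is terser but the underlying argument is identical.
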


\begin{proof}
It is easy to see that $\Phi(nx)=n\Phi(x)$ for all $x\in X$ and $n\in\NN$. Consequently, $\Phi(tx)=t\Phi(x)$ for all $x\in X$ and rational $t$. Using the continuity of $\Phi$, we get $\Phi(tx)=t\Phi(x)$ for all $x\in X$ and the real number $t$.
\end{proof}

Moreover, any bounded additive mapping is continuous at the origin, and so continuous, see \cite {B}.

Basic examples of real linear operators are linear operators and antilinear operators.
In fact, each real linear operator can be written as a~sum of a~linear operator and an antilinear operator, cf. \cite{HR}.

\begin{theorem}
Let $\Phi:X\to Y$ be a~real linear operator. Then there exist a~linear operator $T:X\to Y$ and an antilinear operator $A:X\to Y$ such that $\Phi=T+A$. The operators $T$ and $A$ are determined uniquely.
\end{theorem}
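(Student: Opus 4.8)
The plan is to write down explicit formulas for $T$ and $A$ in terms of $\Phi$ and multiplication by $i$ on $X$, and then check that they have the stated properties. Concretely, I would put
\[
T(x)=\tfrac12\bigl(\Phi(x)-i\,\Phi(ix)\bigr),\qquad A(x)=\tfrac12\bigl(\Phi(x)+i\,\Phi(ix)\bigr)
\]
for $x\in X$. It is immediate that $T+A=\Phi$, that both maps are additive, and that both are bounded with $\|T\|,\|A\|\le\|\Phi\|$ (using $\|ix\|=\|x\|$), hence continuous; and each is $\RR$-homogeneous because $\Phi$ is (by the previous theorem). So the only thing left to verify is the behaviour under multiplication by $i$.

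For this I would compute $T(ix)$ and $A(ix)$ directly. Using $\RR$-homogeneity with $t=-1$ we have $\Phi(i\cdot ix)=\Phi(-x)=-\Phi(x)$, so
\[
T(ix)=\tfrac12\bigl(\Phi(ix)+i\,\Phi(x)\bigr)=i\,T(x),\qquad A(ix)=\tfrac12\bigl(\Phi(ix)-i\,\Phi(x)\bigr)=-i\,A(x).
\]
Now I would note the routine upgrade: an additive, $\RR$-homogeneous map $S$ with $S(ix)=iS(x)$ satisfies $S((a+bi)x)=aS(x)+bS(ix)=(a+bi)S(x)$ for all real $a,b$, hence is $\CC$-linear; likewise $S(ix)=-iS(x)$ forces $S((a+bi)x)=(a-bi)S(x)$, i.e. antilinearity. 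Applying this to $T$ and $A$ gives that $T$ is linear and $A$ is antilinear, so $\Phi=T+A$ is the desired decomposition.

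For uniqueness, I would observe that a map $S:X\to Y$ which is at once linear and antilinear satisfies $iS(x)=S(ix)=-iS(x)$, hence $S=0$. Consequently, if $\Phi=T_1+A_1=T_2+A_2$ are two such decompositions, then $T_1-T_2=A_2-A_1$ is simultaneously linear and antilinear, so it vanishes; thus $T_1=T_2$ and $A_1=A_2$. (Equivalently, any decomposition must satisfy $\Phi(x)=T(x)+A(x)$ and $\Phi(ix)=iT(x)-iA(x)$, a nonsingular $2\times2$ linear system in $T(x),A(x)$, which reproduces the formulas above and makes uniqueness automatic.)

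I do not anticipate any genuine obstacle here: the argument is elementary. The only place requiring a little care is the sign bookkeeping in the computations of $T(ix)$ and $A(ix)$, together with the remark that additivity and $\RR$-homogeneity, once combined with the correct $i$-homogeneity, automatically give full $\CC$-(anti)linearity.
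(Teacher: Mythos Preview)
Your proposal is correct and follows essentially the same approach as the paper: both define $T(x)=\tfrac12(\Phi(x)-i\Phi(ix))$ and $A(x)=\tfrac12(\Phi(x)+i\Phi(ix))$, verify the linear/antilinear behaviour, and obtain uniqueness from the observation that a map which is simultaneously linear and antilinear must vanish (the paper writes this out by evaluating $(T-T')+(A-A')$ at $x$ and at $ix$, which amounts to the same thing). Your write-up just fills in a bit more of the bookkeeping that the paper leaves as ``easy to see.''
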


\begin{proof}
Define $T$ and $A$ by
$$
Tx=\frac{1}{2}\bigl(\Phi (x)-i\Phi(ix)\bigr)
$$
and 
$$
Ax=\frac{1}{2}\bigl(\Phi (x)+i\Phi(ix)\bigr)
$$
for all $x\in X$. It is easy to see that $\Phi=T+A$, $T$ is a linear operator and $A$ an antilinear operator.

If $\Phi=T'+A'$ for another pair $T', A'$ with $T'$ linear and $A'$ antilinear, then
$(T-T')+(A-A')=0$. So for each $x\in X$ we have $(T-T')x+(A-A')x=0$ and 
$$
0=(T-T')(ix)+(A-A')(ix)= i(T-T')x-i(A-A')x,
$$
and so $(T-T')x-(A-A')x=0$. Hence $(T-T')x=0$ and $(A-A')x=0$. Since $x\in X$ was arbitrary, we have $T'=T$ and $A'=A$.

\end{proof}

Denote by $X^*$ and $Y^*$ the dual spaces of $X$ and $Y$, respectively. For a~linear operator $T:X\to Y$ let $T^*:Y^*\to X^*$ be its dual, i.e., the linear operator satisfying $\langle Tx,y^*\rangle=\langle x,T^*y^*\rangle$ for all $x\in X$ and $y^*\in Y^*$. 

For an antilinear operator $A:X\to Y$, let $A^*:Y^*\to X^*$ be its antilinear dual, i.e., the antilinear operator satisfying $\langle Ax,y^*\rangle=\overline{\langle x,A^*y^*\rangle}$ for all $x\in X$ and $y^*\in Y^*$. It is natural to define the dual of a~real linear operator using its canonical linear/antilinear decomposition.

If $\Phi:X\to Y$ is a~real linear operator, $\Phi=T+A$ with $T,A:X\to Y$, $T$ linear and $A$ antilinear, then define
$\Phi^*:Y^*\to X^*$ by $\Phi^*=T^*+A^*$.

Clearly
$$
\Re\langle \Phi x,y^*\rangle=\Re\langle x,\Phi^* y^*\rangle
$$ 
for all $x\in X$ and $y^*\in Y^*$.

The duality satisfies the usual properties.

\begin{proposition}
Let $\Phi,\Psi:X\to Y$ be real linear operators. Then
$(\Phi+\Psi)^*=\Phi^*+\Psi^*$.
\end{proposition}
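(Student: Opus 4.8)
The plan is to reduce the statement to the canonical linear/antilinear decomposition, where the analogous additivity is classical (for the linear part) or an immediate computation (for the antilinear part). Write $\Phi=T+A$ and $\Psi=S+B$ with $T,S:X\to Y$ linear and $A,B:X\to Y$ antilinear. Then $\Phi+\Psi=(T+S)+(A+B)$; since $T+S$ is linear and $A+B$ is antilinear, the uniqueness in the canonical decomposition shows that this \emph{is} the canonical decomposition of $\Phi+\Psi$. Hence, by the definition of the real linear dual, $(\Phi+\Psi)^*=(T+S)^*+(A+B)^*$.

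Next I would handle the two halves separately. For the linear parts, $(T+S)^*=T^*+S^*$ is the classical additivity of the Banach space adjoint, immediate from $\langle (T+S)x,y^*\rangle=\langle Tx,y^*\rangle+\langle Sx,y^*\rangle=\langle x,(T^*+S^*)y^*\rangle$ and uniqueness of the adjoint. For the antilinear parts, one computes, for $x\in X$ and $y^*\in Y^*$,
\[
\langle (A+B)x,y^*\rangle=\langle Ax,y^*\rangle+\langle Bx,y^*\rangle=\overline{\langle x,A^*y^*\rangle}+\overline{\langle x,B^*y^*\rangle}=\overline{\langle x,(A^*+B^*)y^*\rangle},
\]
and since $A^*+B^*$ is antilinear and bounded, the uniqueness of the antilinear dual gives $(A+B)^*=A^*+B^*$. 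Combining the two, $(\Phi+\Psi)^*=(T^*+S^*)+(A^*+B^*)=(T^*+A^*)+(S^*+B^*)=\Phi^*+\Psi^*$.

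I do not expect a genuine obstacle; the only point meriting a moment's care is the appeal to uniqueness of the canonical decomposition to identify $(T+S)+(A+B)$ as the decomposition of $\Phi+\Psi$, i.e. that the decomposition is additive in $\Phi$. Alternatively, one can bypass the decomposition and argue directly from $\Re\langle \Phi x,y^*\rangle=\Re\langle x,\Phi^*y^*\rangle$: for all $x\in X$, $y^*\in Y^*$ one has $\Re\langle (\Phi+\Psi)x,y^*\rangle=\Re\langle x,(\Phi^*+\Psi^*)y^*\rangle$, and the relation $\Re\langle x,z\rangle=0$ for all $x\in X$ forces $z=0$ (apply it to $x$ and to $ix$ to recover both $\Re$ and $\Im$ of the pairing), so $(\Phi+\Psi)^*y^*=(\Phi^*+\Psi^*)y^*$ for every $y^*$.
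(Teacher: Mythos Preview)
Your proof is correct. The paper states this proposition without proof, treating it as routine; your argument via the canonical linear/antilinear decomposition is exactly in the spirit of the paper's treatment of the companion product formula (Proposition~2.4), and your alternative via the identity $\Re\langle \Phi x,y^*\rangle=\Re\langle x,\Phi^*y^*\rangle$ is equally valid.
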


\begin{proposition} Let $X,Y,Z$ be Banach spaces. 
Let $\Phi:X\to Y$ and $\Psi:Y\to Z$ be real linear operators. Then
$(\Psi\Phi)^*=\Phi^*\Psi^*$.
\end{proposition}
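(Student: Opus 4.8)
The plan is to characterize $\Phi^*$ intrinsically by its action on the real pairing and then obtain the composition rule from the mere associativity of that pairing, rather than from the linear/antilinear decomposition. First I would record the elementary fact that the composition of two real linear operators is again real linear: additivity is immediate, $\RR$-homogeneity follows from $(\Psi\Phi)(tx)=\Psi(t\Phi x)=t\Psi\Phi x$ for real $t$, and boundedness is inherited from the factors. Hence $\Psi\Phi\in\cR(X,Z)$, its dual $(\Psi\Phi)^*$ is defined, and $\Phi^*\Psi^*$ is a well-defined real linear operator $Z^*\to X^*$.

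Next I would prove a small uniqueness lemma: if $C:Z^*\to X^*$ is real linear and $\Re\langle x,Cz^*\rangle=0$ for all $x\in X$ and $z^*\in Z^*$, then $C=0$. Indeed, fix $z^*$ and set $u=Cz^*\in X^*$; the hypothesis gives $\Re\langle x,u\rangle=0$ for every $x$, and replacing $x$ by $ix$ gives $-\Im\langle x,u\rangle=\Re\langle ix,u\rangle=0$, so $\langle x,u\rangle=0$ for all $x$ and thus $u=0$. Since $z^*$ was arbitrary, $C=0$. Applied to the difference of two candidate duals, this shows that the identity $\Re\langle \Phi x,y^*\rangle=\Re\langle x,\Phi^*y^*\rangle$ already stated in the excerpt determines $\Phi^*$ uniquely among real linear operators.

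With the lemma in hand the proof is a one-line computation. For $x\in X$ and $z^*\in Z^*$, applying the defining identity first for $\Psi$ (with the vector $\Phi x$) and then for $\Phi$ (with the functional $\Psi^*z^*$),
$$
\Re\langle \Psi\Phi x,z^*\rangle=\Re\langle \Phi x,\Psi^* z^*\rangle=\Re\langle x,\Phi^*\Psi^* z^*\rangle .
$$
Thus $\Phi^*\Psi^*$ satisfies the characterizing identity of $(\Psi\Phi)^*$, and since both are real linear, the uniqueness lemma yields $(\Psi\Phi)^*=\Phi^*\Psi^*$.

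The only genuinely delicate point is the uniqueness lemma — specifically the observation that the real part of the pairing, although not $\CC$-linear, still separates points once one also tests against $ix$; everything else is formal. As an alternative avoiding the lemma, one could write $\Phi=T+A$ and $\Psi=S+B$ in canonical form, sort $\Psi\Phi=(ST+BA)+(SA+BT)$ into its linear and antilinear parts, and combine $(ST)^*=T^*S^*$ with the defining pairings for the remaining three summands; there the care goes into tracking the conjugations from the antilinear factors, e.g. $\langle BAx,z^*\rangle=\overline{\langle Ax,B^*z^*\rangle}=\langle x,A^*B^*z^*\rangle$ with the two conjugations cancelling, and reassembling $T^*S^*+A^*B^*+A^*S^*+T^*B^*=(T^*+A^*)(S^*+B^*)=\Phi^*\Psi^*$.
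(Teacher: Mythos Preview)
Your argument is correct, and it takes a genuinely different route from the paper's. The paper proceeds exactly along the lines of your alternative sketch: it writes $\Phi=T+A$, $\Psi=S+B$, sorts $\Psi\Phi=(ST+BA)+(SA+BT)$ into its linear and antilinear parts, and then verifies the three identities $(SA)^*=A^*S^*$, $(BT)^*=T^*B^*$, $(BA)^*=A^*B^*$ one at a time via the defining pairings. Your main argument bypasses this bookkeeping entirely by observing that the identity $\Re\langle \Phi x,y^*\rangle=\Re\langle x,\Phi^*y^*\rangle$ (recorded in the paper just before these propositions) already characterizes $\Phi^*$, so the composition rule drops out of associativity of the real pairing together with your uniqueness lemma. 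This is cleaner and more conceptual---in effect it is the content of the paper's later Remark~\ref{R2.5}, which identifies $\Phi^*$ with the ordinary real-Banach-space adjoint $\Phi_r^*$ via the isometries $\eta_X,\eta_Y$; your proof extracts exactly what is needed from that identification without setting up the commutative diagram. The paper's decomposition approach, by contrast, has the minor virtue of making explicit how the antilinear conjugations interact, which is occasionally useful elsewhere.
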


\begin{proof}
Let $\Phi=T+A$, $\Psi=S+B$ be the canonical linear/antilinear decompositions of $\Phi$ and $\Psi$, respectively. Note that the product of two antilinear operators is linear and the product of a~linear operator and an antilinear operator is antilinear. So
$$
\Psi\Phi= (ST+BA)+(SA+TB)
$$
where $ST+BA$ is a~linear operator and $SA+TB$ antilinear.
By definition, 
$$
(\Psi\Phi)^*= (ST+BA)^*+(SA+TB)^*= (ST)^*+(BA)^*+(SA)^*+(BT)^*.
$$
On the other hand,
$$
\Phi^*\Psi^*=(T+A)^*(S+B)^*=
T^*S^*+T^*B^*+A^*S^*+A^*B^*.
$$
Clearly $(ST)^*=T^*S^*$. So it is sufficient to show that
\begin{itemize}
\item[(i)] $(SA)^*=A^*S^*$;
\item[(ii)] $(BT)^*=T^*B^*$;
\item[(iii)] $(BA)^*=A^*B^*$.
\end{itemize}

For $x\in X$ and $z^*\in Z^*$ we have 
$$
\langle SAx,z^*\rangle=
\langle Ax,S^*z^*\rangle=
\overline{\langle x, A^*S^*z^*\rangle}.
$$
So $(SA)^*=A^*S^*$, which proves (i). The remaining equalities (ii) and (iii) can be proved similarly.
\end{proof}

\begin{remark}\label{R2.5}
Let $X$ be a~complex Banach space and $X^*$ its dual, i.e., the set of all continuous complex functionals. Let $X_r$ be the space $X$ considered as a~real Banach space and $X_r^*$ its dual, that is, the set of all continuous real functionals on $X_r$. 

The spaces $X^*$ and $X_r^*$ are closely related. Let $x^*\in X^*$. Then $\Re x^*\in X_r^*$ (where $\Re x^*$ is defined by
$\langle x,\Re x^*\rangle=\Re\langle x,x^*\rangle$ for all $x\in X=X_r$). The mapping $x^*\mapsto \Re x^*$ is real linear, isometric and bijective, see \cite{BD}, Lemma 15.3. Let $\eta_X:X^*\to X_r^*$ be defined by $\eta_X(x^*)=\Re x^*$. Note that $\eta_X^{-1}$ satisfies
$\langle x,\eta_X^{-1}x_r^*\rangle= \langle x,x_r^*\rangle-i\langle ix,x_r^*\rangle$ for all $x\in X$ and $x_r^*\in X_r^*$.

Let $X,Y$ be complex Banach spaces and $\Phi:X\to Y$ a~real linear operator. Clearly $\Phi$ may be considered also as a~real linear operator $\Phi_r:X_r\to Y_r$. In addition to the dual $\Phi^*:Y^*\to X^*$ defined above, we may also consider the dual $\Phi_r^*:Y_r^*\to X_r^*$ defined in the usual way by $\langle x, \Phi_r^* y_r^*\rangle=\langle \Phi_r x,y_r^*\rangle$ for all $x\in X_r=X, y_r^*\in Y_r^*$. The operators $\Phi^*$ and $\Phi_r^*$ are related in the following way.

For $x\in X$ and $y^*\in Y^*$ we have
$$
\langle x, \eta_X(\Phi^*y^*)\rangle=
\Re\langle x, \Phi^*y^*\rangle=
\Re\langle \Phi x, y^*\rangle=
\langle \Phi x,\eta_Y(y^*)\rangle=
\langle x, \Phi_r^*\eta_Y(y^*)\rangle.
$$
So $\eta_X \Phi^*=\Phi_r^*\eta_Y$. Equivalently, $\Re \Phi^* y^*= \Phi_r^*(\Re y^*)$ for all $y^*\in Y^*$. Thus the following diagram commutes:
$$
\begin{matrix}
Y^*&\mapright{\eta_Y}&Y_r^*\cr
\mapdown{\Phi^*}&&\mapdown{\Phi_r^*}\cr
X^*&\mapright{\eta_X}&X_r^*
\end{matrix}
$$
\end{remark}

Recall that $\eta_X$ and $\eta_Y$ are bijective isometries. So many well-known properties for $\Phi_r^*$ are automatically satisfied also for $\Phi^*$.

\begin{proposition}\label{P2.6}
Let $X$ be a~complex Banach space and $\Phi:X\to X$ a~real linear operator. Then:
\begin{itemize}
\item[(i)] $\Phi$ has closed range if and only if $\Phi^*$ has closed range;
\item[(ii)] $\Phi^*$ is injective if and only if $\Phi$ has dense range;
\item[(iii)] $\Phi$ is injective if and only if $\Phi^*$ has $w^*$-dense range.
\end{itemize}
\end{proposition}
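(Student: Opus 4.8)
The plan is to reduce everything to the corresponding classical statements for the real-dual $\Phi_r^*$ via the commuting diagram of Remark \ref{R2.5}. The key observation is that $\eta_X$ and $\eta_Y$ are bijective isometries (indeed real linear homeomorphisms), and $\eta_X\Phi^* = \Phi_r^*\eta_Y$. Consequently $\Phi^*$ and $\Phi_r^*$ have the same range up to the isometry $\eta_X$: precisely, $\operatorname{ran}\Phi^* = \eta_X^{-1}(\operatorname{ran}\Phi_r^*)$. Since $\eta_X^{-1}$ is a homeomorphism, $\operatorname{ran}\Phi^*$ is closed in $X^*$ iff $\operatorname{ran}\Phi_r^*$ is closed in $X_r^*$; similarly $\Phi^*$ is injective iff $\Phi_r^*$ is injective. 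Likewise $\Phi$ is injective (resp.\ has closed/dense range) iff $\Phi_r$ is injective (resp.\ has closed/dense range), because $X$ and $X_r$, and $Y$ and $Y_r$, have the same underlying sets, norms, and topology — the only thing that changes is the scalar field, which is irrelevant to these three properties.

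For (i), I would invoke the closed range theorem for bounded real linear operators between real Banach spaces applied to $\Phi_r$: $\Phi_r$ has closed range iff $\Phi_r^*$ has closed range. Then translate both sides via the identifications above. For (ii), the classical duality fact is that $\Phi_r^*$ is injective iff $\Phi_r$ has dense range (this is immediate: $y_r^* \in \ker\Phi_r^*$ iff $y_r^*$ annihilates $\operatorname{ran}\Phi_r$, and by Hahn–Banach the only functional annihilating a dense subspace is $0$). Translating, $\Phi^*$ injective $\iff$ $\Phi_r^*$ injective $\iff$ $\Phi_r$ has dense range $\iff$ $\Phi$ has dense range. For (iii), the classical fact is that $\Phi_r$ is injective iff $\operatorname{ran}\Phi_r^*$ is $w^*$-dense in $X_r^*$; here one must also check that $\eta_X$ is a homeomorphism for the weak${}^*$ topologies, i.e.\ that $\eta_X$ and $\eta_X^{-1}$ are $w^*$-to-$w^*$ continuous. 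This follows from the explicit formulas in Remark \ref{R2.5}: $\langle x,\eta_X x^*\rangle = \Re\langle x,x^*\rangle$ shows $\eta_X$ is $w^*$-continuous, and $\langle x,\eta_X^{-1}x_r^*\rangle = \langle x,x_r^*\rangle - i\langle ix,x_r^*\rangle$ shows $\eta_X^{-1}$ is $w^*$-continuous. Hence $\eta_X$ carries $w^*$-dense sets to $w^*$-dense sets and back, and (iii) follows.

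The only genuine subtlety — and the step I would flag as the main obstacle — is making sure that the cited classical duality results (closed range theorem, the injectivity/dense-range correspondences, and the $w^*$-density statement) are invoked in a form valid for real Banach spaces and for \emph{real linear} maps $\Phi_r$, not merely for complex linear operators. These are all standard and hold verbatim in the real category, so the argument goes through, but it is worth stating explicitly that we are applying the real-space versions. Once that is granted, the proof is a purely formal transport along the two isometries $\eta_X,\eta_Y$, using in addition that $\eta_X$ is a $w^*$-homeomorphism for part (iii).
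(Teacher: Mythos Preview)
Your proposal is correct and follows exactly the route the paper intends: the paper does not write out a proof of Proposition~\ref{P2.6} at all, but immediately before it remarks that ``$\eta_X$ and $\eta_Y$ are bijective isometries, so many well-known properties for $\Phi_r^*$ are automatically satisfied also for $\Phi^*$,'' which is precisely the transport-along-$\eta$ argument you spell out. Your additional verification that $\eta_X$ is a $w^*$-homeomorphism (needed for part (iii)) is a detail the paper leaves implicit, so your write-up is in fact more complete than the paper's.
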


\section{Birkhoff-James orthogonality}

Recall that a~vector $x$ in a~Banach space $Z$ is Birkhoff-James orthogonal to a~vector $y\in Z$ (in symbol $x\perp_B y$) if
$$
\|x+\lambda y\|\ge\|x\|
$$
for all $\lambda\in\CC$.
The Birkhoff-James orthogonality is a~natural generalization of the orthogonality in Hilbert spaces. However, in general, it is not a~symmetrical relation.

The symmetrized version of the Birkhoff-James orthogonality is a~much stronger relation. For details, see e.g. \cite{AGKRZ}.

Let $X,Y$ be Banach spaces. Clearly, the set $\cR(X,Y)$ of all real linear operators from $X$ to $Y$ with the norm $\|\cdot\|$ is a~Banach space. 
We show that for any linear operator $T:X\to Y$ and antilinear operator $A:X\to Y$ we have both $T\perp_B A$ and $A\perp_B T$.

\begin{proposition}
Let $X$, $Y$ be Banach spaces, let $T:X\to Y$ be a~linear operator, $A:X\to Y$ an antilinear operator, 
let $\alpha,\beta,\alpha',\beta'\in\CC$, $|\alpha|\le|\alpha'|, |\beta|\le|\beta'|$. Then
$$
\|\alpha T+\beta A\|\le\|\alpha' T+\beta' A\|.
$$
\end{proposition}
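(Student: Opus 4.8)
The plan is to reduce the two-parameter inequality to a statement about a single complex parameter, using the fact that rescaling the input vector can absorb the coefficients. The key observation is that for a fixed unit vector $x\in X$, the map $\lambda\mapsto \|T x+\lambda A x\|$ depends only on $|\lambda|$, because replacing $x$ by $\mu x$ with $|\mu|=1$ leaves $\|x\|$ unchanged, sends $T x$ to $\mu T x$ and $A x$ to $\bar\mu A x$, hence $\|(\alpha T+\beta A)(\mu x)\| = \|\mu(\alpha T x) + \bar\mu(\beta A x)\| = \|\alpha T x + \bar\mu^2\beta A x\|$ after factoring out $|\mu|=1$. Since $\bar\mu^2$ ranges over the whole unit circle as $\mu$ does, the supremum over the orbit $\{\mu x\}$ equals $\sup_{|t|=1}\|\alpha T x + t\beta A x\|$, and this is manifestly a function of $|\alpha|$ and $|\beta|$ only (indeed of $|\alpha|$ times $\|T x\|$ and $|\beta|$ times $\|A x\|$).

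Concretely, I would first record the rescaling identity above, and deduce that for every $x$ with $\|x\|\le 1$,
$$
\|\alpha T x+\beta A x\|\le \sup_{|t|=1}\|\alpha T x+t\beta A x\|=\sup_{|t|=1}\|\alpha' T(s x)+t\beta' A(s x)\|
$$
where $s\in[0,1]$ is chosen so that the pair $(|\alpha|\,\|Tx\|,|\beta|\,\|Ax\|)$ equals $(|\alpha'|\,\|T(sx)\|,|\beta'|\,\|A(sx)\|)$ — wait, this needs care since a single scalar $s$ must match both ratios simultaneously. The cleaner route: since $|\alpha|\le|\alpha'|$ and $|\beta|\le|\beta'|$, write $\alpha=r_1 e^{i\theta_1}\alpha'$-style factors is awkward; instead argue in two steps, first increasing $|\alpha|$ to $|\alpha'|$ with $\beta$ fixed, then increasing $|\beta|$ to $|\beta'|$ with the new first coefficient fixed. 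So it suffices to prove the one-variable claim: if $|\alpha|\le|\alpha'|$ then $\|\alpha T+\beta A\|\le\|\alpha' T+\beta A\|$ (and symmetrically for $\beta$).

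For that one-variable claim, fix $x$ with $\|x\|\le 1$. By the circle-invariance, $\|\alpha T x+\beta A x\|\le\frac1{2\pi}\int_0^{2\pi}\|\alpha T x+e^{i\varphi}\beta A x\|\,d\varphi$ is false in general (the sup is not the average), so I instead use convexity directly: the function $\rho\mapsto \sup_{|t|=1}\|\rho T x + t\beta A x\|$ for $\rho\ge 0$ real is convex and even in $\rho$ (even because $-\rho$ is absorbed into $t$), hence nondecreasing on $[0,\infty)$; therefore $\|\alpha T x+\beta A x\|\le \sup_{|t|=1}\|\,|\alpha|\,e^{i\psi}T x+t\beta A x\|=\sup_{|t|=1}\|\,|\alpha|\,T x+t\beta A x\|\le \sup_{|t|=1}\|\,|\alpha'|\,T x+t\beta A x\|$, using first that the left side is bounded by the sup, then evenness/monotonicity in the modulus. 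Taking the supremum over $x$ with $\|x\|\le 1$ and noting $\sup_x\sup_t\|\,|\alpha'|\,Tx+t\beta Ax\|=\sup_t\sup_x\|\,|\alpha'|\,Tx+t\beta Ax\|=\sup_t\|\,|\alpha'|\,T+t\beta A\|$, and that each $\|\,|\alpha'|\,T+t\beta A\|$ equals $\|\alpha' T+\beta A\|$ by circle-invariance again (choose the unit vector orbit to realize the phase $t$), gives $\|\alpha T+\beta A\|\le\|\alpha' T+\beta A\|$. Repeating with the roles of $T$ and $A$ swapped handles the $\beta$-coefficient, and composing the two steps yields the proposition.

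The main obstacle is the bookkeeping around whether one can increase $|\alpha|$ and $|\beta|$ independently: a naive single-rescaling-of-$x$ argument fails because one scalar cannot fix both moduli, so the two-step reduction (and the fact that after step one the intermediate operator is again of the form linear-plus-antilinear with the same $A$) must be stated carefully. The convexity/evenness of $\rho\mapsto\sup_{|t|=1}\|\rho Tx+t\beta Ax\|$ is the only analytic input and is routine: it is a supremum of norms of affine functions of $\rho$, hence convex, and the substitution $t\mapsto -t$ shows it is even, so convex plus even forces monotone increasing on $[0,\infty)$.
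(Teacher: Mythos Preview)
Your argument is correct once the dust settles: the orbit identity $\sup_{|\mu|=1}\|(\alpha T+\beta A)(\mu x)\|=\sup_{|t|=1}\|\,|\alpha|\,Tx+t\beta Ax\|$ holds, the function $\rho\mapsto\sup_{|t|=1}\|\rho Tx+t\beta Ax\|$ is indeed convex and even (hence nondecreasing on $[0,\infty)$), and the two-step reduction first in $|\alpha|$ then in $|\beta|$ composes to give the result. The exposition is tangled by the abandoned false starts, but the final line of reasoning is sound.

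The paper's proof takes a different and shorter route. It pairs with a unit functional $y^*\in Y^*$, so that the circle orbit produces \emph{scalars} rather than vectors: $\max_\theta|\langle(\alpha T+\beta A)e^{i\theta}x,y^*\rangle|=|\alpha|\,|\langle Tx,y^*\rangle|+|\beta|\,|\langle Ax,y^*\rangle|$, using the elementary identity $\max_\theta|a+e^{i\theta}b|=|a|+|b|$. This explicit formula is simultaneously monotone in $|\alpha|$ and $|\beta|$, so no two-step argument or convexity lemma is needed; one just takes the supremum over $x$ and $y^*$. Your approach trades the use of duality for a convexity argument in $Y$ itself, which is conceptually more elementary but forces you to handle the two coefficients separately and to argue monotonicity indirectly. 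Both are valid; the paper's method is cleaner because it exploits that in $\CC$ the supremum over the circle has a closed form.
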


\begin{proof}
Let $x\in X$ and $y^*\in Y^*$ be unit vectors. We have
\begin{align*}
&\max_{0\le\theta<2\pi}\bigl|\langle(\alpha T+ \beta A)e^{i\theta} x, y^*\rangle\bigr|
\\
=
&\max_{0\le\theta<2\pi}\Bigl(\bigl|(e^{i\theta}\alpha \langle Tx,y^*\rangle+ e^{-i\theta}\beta\langle Ax,y^*\rangle\bigr|\Bigr)
\\
=
&\max_{0\le\theta<2\pi}\Bigl(\bigl|\alpha \langle Tx,y^*\rangle+ e^{-2i\theta}\beta\langle Ax,y^*\rangle\bigr|\Bigr)
\\
=&|\alpha|\cdot|\langle Tx,y^*\rangle|+|\beta|\cdot|\langle Ax,y^*\rangle|.
\end{align*}
So
\begin{align*}
&\|\alpha T+\beta A\|\\
&\sup\Bigl\{\max_{0\le\theta<2\pi}\{|\langle\alpha T+\beta A)e^{i\theta}x,y^*\rangle|\}: x\in X, y^*\in Y^*, \|x\|=1=\|y^*\|\Bigr\}
\\
=&\sup\Bigl\{|\alpha|\cdot |\langle Tx,y^*\rangle|+|\beta|\cdot|\langle Ax,y^*\rangle|: x\in X, y^*\in Y^*, \|x\|=1=\|y^*\|\Bigr\}
\\
\le
&\sup\Bigl\{|\alpha'|\cdot |\langle Tx,y^*\rangle|+|\beta'|\cdot|\langle Ax,y^*\rangle|: x\in X, y^*\in Y^*, \|x\|=1=\|y^*\|\Bigr\}
\\
=
&\sup\Bigl\{\max_{0\le\theta<2\pi}|\langle(\alpha' T+\beta' A)e^{i\theta}x,y^*\rangle|:x\in X, y^*\in Y^*, \|x\|=1=\|y^*\|\Bigr\}
\\
=&\|\alpha' T+\beta'A\|.
\end{align*}
\end{proof}

\begin{corollary}
Let $X,Y$ be Banach spaces, let $T:X\to Y$ be a~linear operator and $A:X\to Y$ an antilinear operator. 
Then
$$
T\perp_B A \qquad{\rm and}\qquad A\perp_B T
$$
in $R(X,Y)$.
\end{corollary}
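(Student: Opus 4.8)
The plan is to deduce the corollary directly from the preceding proposition by choosing the scalars appropriately. Recall that $T \perp_B A$ means $\|T + \lambda A\| \ge \|T\|$ for all $\lambda \in \CC$, and similarly $A \perp_B T$ means $\|A + \lambda T\| \ge \|A\|$ for all $\lambda \in \CC$. So the entire content of the corollary is a specialization of the inequality $\|\alpha T + \beta A\| \le \|\alpha' T + \beta' A\|$ obtained for suitable quadruples $(\alpha,\beta,\alpha',\beta')$.

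For the first relation, I would fix $\lambda \in \CC$ and apply the proposition with $\alpha = 1$, $\beta = 0$, $\alpha' = 1$, $\beta' = \lambda$. The hypotheses $|\alpha| \le |\alpha'|$ and $|\beta| \le |\beta'|$ read $1 \le 1$ and $0 \le |\lambda|$, both trivially true, so the proposition yields $\|T\| = \|1\cdot T + 0\cdot A\| \le \|T + \lambda A\|$. Since $\lambda$ was arbitrary, this is exactly $T \perp_B A$. For the second relation, symmetrically, fix $\lambda \in \CC$ and apply the proposition with $\alpha = 0$, $\beta = 1$, $\alpha' = \lambda$, $\beta' = 1$; the hypotheses become $0 \le |\lambda|$ and $1 \le 1$, and the conclusion is $\|A\| \le \|\lambda T + A\| = \|A + \lambda T\|$, i.e.\ $A \perp_B T$.

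There is essentially no obstacle here; the only small point to be careful about is that the proposition is stated with the pair of constraints $|\alpha|\le|\alpha'|$ and $|\beta|\le|\beta'|$ imposed simultaneously, so one must check that in each of the two applications both constraints are met, which they are because one of the four scalars is taken to be $0$ in each case. One may also remark that the proposition in fact gives more than Birkhoff--James orthogonality: it shows that $\|\alpha T + \beta A\|$ depends only on $|\alpha|$ and $|\beta|$ and is monotone in each, which immediately implies the two orthogonality relations as the special instances described above.
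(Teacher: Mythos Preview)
Your proof is correct and is exactly the intended argument: the paper states this as a corollary with no explicit proof, since it follows immediately from the preceding proposition by the specializations $(\alpha,\beta,\alpha',\beta')=(1,0,1,\lambda)$ and $(0,1,\lambda,1)$ that you wrote down.
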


\section{Spectral theory}

Let $X$ be a~complex Banach space. Then the set $\cR(X)$ of all real linear operators is a~complex Banach space. In the same time, it is a~real Banach algebra (the multiplication in $\cR(X)$ is associative and distributive, however, the axiom $\alpha(\Phi\Psi)=(\alpha\Phi)\Psi=\Phi(\alpha\Psi)$ is true only for $\Phi,\Psi\in \cR(X)$ and real $\alpha$; for complex $\alpha$ it is not satisfied in general).

We define the spectrum of operators $\Phi\in \cR(X)$ in the natural way:

\begin{definition}
Let $\Phi\in \cR(X)$. The spectrum $\sigma(\Phi)$ is defined by
$$
\sigma(\Phi)=\{\lambda\in\CC: \Phi-\lambda I \hbox{ is not bijective}\},
$$
where $I$ denotes the identity operator on $X$.
\end{definition}

Note that if $\Phi-\lambda I$ is bijective, then $(\Phi-\lambda I)^{-1}$ is continuous by the Banach open mapping theorem for real Banach spaces.

The spectral theory for real linear operators on complex Hilbert spaces was studied in \cite{HR}, see also \cite{T}.
Many properties of the spectrum of real linear operators are analogous to those of complex linear operators. However, there is a~significant difference --- the spectrum of a~real linear operator may be empty. The simplest example of this kind was given in \cite{HR}.
The spectrum of the real linear operator $(z_1,z_2)\mapsto (\bar z_2,-\bar z_1)$ acting on $\CC^2$ is empty.

On the other hand, the spectrum of a~real linear operator may contain uncountably many elements even in finite-dimensional spaces, see \cite{HP}.

We also define the following parts of the spectrum:

\begin{definition}
Let $\Phi\in \cR(X)$. Define:
\begin{itemize}
\item[(i)] point spectrum 
$$\sigma_p(\Phi)=\{\lambda\in\CC: \Phi-\lambda I \hbox{ is not injective}\};$$
\item[(ii)] approximate point spectrum 
$$\sigma_{ap}(\Phi)=\{\lambda\in\CC: \Phi-\lambda I \hbox{ is not bounded below}\};$$
\item[(iii)] surjective spectrum 
$$\sigma_{sur}(\Phi)=\{\lambda\in\CC: \Phi-\lambda I \hbox{ is not surjective}\};$$
\end{itemize}

\end{definition}

The basic properties of the spectrum of real linear operators are given in the following theorem.

\begin{theorem}\label{T4.3}
Let $X$ be a~complex Banach space and $\Phi\in \cR(X)$. Then:
\begin{itemize}
\item[(i)] \begin{align*}
&\sigma_p(\Phi)\subset\sigma_{ap}(\Phi)\subset\sigma(\Phi),\cr
&\sigma_{sur}(\Phi)\subset\sigma(\Phi),\cr
&\sigma(\Phi)=\sigma_p(\Phi)\cup\sigma_{sur}(\Phi)=\sigma_{ap}(\Phi)\cup\sigma_{sur}(\Phi);
\end{align*}
\item[(ii)] $\sigma(\Phi)\subset\{z\in \CC:|z|\le\|\Phi\|\}$;
\item[(iii)] $\sigma(\Phi)$, $\sigma_{ap}(\Phi)$ and $\sigma_{sur}(\Phi)$ are compact subsets of $\CC$;
\item[(iv)] 
\begin{align*}
&\sigma_{ap}(\Phi)=\sigma_{sur}(\Phi^*),\cr
&\sigma_{sur}(\Phi)=\sigma_{ap}(\Phi^*),\cr
&\sigma(\Phi)=\sigma(\Phi^*);
\end{align*}
\item[(v)] $\partial\sigma(\Phi)\subset \sigma_{ap}(\Phi)\cap\sigma_{sur}(\Phi)$, where $\partial$ denotes the topological boundary.

\end{itemize}

\end{theorem}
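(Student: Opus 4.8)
The plan is to reduce every item to a classical fact about bounded linear operators, watching only two possible sources of trouble: $\mathcal R(X)$ is a \emph{real} (not complex) Banach algebra, and the dual $\Phi^*$ has to be read through the isometric isomorphisms $\eta_X,\eta_Y$ of Remark~\ref{R2.5}. I will use throughout that ``$\Phi-\lambda I$ bijective'' is the same as ``$\Phi-\lambda I$ invertible in the algebra $\mathcal R(X)$'': the inverse map is automatically continuous by the Banach open mapping theorem for real Banach spaces, and it is additive and $\RR$-homogeneous, hence lies in $\mathcal R(X)$. Item (i) is then pure logic: a bounded-below operator is injective, so $\sigma_p(\Phi)\subset\sigma_{ap}(\Phi)$; if $\Phi-\lambda I$ is bijective its continuous inverse witnesses that it is bounded below and it is trivially surjective, so $\sigma_{ap}(\Phi)\subset\sigma(\Phi)$ and $\sigma_{sur}(\Phi)\subset\sigma(\Phi)$. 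Conversely, $\lambda\in\sigma(\Phi)$ means $\Phi-\lambda I$ fails to be injective or fails to be surjective; in the first case it is not bounded below, so $\lambda\in\sigma_{ap}(\Phi)$ (and even $\lambda\in\sigma_p(\Phi)$), in the second $\lambda\in\sigma_{sur}(\Phi)$. This gives both $\sigma(\Phi)=\sigma_{ap}(\Phi)\cup\sigma_{sur}(\Phi)$ and $\sigma(\Phi)=\sigma_p(\Phi)\cup\sigma_{sur}(\Phi)$.

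For (ii), if $|\lambda|>\|\Phi\|$ put $S=\lambda^{-1}\Phi\in\mathcal R(X)$, so $\|S\|<1$; the Neumann series $\sum_{n\ge0}S^n$ converges in $\mathcal R(X)$ (only completeness and submultiplicativity are needed) and inverts $I-S$, hence $\Phi-\lambda I=(-\lambda I)(I-S)$ is a composition of two bijections and $\lambda\notin\sigma(\Phi)$. I would prove (iv) before (iii) and (v). Since $\lambda I$ is linear, its canonical decomposition has zero antilinear part, and because the duality pairing is bilinear one has $(\lambda I)^*=\lambda I$; hence $(\Phi-\lambda I)^*=\Phi^*-\lambda I$ by additivity of $\Phi\mapsto\Phi^*$. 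The equivalences ``$\Psi$ bounded below $\iff\Psi^*$ surjective'' and ``$\Psi$ surjective $\iff\Psi^*$ bounded below'' for $\Psi\in\mathcal R(X)$ I would get from the classical versions over the \emph{real} Banach space $X_r$: $\Psi$ and its realification $\Psi_r$ are literally the same map (same norm, same lower bound, same range), and by the commuting square of Remark~\ref{R2.5} one has $\Phi^*=\eta_X^{-1}\Phi_r^*\eta_Y$ with $\eta_X,\eta_Y$ bijective isometries, so surjectivity and boundedness-below transfer between $\Psi^*$ and $\Psi_r^*$. Applying this to $\Psi=\Phi-\lambda I$ yields $\sigma_{ap}(\Phi)=\sigma_{sur}(\Phi^*)$ and $\sigma_{sur}(\Phi)=\sigma_{ap}(\Phi^*)$, and then $\sigma(\Phi)=\sigma_{ap}(\Phi)\cup\sigma_{sur}(\Phi)=\sigma_{sur}(\Phi^*)\cup\sigma_{ap}(\Phi^*)=\sigma(\Phi^*)$ by (i).

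For (iii): boundedness is (ii) together with $\sigma_{ap}(\Phi),\sigma_{sur}(\Phi)\subset\sigma(\Phi)$. Closedness of $\sigma(\Phi)$ holds because the invertible elements of $\mathcal R(X)$ form an open set and $\lambda\mapsto\Phi-\lambda I$ is continuous; closedness of $\sigma_{ap}(\Phi)$ holds because bounded-below operators form an open subset of $\mathcal R(X)$ (if $\|\Psi x\|\ge c\|x\|$ and $\|\Psi'-\Psi\|<c/2$, then $\|\Psi'x\|\ge(c/2)\|x\|$ for all $x$); and $\sigma_{sur}(\Phi)=\sigma_{ap}(\Phi^*)$ from (iv) is closed as well (or one argues directly that surjective operators form an open set in $\mathcal R(X)$). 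For (v) I would run the standard boundary argument: given $\lambda\in\partial\sigma(\Phi)$, pick $\lambda_n\to\lambda$ with $\Phi-\lambda_n I$ bijective; if $\Phi-\lambda I$ were bounded below, say $\|(\Phi-\lambda I)x\|\ge c\|x\|$, then for large $n$ the operators $\Phi-\lambda_n I$ would be bounded below with the common constant $c/2$ and surjective, hence bijective with $\|(\Phi-\lambda_n I)^{-1}\|\le 2/c$, and one more Neumann-series step forces $\Phi-\lambda I$ itself to be invertible, contradicting $\lambda\in\sigma(\Phi)$. Thus $\partial\sigma(\Phi)\subset\sigma_{ap}(\Phi)$, and applying this to $\Phi^*$ together with $\sigma(\Phi)=\sigma(\Phi^*)$ and $\sigma_{ap}(\Phi^*)=\sigma_{sur}(\Phi)$ gives $\partial\sigma(\Phi)\subset\sigma_{sur}(\Phi)$ as well.

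The only genuine obstacle is item (iv): one must make sure the Banach-space duality dictionary (bounded below versus surjective, and its behaviour under taking duals) is legitimately available for \emph{real-linear} operators, and this is exactly what the reduction to $X_r$ plus the isometric identification $\eta$ of Remark~\ref{R2.5} supplies. Everything else is the textbook argument applied verbatim; the fact that $\mathcal R(X)$ is only a real Banach algebra never intervenes, since no step multiplies by a nonreal scalar inside the algebra.
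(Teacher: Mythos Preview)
Your proof is correct and takes essentially the same approach as the paper: establish (iv) first via the realification in Remark~\ref{R2.5} (which is exactly what underlies the paper's appeal to Proposition~\ref{P2.6}), then derive (iii) and (v) from standard Banach-algebra arguments. The only cosmetic difference is in (v), where you assume $\Phi-\lambda I$ is bounded below and reach a contradiction via a single Neumann step, whereas the paper first shows $\|(\Phi-\lambda_n I)^{-1}\|\to\infty$ and then builds an approximate eigensequence; both are standard and equivalent.
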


\begin{proof}
$(i)$ is obvious.
\smallskip

(iv) follows from Proposition \ref{P2.6}.
\smallskip

(ii): Let $|\lambda|>\|\Phi\|$. Then the series $\sum_{j=0}^\infty \frac{-\Phi^j}{\lambda^{j+1}}$ is convergent and
is equal to
$(\Phi-\lambda I)^{-1}$. So $\sigma(\Phi)\subset\{z\in \CC:|z|\le\|\Phi\|\}$.
\smallskip

(iii): Clearly $\lambda\in\sigma_{ap}(\Phi)$ if and only if $\inf\bigl\{\|(\Phi-\lambda I)x\|: x\in X, \|x\|=1\bigr\}=0$.
It is easy to see that $\sigma_{ap}(\Phi)$ is a~closed set.

By (iv), $\sigma_{sur}(\Phi)=\sigma_{ap}(\Phi^*)$, so $\sigma_{sur}(\Phi)$ is also closed.
Hence the spectrum $\sigma(\Phi)=\sigma_{ap}(\Phi)\cup\sigma_{sur}(\Phi)$ is closed, and so a~compact subset of $\CC$. 
\smallskip

(v): Let $\lambda\in\partial\sigma(\Phi)$. 
Without loss of generality, we may assume that $\lambda=0$.
Then there exists a~sequence $(\lambda_n)\subset\CC\setminus\sigma(\Phi)$ such that $\lambda_n\to 0$.

We show that $\|(\Phi-\lambda_n I)^{-1}\|\to\infty$. Suppose the contrary. We have
$$
\Phi(\Phi-\lambda_n I)^{-1}=I+\lambda_n(\Phi-\lambda_n I)^{-1}.
$$
Let 
$$
\Psi_n=\lambda_n(\Phi-\lambda_n I)^{-1}.
$$
Choose $n$ sufficiently large such that
$\|\Psi_n\|<1.$

Then the series $\sum_{j=0}^\infty(-1)^j\Psi_n^j$ is convergent and converges to the inverse of $I+\Psi_n=\Phi(\Phi-\lambda_n I)^{-1}$. Hence $\Phi$ is invertible, a~contradiction.

Now for each $n$ find a~unit vector $x_n\in X$ with $\|(\Phi-\lambda_n I)^{-1}x_n\|\to\infty$. Then
$$
\Phi \Bigl(\frac{(\Phi-\lambda_n I)^{-1}x_n}{\|(\Phi-\lambda_n I)^{-1}x_n\|} \Bigr)
=\frac{x_n}{\|(\Phi-\lambda_n I)^{-1}x_n\|}+\frac{\lambda_n(\Phi-\lambda_n I)^{-1}x_n}{\|(\Phi-\lambda_n I)^{-1}x_n\|}\rightarrow 0
$$
as $n\to\infty$. So $0\in\sigma_{ap}(\Phi)$. Hence $\partial\sigma(\Phi)\subset \sigma_{ap}(\Phi)$.

Furthermore,
$$
\partial\sigma(\Phi)=
\partial\sigma(\Phi^*)\subset \sigma_{ap}(\Phi^*)=\sigma_{sur}(\Phi).
$$
\end{proof}

For antilinear operators we have more information. All types of spectrum are circularly symmetric.

\begin{theorem}\label{T4.4}
Let $A$ be an antilinear operator on a~Banach space $X$. Then:
\begin{itemize}
\item[(i)] if $\lambda\in\sigma_{p}(A)$ then $\{\lambda e^{i\theta}: 0\le \theta<2\pi\}\subset\sigma_{p}(A)$;
\item[(ii)] if $\lambda\in\sigma_{ap}(A)$ then $\{\lambda e^{i\theta}: 0\le \theta<2\pi\}\subset\sigma_{ap}(A)$;
\item[(iii)] if $\lambda\in\sigma_{sur}(A)$ then $\{\lambda e^{i\theta}: 0\le \theta<2\pi\}\subset\sigma_{sur}(A)$;
\item[(iv)] if $\lambda\in\sigma(A)$ then $\{\lambda e^{i\theta}: 0\le \theta<2\pi\}\subset\sigma(A)$.
\end{itemize}
\end{theorem}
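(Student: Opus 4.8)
The plan is to use a simple similarity. For a unimodular scalar $\alpha\in\CC$ let $U_\alpha\colon X\to X$ be the linear operator $U_\alpha x=\alpha x$; it is a bijective isometry with $U_\alpha^{-1}=U_{\bar\alpha}$. Since $A$ is antilinear, a one-line computation gives $U_\alpha A U_\alpha^{-1}=\alpha^2 A$ (the square appears because the inner $U_\alpha^{-1}$ gets conjugated by the antilinear $A$, whereas for a linear operator the analogous conjugation would do nothing). As $\lambda I$ is linear we have $U_\alpha(\lambda I)U_\alpha^{-1}=\lambda I$, so for every $\lambda\in\CC$ and every $\theta\in\RR$, taking $\alpha=e^{i\theta/2}$,
$$
U_\alpha(A-\lambda I)U_\alpha^{-1}=\alpha^2A-\lambda I=e^{i\theta}\bigl(A-\lambda e^{-i\theta}I\bigr).
$$

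First I would observe that conjugation by the bijective isometry $U_\alpha$, and multiplication by the nonzero scalar $e^{i\theta}$, each preserve all the relevant features of a real linear operator: being bijective, injective, bounded below, and surjective. Hence $A-\lambda I$ has any one of these four properties if and only if $A-\lambda e^{-i\theta}I$ does. Translating this into spectra: $\lambda\in\sigma_p(A)$ if and only if $\lambda e^{-i\theta}\in\sigma_p(A)$, and likewise with $\sigma_p$ replaced by $\sigma_{ap}$, by $\sigma_{sur}$, or by $\sigma$. Since $\{\lambda e^{-i\theta}:\theta\in\RR\}=\{\lambda e^{i\theta}:0\le\theta<2\pi\}$, each of these four sets is invariant under rotation about the origin, which is precisely (i)--(iv).

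I do not expect a genuine obstacle here; the whole point is the identity $U_\alpha AU_\alpha^{-1}=\alpha^2A$. If one prefers to bypass the similarity, (i) and (ii) are even more transparent directly: if $Ax=\lambda x$ with $x\ne0$ then $A(e^{i\psi}x)=e^{-i\psi}\lambda x=\lambda e^{-2i\psi}(e^{i\psi}x)$, so $e^{i\psi}x$ is an eigenvector for $\lambda e^{-2i\psi}$, and the same substitution sends an approximate eigenvector sequence for $\lambda$ to one for $\lambda e^{-2i\psi}$. Part (iii) then follows from (ii) together with Theorem \ref{T4.3}(iv) (since $A^*$ is again antilinear, $\sigma_{sur}(A)=\sigma_{ap}(A^*)$ is circularly symmetric), and (iv) follows from (i) and (iii) via the decomposition $\sigma(A)=\sigma_p(A)\cup\sigma_{sur}(A)$ of Theorem \ref{T4.3}(i).
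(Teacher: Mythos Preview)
Your proof is correct and is essentially the paper's argument, just organized slightly differently. The paper proves (i) by the direct eigenvector computation $A(e^{-i\theta/2}x)=e^{i\theta/2}Ax=\lambda e^{i\theta}(e^{-i\theta/2}x)$ and then says the remaining parts are ``similar''; your similarity identity $U_\alpha A U_\alpha^{-1}=\alpha^2 A$ is exactly this observation stated globally rather than pointwise, with the advantage that it dispatches (i)--(iv) uniformly without having to treat surjectivity separately or invoke duality. Your alternative route for (i)--(ii) is verbatim the paper's proof.
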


\begin{proof}
(i) Let $x\in X$, $\|x\|=1$ and $Ax=\lambda x$. Then
$$
A(e^{-i\theta/2}x)=
e^{i\theta/2} Ax=
\lambda e^{i\theta/2}x=
\lambda e^{i\theta}(e^{-i\theta/2}x).
$$
So $\lambda e^{i\theta}\in\sigma_p(A)$.

The remaining statements can be proved similarly.
\end{proof}

\section{Numerical range}
The numerical range of linear operators is already a~classical notion. The numerical range of antilinear operators was studied in \cite{HL}, \cite{ChHL} and \cite{KM}.

Let $X$ be a~Banach space. We write for short
$$
\Pi(X)=\bigl\{(x,x^*)\in X\times X^*: \|x\|=\|x^*\|=\langle x,x^*\rangle=1\bigr\}.
$$
It is natural to define the numerical range of real linear operators in the following way.

\begin{definition}
Let $\Phi\in \cR(X)$. Define the numerical range of $\Phi$ by
$$
W(\Phi)=\bigl\{\langle \Phi x,x^*\rangle: (x,x^*)\in\Pi(X)\}.
$$
\end{definition}

We will frequently use the following Bishop-Phelps-Bollob\'as theorem, see \cite{BD}, Theorem 16.1, which is an important tool for studying numerical ranges in Banach spaces.

\begin{theorem}\label{T3.1}
Let $X$ be a~Banach space, $\e>0$, $x\in X$, $x^*\in X^*$, $\|x\|\le 1$, $\|x^*\|= 1$ and 
$$
|\langle x,x^*\rangle-1\bigr|<\frac{\e^2}{4}.
$$
Then there exists $(y,y^*)\in\Pi(X)$ such that $\|y-x\|<\e$ and $\|y^*-x^*\|<\e$.
\end{theorem}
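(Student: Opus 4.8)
The plan is to reduce to real scalars and then run the classical Bishop--Phelps--Bollob\'as cone argument. Since $|\langle x,x^*\rangle-1|<\e^2/4$ forces $\Re\langle x,x^*\rangle>1-\e^2/4$, and since (Remark~\ref{R2.5}) the map $\eta_X:X^*\to X_r^*$ is a bijective isometry with $\eta_X(x^*)=\Re x^*$, it is enough to produce a unit vector $y\in X$ and $h\in X_r^*$ with $\|h\|=1$, $\langle y,h\rangle=1$, $\|y-x\|<\e$ and $\|h-\Re x^*\|<\e$: the functional $y^*:=\eta_X^{-1}(h)$ then has $\|y^*\|=1$, $\|y^*-x^*\|=\|h-\Re x^*\|<\e$, and $\Re\langle y,y^*\rangle=\langle y,h\rangle=1$, so $|\langle y,y^*\rangle|\le\|y\|\,\|y^*\|=1$ gives $\langle y,y^*\rangle=1$, i.e. $(y,y^*)\in\Pi(X)$. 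From now on I work in $X_r$ and write $g=\Re x^*$, so $\|g\|=1$ and $g(x)>1-\e^2/4$; I also assume $\e<2$, the case $\e\ge2$ being straightforward from the fact that the unit balls of $X$ and $X^*$ have diameter $2$. Fix a constant $k\in(\e/4,\e/2)$.

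Next I would put on the closed unit ball $C$ of $X$ the partial order $z\preceq z'$ defined by $g(z'-z)\ge k\|z'-z\|$; reflexivity and transitivity are immediate and antisymmetry holds because $k>0$. Every chain $\mathcal C$ in $(C,\preceq)$ has an upper bound: along $\mathcal C$ the numbers $g(\cdot)$ increase and are bounded above by $\|g\|=1$, hence form a Cauchy net, and the defining inequality of $\preceq$ then forces $\mathcal C$ itself to be Cauchy, so it converges in the complete space $X$ to some $\bar z\in C$; since for each fixed $z_0\in\mathcal C$ the set $\{z\in\mathcal C:z\succeq z_0\}$ is cofinal in $\mathcal C$, letting $z\to\bar z$ in the inequality $g(z-z_0)\ge k\|z-z_0\|$ shows $\bar z\succeq z_0$. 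By Zorn's lemma I pick a maximal $y\succeq x$.

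From $y\succeq x$ we get $k\|y-x\|\le g(y)-g(x)\le1-g(x)<\e^2/4$, hence $\|y-x\|<\e^2/(4k)<\e$. Maximality of $y$ means that the translated convex cone $y+K$, where $K=\{w\in X_r:g(w)\ge k\|w\|\}$, meets $C$ only at $y$. Since $0<k<1=\|g\|$ the cone $K$ is proper, so its interior is nonempty and does not contain $0$; therefore $y+\mathrm{int}\,K$ is a nonempty open convex set disjoint from $C$, and the Hahn--Banach separation theorem yields a nonzero $h\in X_r^*$ and $\alpha\in\RR$ with $h(z)\le\alpha\le h(y+w)$ for all $z\in C$ and $w\in K$. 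Taking $w=0$ gives $\alpha\le h(y)$, and since $K$ is a cone the right inequality forces $h\ge0$ on $K$, i.e. $h$ lies in the dual cone of $K$, which is the closed cone generated by $\{g+k\,h_0:h_0\in X_r^*,\ \|h_0\|\le1\}$; hence, after rescaling $h$ by a positive factor (and $\alpha$ accordingly), we may assume $\|h-g\|\le k$. The left inequality then gives $\|h\|=\sup_{z\in C}h(z)\le\alpha\le h(y)\le\|h\|\,\|y\|\le\|h\|$, so $h(y)=\|h\|$ and $\|y\|=1$. Replacing $h$ by $h/\|h\|$ moves it by $\big|\,\|h\|-1\,\big|=\big|\,\|h\|-\|g\|\,\big|\le k$, so the normalised functional is within $2k<\e$ of $g$, still has norm $1$, and still takes the value $1$ at $y$. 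Combined with the first paragraph, this proves the theorem.

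The step I expect to be the main obstacle is the geometric core of the third paragraph: verifying that $K$ has interior points so that Hahn--Banach applies, identifying the dual cone of $K$ accurately enough to extract $\|h-g\|\le k$ (this is also where the precise range of $k$ is calibrated so that the two estimates $\|y-x\|<\e$ and $\|h-g\|<\e$ come out strict under the hypothesis with constant $\e^2/4$), and making the Zorn-type convergence of chains rigorous. None of this is deep, but the estimates must be arranged with some care; alternatively one may simply invoke \cite{BD}, Theorem~16.1.
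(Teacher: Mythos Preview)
The paper does not prove Theorem~\ref{T3.1} at all: it simply quotes the Bishop--Phelps--Bollob\'as theorem and refers to \cite{BD}, Theorem~16.1. So there is nothing to compare your argument against; you are supplying strictly more than the paper does, and you yourself note at the end that one may instead just invoke \cite{BD}.

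Your sketch is the classical Bollob\'as cone argument and is essentially correct. The reduction to real scalars via $\eta_X$ is clean and uses Remark~\ref{R2.5} exactly as intended; the recovery of $\langle y,y^*\rangle=1$ from $\Re\langle y,y^*\rangle=1$ and $|\langle y,y^*\rangle|\le1$ is fine. The Zorn step is standard: the chain is Cauchy because $g$ is bounded on $C$ and the order controls distances by a factor $1/k$. The geometric core you flag is also sound: since $0<k<1=\|g\|$, the cone $K=\{w:g(w)\ge k\|w\|\}$ is pointed, closed, convex, and has nonempty interior (any $v$ with $\|v\|=1$, $g(v)>k$ is an interior point), while $0\notin\mathrm{int}\,K$; so Hahn--Banach separation applies. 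The identification of the dual cone as the (automatically closed) cone $\{t g+h_1:t\ge0,\ \|h_1\|\le tk\}$ is correct, and your calibration $k\in(\e/4,\e/2)$ makes both $\|y-x\|<\e^2/(4k)<\e$ and $\|h/\|h\|-g\|\le 2k<\e$ come out strict. The only cosmetic point is that ``closed cone'' in the dual should be read as weak-$*$ closed, but here the cone is already norm closed (and in fact weak-$*$ closed because $k<\|g\|$ bounds the scaling parameter $t$ along any weak-$*$ convergent net), so no extra work is needed. For the purposes of this paper, simply citing \cite{BD} as the authors do is entirely adequate.
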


The next result generalizes \cite{KM}, Theorem 3.5.

\begin{theorem}
Let $\Phi\in \cR(X)$. Then
$$
W(\Phi)\subset W(\Phi^*)\subset\overline{W(\Phi)}.
$$
If $X$ is reflexive, then $W(\Phi^*)=W(\Phi)$.
\end{theorem}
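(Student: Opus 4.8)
The plan is to mimic the classical Banach-space numerical range argument, using the identity $\Re\langle\Phi x,y^*\rangle=\Re\langle x,\Phi^*y^*\rangle$ together with the Bishop-Phelps-Bollob\'as theorem (Theorem~\ref{T3.1}) to pass between $\Pi(X)$ and $\Pi(X^*)$. First I would recall the canonical embedding $J_X:X\to X^{**}$ and observe that, because $\Phi^*$ was defined via the linear/antilinear decomposition $\Phi=T+A$ as $\Phi^*=T^*+A^*$, the second dual satisfies $\Phi^{**}\circ J_X=J_X\circ\Phi$ on $X$ (this follows from the corresponding facts $T^{**}J_X=J_XT$ and $A^{**}J_X=J_XA$, which are routine from the defining relations of $T^*,A^*$). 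This reduces the second inclusion $W(\Phi^*)\subset\overline{W(\Phi)}$ to the first inclusion $W(\Psi)\subset W(\Psi^*)$ applied to $\Psi=\Phi^*$, plus a density argument — provided I am careful that $W(\Phi^{**})$ restricted to states coming from $X$ via $J_X$ lands in $\overline{W(\Phi)}$.

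The heart is thus the inclusion $W(\Phi)\subset W(\Phi^*)$. Fix $(x,x^*)\in\Pi(X)$; I want to show $\langle\Phi x,x^*\rangle\in W(\Phi^*)$, i.e. that there is $(y^*,x^{**})\in\Pi(X^*)$ with $\langle\Phi^*y^*,x^{**}\rangle=\langle\Phi x,x^*\rangle$. The natural candidate is $x^{**}=J_X x$ and $y^*=x^*$: then $(x^*,J_Xx)\in\Pi(X^*)$ since $\|x^*\|=\|J_Xx\|=\langle x^*,J_Xx\rangle=\langle x,x^*\rangle=1$, and $\langle\Phi^*x^*,J_Xx\rangle=\langle x,\Phi^*x^*\rangle$. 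The subtlety is that $\langle x,\Phi^*x^*\rangle$ need not equal $\langle\Phi x,x^*\rangle$ — only their real parts agree in general. So I would instead argue: we know $\Re\langle\Phi x,x^*\rangle=\Re\langle x,\Phi^*x^*\rangle$; to recover the full complex number, replace $x$ by $e^{it}x$. For each real $t$, the pair $(e^{it}x, e^{it}x^*)$ — more precisely one must be slightly careful, but $(e^{it}x,e^{-it}x^*)$ lies in $\Pi(X)$ if $(x,x^*)$ does, while $\langle\Phi(e^{it}x),e^{-it}x^*\rangle$ mixes the linear and antilinear parts by $e^{it}$ and $e^{-it}$ respectively. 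This suggests the cleanest route is to prove the inclusion directly for the linear part $T$ and antilinear part $A$ of $\Phi$ separately, where the classical (resp.\ the antilinear analogue from \cite{KM}) statement is available, and then note $W(\Phi)$, $W(T)$, $W(A)$ are linked pointwise at each fixed state $(x,x^*)$ by $\langle\Phi x,x^*\rangle=\langle Tx,x^*\rangle+\langle Ax,x^*\rangle$ with the matching identity $\langle\Phi^*x^*,J_Xx\rangle=\langle T^*x^*,J_Xx\rangle+\langle A^*x^*,J_Xx\rangle$; since $\langle T^*x^*,J_Xx\rangle=\langle x,T^*x^*\rangle=\langle Tx,x^*\rangle$ exactly (linear duality is honest on complex scalars) and likewise $\langle A^*x^*,J_Xx\rangle=\overline{\langle x,A^*x^*\rangle}=\langle Ax,x^*\rangle$, we get $\langle\Phi^*x^*,J_Xx\rangle=\langle\Phi x,x^*\rangle$ on the nose. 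Hence $W(\Phi)\subset W(\Phi^*)$ with the explicit state $(x^*,J_Xx)$, and the complex-vs-real-part worry evaporates once one uses the correct dual pairings.

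For the second inclusion $W(\Phi^*)\subset\overline{W(\Phi)}$: take $(x^*,x^{**})\in\Pi(X^*)$ and $\e>0$. By Goldstine's theorem pick a net, or rather by Bishop-Phelps-Bollob\'as applied in $X^*$, one first perturbs so that $x^{**}$ is \emph{norm}-attained; the standard trick (as in \cite{BD}, \cite{KM}) is: since $\|x^{**}\|=1$ and $J_X(X)$ is norm-dense is false, but $J_X(B_X)$ is $w^*$-dense in $B_{X^{**}}$, choose $x\in B_X$ with $|\langle x^*,J_Xx\rangle-\langle x^*,x^{**}\rangle|<\e^2/4$, hence $|\langle x,x^*\rangle-1|<\e^2/4$; by Theorem~\ref{T3.1} there is $(y,y^*)\in\Pi(X)$ with $\|y-x\|<\e$, $\|y^*-x^*\|<\e$. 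Then $\langle\Phi y,y^*\rangle\in W(\Phi)$, and I estimate $|\langle\Phi y,y^*\rangle-\langle\Phi^*x^*,x^{**}\rangle|$: bound $|\langle\Phi y,y^*\rangle-\langle\Phi x,x^*\rangle|\le\|\Phi\|(\|y-x\|+\|y^*-x^*\|)$ — using that $\Phi$ is real linear so $\|\Phi z\|\le\|\Phi\|\|z\|$ — and $|\langle\Phi x,x^*\rangle-\langle\Phi^*x^*,x^{**}\rangle|=|\langle\Phi^*x^*,J_Xx\rangle-\langle\Phi^*x^*,x^{**}\rangle|\le\|\Phi^*\|\,|\langle\cdot\rangle|$, controlled by the $w^*$-closeness; here I should choose $x$ in the $w^*$-neighbourhood of $x^{**}$ determined by the finitely many functionals $x^*$ and $\Phi^*x^*$ so that both $|\langle x^*,J_Xx-x^{**}\rangle|$ and $|\langle\Phi^*x^*,J_Xx-x^{**}\rangle|$ are $<\e^2/4$. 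Letting $\e\to0$ gives $\langle\Phi^*x^*,x^{**}\rangle\in\overline{W(\Phi)}$. I expect the main obstacle to be exactly this bookkeeping in the $w^*$-approximation step — ensuring the Bishop-Phelps-Bollob\'as perturbation and the Goldstine-type $w^*$-density are invoked in the right order so that all relevant pairings are simultaneously close — rather than anything conceptually deep. Finally, if $X$ is reflexive then $J_X$ is onto, so $\Pi(X^*)$ is exactly $\{(x^*,J_Xx):(x,x^*)\in\Pi(X)\}$ and the pointwise identity $\langle\Phi^*x^*,J_Xx\rangle=\langle\Phi x,x^*\rangle$ from the first part yields $W(\Phi^*)=W(\Phi)$ with no closure needed.
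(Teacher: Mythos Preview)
There is a genuine error in the core identity you rely on. You claim
$\langle A^*x^*,J_Xx\rangle=\overline{\langle x,A^*x^*\rangle}=\langle Ax,x^*\rangle$,
but the first equality is wrong: the canonical embedding $J_X$ is \emph{linear}, so by definition $\langle A^*x^*,J_Xx\rangle=(J_Xx)(A^*x^*)=\langle x,A^*x^*\rangle$, not its conjugate. Combining this with the defining relation $\langle Ax,x^*\rangle=\overline{\langle x,A^*x^*\rangle}$ gives
\[
\langle \Phi^*x^*,J_Xx\rangle=\langle Tx,x^*\rangle+\overline{\langle Ax,x^*\rangle},
\]
which in general is \emph{not} equal to $\langle\Phi x,x^*\rangle=\langle Tx,x^*\rangle+\langle Ax,x^*\rangle$. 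So the ``complex-vs-real-part worry'' you initially flagged was in fact correct, and it does not evaporate. This breaks your argument for $W(\Phi)\subset W(\Phi^*)$, your estimate in the second inclusion (where you use the same false identity to write $\langle\Phi x,x^*\rangle=\langle\Phi^*x^*,J_Xx\rangle$), and your reflexive-case conclusion.

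The paper repairs exactly this discrepancy by a phase rotation: writing $\langle Ax,x^*\rangle=|\langle Ax,x^*\rangle|\,e^{i\theta}$ and replacing $(x,x^*)$ by $(e^{i\theta}x,\,e^{-i\theta}x^*)\in\Pi(X)$, one gets $\langle A(e^{i\theta}x),e^{-i\theta}x^*\rangle=e^{-2i\theta}\langle Ax,x^*\rangle=\overline{\langle Ax,x^*\rangle}$, so that at the rotated state the two expressions do coincide. The same rotation is inserted after the Bishop--Phelps--Bollob\'as step in the proof of $W(\Phi^*)\subset\overline{W(\Phi)}$: once $(y,y^*)\in\Pi(X)$ is found, one passes to $(e^{i\theta}y,e^{-i\theta}y^*)$ with $\theta=\arg\langle Ay,y^*\rangle$, and only then compares with $\lambda$. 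Your Goldstine/BPB outline for the second inclusion is otherwise the right skeleton --- you just need to add this rotation before estimating. For the reflexive case, the clean argument is symmetry ($W(\Phi)\subset W(\Phi^*)\subset W(\Phi^{**})$ together with $\Phi^{**}J_X=J_X\Phi$), not the pointwise identity.
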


\begin{proof}
Let $\Phi=T+A$ with $T$ a~linear operator and $A$ antilinear operator. Let $\la\in W(\Phi)$. Then there exist $x\in X$ and $x^*\in X^*$ with $\|x\|=\|x^*\|=\langle x,x^*\rangle=1$ and $\langle\Phi x,x^*\rangle=\la$.
Let $\langle Ax,x^*\rangle=|\langle Ax,x^*\rangle|\cdot e^{i\theta}$ for some $\theta\in[0,2\pi)$.
As usually, we identify $X$ with the corresponding subspace of $X^{**}$.

Let $y=e^{i\theta}x\in X\subset X^{**}$ and $y^*=e^{-i\theta}x^*$. Then $\|y\|=\|y^*\|=\langle y^*,y\rangle=1$ and 
$\langle y,\Phi^*y^*\rangle\in W(\Phi^*)$, where
$$
\langle y,\Phi^*y^*\rangle=
\langle y,T^*y^*\rangle+\langle y,A^*y^*\rangle=
\langle x,T^*x^*\rangle+e^{2i\theta}\langle x,A^*x^*\rangle
$$
$$
=
\langle Tx,x^*\rangle +e^{2i\theta}\overline{\langle Ax,x^*\rangle}=
\langle Tx,x^*\rangle+e^{i\theta}|\langle Ax,x^*\rangle|=
\langle\Phi x,x^*\rangle=\la.
$$
So $\la\in W(\Phi^*)$ and $W(\Phi)\subset W(\Phi^*)$. 

If $X$ is a~reflexive Banach space then we have $W(\Phi^*)=W(\Phi)$ by the symmetry.

Let $\lambda\in W(\Phi^*)$ and $\e>0$. So there exist $x^*\in X^*$ and $x^{**}\in X^{**}$ such that
$\|x^*\|=\|x^{**}\|=\langle x^{**},x^{*}\rangle=1$ and $\lambda=\langle x^{**},\Phi^* x^*\rangle$.

Since the unit ball of $X$ is $w^*$-dense in the unit ball in $X^{**}$ by the Goldstine theorem, there exists $x\in X$ such that $\|x\|\le 1$, 
$\bigl|\langle x,x^*\rangle-1\bigr|<\frac{\e^2}{4}$ and
$\bigl|\langle x,\Phi^*x^*\rangle-\lambda\bigr|<\e$.

By the Bishop-Phelps-Bollobas theorem, there exist $(y,y^*)\in\Pi(X)$ such that $\|y-x\|<\e$ and $\|y^*-x^*\|<\e$. 
Let $\Phi=T+A$ with $T$ linear operator and $A$ antilinear operator. Let
$\langle Ay,y^*\rangle=|\langle Ay,y^*\rangle|\cdot e^{i\theta}$ for some $\theta\in[0,2\pi)$. 
Let $z=e^{i\theta}y$ and $z^*=e^{-i\theta}y^*$. Then
$\langle\Phi z,z^*\rangle\in W(\Phi)$ and
$$
\langle\Phi z,z^*\rangle=
\langle T y,y^*\rangle+e^{-2i\theta}\langle Ay,y^*\rangle=
\langle T y,y^*\rangle+\overline{\langle A y,y^*\rangle}=
\langle y,\Phi^*y^*\rangle.
$$
We have
\begin{align*}
&\bigl|\langle \Phi z,z^*\rangle-\lambda\bigr|
=
\bigl|\langle y,\Phi^* y^*\rangle-\lambda\bigr|\cr
\le&
|\langle  y-x,\Phi^* y^*\rangle|+|\langle  x,\Phi^*(y^*-x^*)\rangle|+\bigl|\langle x,\Phi^* x^*\rangle-\lambda\bigr|
<
2\e\|\Phi^*\|+\e.
\end{align*}
Since $\e>0$ was arbitrary, we conclude that $\lambda\in \overline{W(\Phi)}$.

\end{proof}

The numerical range is related to the spectrum in the usual way.

\begin{theorem}
Let $\Phi\in \cR(X)$. Then:
\begin{itemize}
\item[(i)] $\sigma_p(\Phi)\subset W(\Phi)$;
\item[(ii)] $\sigma(\Phi)\subset \overline{W(\Phi)}$.
\end{itemize}
\end{theorem}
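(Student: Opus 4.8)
The plan is to prove the two inclusions separately, modeling the argument on the classical linear case but carefully tracking the antilinear part.

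For (i), suppose $\la\in\sigma_p(\Phi)$. Then $\Phi-\la I$ is not injective, so there is a unit vector $x\in X$ with $\Phi x=\la x$. By the Hahn-Banach theorem there is a functional $x^*\in X^*$ with $\|x^*\|=1$ and $\langle x,x^*\rangle=1$, so that $(x,x^*)\in\Pi(X)$. Then $\langle\Phi x,x^*\rangle=\langle\la x,x^*\rangle=\la$, hence $\la\in W(\Phi)$.

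For (ii), recall from Theorem~\ref{T4.3} that $\sigma(\Phi)=\sigma_{ap}(\Phi)\cup\sigma_{sur}(\Phi)$, and from (iv) of that theorem together with the preceding result $W(\Phi^*)\subset\overline{W(\Phi)}$ that $\sigma_{sur}(\Phi)=\sigma_{ap}(\Phi^*)$ and it suffices to handle the approximate point spectrum of $\Phi$ and of $\Phi^*$. So let $\la\in\sigma_{ap}(\Phi)$; then $\Phi-\la I$ is not bounded below, so there is a sequence of unit vectors $x_n\in X$ with $\|(\Phi-\la I)x_n\|\to 0$. Pick $x_n^*\in X^*$ with $\|x_n^*\|=1=\langle x_n,x_n^*\rangle$, so $(x_n,x_n^*)\in\Pi(X)$. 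Then
$$
|\langle\Phi x_n,x_n^*\rangle-\la|=|\langle(\Phi-\la I)x_n,x_n^*\rangle|\le\|(\Phi-\la I)x_n\|\to 0,
$$
so $\la\in\overline{W(\Phi)}$. This proves $\sigma_{ap}(\Phi)\subset\overline{W(\Phi)}$. Applying the same argument to $\Phi^*$ gives $\sigma_{ap}(\Phi^*)\subset\overline{W(\Phi^*)}\subset\overline{W(\Phi)}$, hence $\sigma_{sur}(\Phi)=\sigma_{ap}(\Phi^*)\subset\overline{W(\Phi)}$. Combining, $\sigma(\Phi)=\sigma_{ap}(\Phi)\cup\sigma_{sur}(\Phi)\subset\overline{W(\Phi)}$.

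The main point to watch is that these estimates never actually use linearity of $\Phi$ — only the bounds $|\langle\Phi x,x^*\rangle|\le\|\Phi\|$ and the Cauchy-Schwarz-type inequality $|\langle\Phi x-\la x,x^*\rangle|\le\|\Phi x-\la x\|\,\|x^*\|$, both of which hold verbatim for real linear operators. The only subtlety is in (ii): we cannot directly say $\sigma_{sur}(\Phi)\subset\overline{W(\Phi)}$ by a surjectivity argument in $X$ itself, which is why we pass to the dual via $\sigma_{sur}(\Phi)=\sigma_{ap}(\Phi^*)$ and use the already-established relation $W(\Phi^*)\subset\overline{W(\Phi)}$. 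No reflexivity or completeness beyond what is built into the Banach space hypothesis is needed.
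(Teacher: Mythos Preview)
Your proof is correct and follows essentially the same route as the paper: part~(i) is identical, and for part~(ii) both you and the paper first show $\sigma_{ap}(\Phi)\subset\overline{W(\Phi)}$ via approximating sequences and norming functionals, then handle $\sigma_{sur}(\Phi)$ by passing to $\sigma_{ap}(\Phi^*)$ and invoking the relation $W(\Phi^*)\subset\overline{W(\Phi)}$ established in the preceding theorem.
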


\begin{proof}
(i): Let $\lambda\in\sigma_p(\Phi)$. Then there exists a~unit vector $x\in X$ with $\Phi x=\lambda x$. Let $x^*\in X^*$ be a~unit functional satisfying $\langle x,x^*\rangle =1$. Then
$$
\langle\Phi x,x^*\rangle=\langle \lambda x,x^*\rangle=\lambda\in W(\Phi).
$$
\smallskip

(ii): Let $\lambda\in\sigma_{ap}(\Phi)$. Then there exists a~sequence $(x_n)$ unit vector in $X$ with $(\Phi-\lambda I) x_n\to 0$. Let $x_n^*\in X^*$ be unit functionals satisfying $\langle x_n,x_n^*\rangle =1$. Then $\langle\Phi x_n,x_n^*\rangle\in W(\Phi)$ and
$$
\langle\Phi x_n,x_n^*\rangle=
\langle(\Phi-\lambda I) x_n,x_n^*\rangle+\langle\lambda x_n,x_n^*\rangle\to\lambda.
$$
So $\lambda\in\overline{W(\Phi)}$ and $\sigma_{ap}(\Phi)\subset \overline{W(\Phi)}$.

We have
$$
\sigma_{sur}(\Phi)=\sigma_{ap}(\Phi^*)\subset \overline {W(\Phi^*)}=\overline{W(\Phi)}.
$$
So $\sigma(\Phi)=\sigma_{ap}(\Phi)\cup\sigma_{sur}(\Phi)\subset \overline{W(\Phi)}$.

\end{proof}

If $A:X\to X$ is an antilinear operator, then $W(A)$ is circularly symmetric (i.e., if $\lambda\in W(A)$ then $e^{i\theta}\lambda\in W(A)$ for all $\theta\in[0,2\pi)$). For real linear operators, we have the following relation.

\begin{lemma}\label{circle}
Let $X$ be a~Banach space, let $T:X\to X$ be a~linear operator and $A:X\to X$ an antilinear operator. Let $(x,x^*)\in\Pi(X)$. Then
$$
W(T+A)\supset \bigl\{z\in\CC: |z-\langle Tx,x^*\rangle|= |\langle Ax,x^*\rangle|\bigr\}.
$$
\end{lemma}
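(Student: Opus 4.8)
The plan is to fix the pair $(x,x^*)\in\Pi(X)$ and rotate it in a way that makes the antilinear part of $\langle(T+A)x,x^*\rangle$ sweep out a full circle while leaving the linear part fixed. Concretely, for $\theta\in[0,2\pi)$ consider the unit pair $(e^{i\theta}x, e^{-i\theta}x^*)$, which indeed lies in $\Pi(X)$ since $\langle e^{i\theta}x,e^{-i\theta}x^*\rangle=\langle x,x^*\rangle=1$ and the norms are unchanged. Then
$$
\langle (T+A)(e^{i\theta}x), e^{-i\theta}x^*\rangle
= e^{i\theta}e^{-i\theta}\langle Tx,x^*\rangle + \overline{e^{i\theta}}\,e^{-i\theta}\langle Ax,x^*\rangle
= \langle Tx,x^*\rangle + e^{-2i\theta}\langle Ax,x^*\rangle,
$$
using linearity of $T$ and antilinearity of $A$ in the first variable (the functional is complex linear). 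As $\theta$ ranges over $[0,2\pi)$, the factor $e^{-2i\theta}$ traces out the full unit circle, so the expression above traces out exactly the circle of radius $|\langle Ax,x^*\rangle|$ centered at $\langle Tx,x^*\rangle$.

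Thus every point $z$ with $|z-\langle Tx,x^*\rangle|=|\langle Ax,x^*\rangle|$ is of the form $\langle(T+A)y,y^*\rangle$ for some $(y,y^*)\in\Pi(X)$, hence lies in $W(T+A)$. This gives the claimed inclusion. I would write this out as: for a given such $z$, write $z=\langle Tx,x^*\rangle + r e^{i\varphi}$ where $r=|\langle Ax,x^*\rangle|$; choosing $\theta$ with $e^{-2i\theta}\langle Ax,x^*\rangle = re^{i\varphi}$ (possible since both sides have modulus $r$ — if $r=0$ the circle degenerates to the single point $\langle Tx,x^*\rangle$ which is trivially attained) yields $z=\langle(T+A)(e^{i\theta}x),e^{-i\theta}x^*\rangle\in W(T+A)$.

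There is essentially no obstacle here — the argument is a short computation, and is in fact the same rotation trick already used in the proof of the preceding theorem (the $z=e^{i\theta}y$, $z^*=e^{-i\theta}y^*$ substitution) and in Theorem \ref{T4.4}. The only point worth a word of care is the bookkeeping of which variable carries the conjugation: since $W(\Phi)$ is defined via $\langle\Phi x,x^*\rangle$ with the functional acting complex-linearly, rotating $x$ by $e^{i\theta}$ multiplies the $T$-contribution by $e^{i\theta}$ and the $A$-contribution by $e^{-i\theta}$ (antilinearity), and then rotating $x^*$ by $e^{-i\theta}$ multiplies everything by a further $e^{-i\theta}$; net effect is $e^{-2i\theta}$ on the $A$-part and $1$ on the $T$-part, exactly as needed. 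I would also remark that the degenerate case $\langle Ax,x^*\rangle=0$ is included automatically.
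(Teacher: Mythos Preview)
Your proof is correct and follows exactly the same approach as the paper: rotate the pair to $(e^{i\theta}x,e^{-i\theta}x^*)\in\Pi(X)$ and compute $\langle(T+A)e^{i\theta}x,e^{-i\theta}x^*\rangle=\langle Tx,x^*\rangle+e^{-2i\theta}\langle Ax,x^*\rangle$, which sweeps out the required circle as $\theta$ varies. Your write-up is more detailed (explicitly verifying $(e^{i\theta}x,e^{-i\theta}x^*)\in\Pi(X)$, handling the degenerate case $\langle Ax,x^*\rangle=0$, and explaining the conjugation bookkeeping), but the argument is identical.
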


\begin{proof}
For each $\theta\in[0,2\pi)$, we have $(e^{i\theta}x, e^{-i\theta} x^*)\in\Pi(X)$. So
$$
\langle(T+A)e^{i\theta}x, e^{-i\theta} x^*\rangle=
\langle Tx, x^*\rangle+ e^{-2i\theta} \langle Ax,x^*\rangle\in W(T+A).
$$
So 
$$
\bigl\{z\in\CC: |z-\langle Tx,x^*\rangle|= |\langle Ax,x^*\rangle|\bigr\}\subset W(T+A).
$$
\end{proof}

We are going to prove a~much stronger statement: if $\dim X\ge 2$ and $\|x\|=\|x^*\|=1=\langle x,x^*\rangle$ then
\begin{equation}\label{ball}
W(T+A)\supset \bigl\{z\in\CC: |z-\langle Tx,x^*\rangle|\le |\langle Ax,x^*\rangle|\bigr\}.
\end{equation}

First we need a~stronger version of Theorem 3.3 in \cite{KM}. In the same time we correct a~gap in the proof of this theorem in \cite{KM}.

\begin{theorem}\label{T5.4}
Let $X$ be a~Banach space such that $\dim X\ge 2$. Let $A:X\to X$ be an antilinear operator. Then $0\in W(A)$.
More generally, if $M_0\subset X$ is a~two-dimensional subspace, then there exist $m\in M_0\vee AM_0$ and $x^*\in X^*$ such that
$\|m\|=1=\|x^*\|=\langle m,x^*\rangle$ and $\langle Am,x^*\rangle=0$.
\end{theorem}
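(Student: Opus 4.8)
The naive approach --- restrict $A$ to $M_0$ and kill the quadratic form $m\mapsto\langle Am,m\rangle$ by the fundamental theorem of algebra, as one does on a Hilbert space --- fails here: the antilinear operator $A|_{M_0}$ maps $M_0$ into $X$, not into $M_0$, and a Banach space carries no canonical norm-one projection onto $M_0$. (This is presumably the gap in \cite{KM}.) The remedy is to keep track of $N:=M_0\vee AM_0$, a finite-dimensional space with $2\le\dim_\CC N\le4$ on which $Am\in AM_0\subseteq N$ for every $m\in M_0$. It then suffices to find $m\in M_0$ with $\|m\|=1$ and $\phi\in N^*$ with $\|\phi\|=1$, $\langle m,\phi\rangle=1$ and $\langle Am,\phi\rangle=0$: extending $\phi$ to some $x^*\in X^*$ with $\|x^*\|=1$ by the Hahn--Banach theorem changes neither $\langle m,x^*\rangle$ nor $\langle Am,x^*\rangle$ (as $m,Am\in N$), and $m\in M_0\subseteq M_0\vee AM_0$. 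The assertion $0\in W(A)$ is the special case of an arbitrary two-dimensional $M_0$.

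The plan is a degree argument on the compact set
$$
P=\bigl\{(m,\phi)\in S_{M_0}\times S_{N^*}:\langle m,\phi\rangle=1\bigr\},
$$
where $S_{M_0}$, $S_{N^*}$ denote the unit spheres and $P\neq\emptyset$ by Hahn--Banach. Set $\Psi(m,\phi)=\langle Am,\phi\rangle$, a continuous complex-valued function on $P$, and observe that the circle group $\TT$ acts freely on $P$ by $\theta\cdot(m,\phi)=(e^{i\theta}m,e^{-i\theta}\phi)$ and that, by antilinearity of $A$ together with the bilinearity of the duality pairing, $\Psi(\theta\cdot(m,\phi))=e^{-2i\theta}\Psi(m,\phi)$. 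Consequently, along the $\TT$-orbit $O$ of any $(m_0,\phi_0)\in P$ the value of $\Psi$ runs twice around the circle of radius $|\Psi(m_0,\phi_0)|$ centred at the origin. We must prove that $0\in\Psi(P)$.

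The topological ingredient is the first projection $\pi\colon P\to S_{M_0}$. It is a closed continuous surjection of compact metric spaces (surjective by Hahn--Banach) all of whose fibres $\pi^{-1}(m)=\{m\}\times\{\phi\in S_{N^*}:\langle m,\phi\rangle=1\}$ are nonempty compact convex sets, hence contractible, hence acyclic. By the Vietoris--Begle mapping theorem, $\pi$ induces an isomorphism $\check H^1(S_{M_0};\mathbb Z)\cong\check H^1(P;\mathbb Z)$; since $M_0$ is two-dimensional over $\CC$, $S_{M_0}$ is homeomorphic to $S^3$, and therefore $\check H^1(P;\mathbb Z)=0$. Suppose now, towards a contradiction, that $0\notin\Psi(P)$; then $g:=\Psi/|\Psi|\colon P\to S^1$ is continuous. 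Fix $(m_0,\phi_0)\in P$ and put $c=\Psi(m_0,\phi_0)\neq0$. Because $\theta\mapsto(e^{i\theta}m_0,e^{-i\theta}\phi_0)$ is injective, $O$ is an embedded circle, and under this parametrization $g|_O$ becomes the map $\theta\mapsto e^{-2i\theta}c/|c|$ of $S^1$ onto itself, which has degree $-2$. But $g|_O$ is the composite of the inclusion $O\hookrightarrow P$ with $g$, so on first \v Cech cohomology $(g|_O)^*$ factors through $\check H^1(P;\mathbb Z)=0$ and must vanish --- contradicting the fact that a degree $-2$ self-map of $S^1$ induces multiplication by $-2$ on $\check H^1(S^1;\mathbb Z)=\mathbb Z$. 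Hence $\Psi$ vanishes at some $(m,\phi)\in P$, and the Hahn--Banach extension described above completes the proof.

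I expect the actual obstacle to be exactly this detour through \v Cech cohomology. The duality map $m\mapsto\{\phi:(m,\phi)\in P\}$ of $N$ is in general only upper semicontinuous, so there need be no continuous choice of norming functional and $P$ need not be homeomorphic to a sphere; one cannot simply reduce to the nullhomotopy of loops in a genuine $3$-sphere. Passing instead to the ``fat'' space $P$ and invoking the Vietoris--Begle theorem --- whose only hypothesis, acyclicity of the fibres, is handed to us by convexity --- is what makes the winding-number bookkeeping legitimate.
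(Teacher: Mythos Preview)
Your argument is correct and takes a genuinely different route from the paper's. Both proofs are degree arguments exploiting the fact that $\langle A(e^{i\theta}m),e^{-i\theta}\phi\rangle=e^{-2i\theta}\langle Am,\phi\rangle$, but they handle the lack of a continuous duality section in opposite ways. The paper \emph{approximates}: it invokes a result of Ambrozie--Vasilescu to obtain a continuous conjugate-homogeneous map $\widetilde J:M_1\to M_1^*$ that is nearly a duality map, runs a winding-number argument for the function $\lambda\mapsto\langle A(x+\lambda y),\widetilde J(x+\lambda y)\rangle$ on~$\CC$, and then corrects the approximate pair $(u,\widetilde J(u))$ to a genuine element of $\Pi$ via Bishop--Phelps--Bollob\'as, obtaining a contradiction with a quantitative gap $c>0$. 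You instead work directly on the exact state space $P$, absorb the multivaluedness of the duality map into the fibres of $\pi:P\to S_{M_0}$, and appeal to Vietoris--Begle to conclude $\check H^1(P;\mathbb Z)=0$; the contradiction is then purely cohomological. Your approach trades the approximation-and-correction machinery for a single nontrivial input from algebraic topology, and as a bonus it produces $m\in M_0$ rather than merely $m\in M_0\vee AM_0$ (the paper's Bishop--Phelps--Bollob\'as step moves $m$ out of $M_0$ into $M_1$). The paper's proof, on the other hand, stays within classical complex analysis (indices of plane curves) and is accessible without \v Cech cohomology.
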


\begin{proof}
Without loss of generality we may assume that $\|A\|\le 1$. Let $M_0\subset X$ be a~two-dimensional subspace and $x,y\in M_0$ linearly independent unit vectors. Let $M_1=M_0\vee AM_0$ and $M_2=M_1\vee AM_1=M_0\vee AM_0\vee A^2M_0$. Clearly $\dim M_1<\infty$ and $\dim M_2<\infty$.

Let 
$$
L=\{\langle Am,x^*\rangle: m\in M_1, x^*\in X^*, \|m\|=1=\|x^*\|=\langle m,x^*\rangle\}
$$
$$
=
\{\langle Am,m^*\rangle: m\in M_1, m^*\in M_2^*, \|m\|=1=\|m^*\|=\langle m,m^*\rangle\}.
$$
Clearly $L$ is a~closed set and $L\subset W(A)$. We show that $0\in L$. 

Suppose on the contrary that $0\notin L$. 
So there exists $c>0$ such that
$L\cap\{z\in \CC: |z|\le c\}=\emptyset$.

Let $\e$ be a~positive number, $\e< c^2/128$. By \cite{AV}, Theorem 4.11 and Remark 4.12, there exists a~continuous conjugate-homogeneous mapping $J:M_1\to M_1^{*}$ such that
$$
\|J(u)\|\le (1+\e)\|u\|
$$
and
$$
\bigl|\langle u,J(u)\rangle - \|u\|^2\bigr|\le \e\|u\|^2
$$
for all $u\in M_1$. Clearly for $u\in M_1$, $\|u\|=1$ we have
$$
\|J(u)\|\ge |\langle u,J(u)\rangle|\ge 1-\e.
$$
Define mapping $\widetilde J:M_1\to M_1^{*}$ by $\widetilde J(u)=J(u)\cdot \frac{\|u\|}{\|J(u)\|}$ for $u\ne 0$ and $\widetilde J(0)=J(0)=0$. Clearly $\widetilde J$ is continuous, conjugate homogeneous, $\|\widetilde J(u)\|=\|u\|$ and 
$\|\widetilde J(u)-J(u)\|\le \e\|u\|$ for all $u\in M_1$. 
For a~unit vector $u\in M_1$ we have
$$
\bigl|\langle u,\widetilde J(u)\rangle-1\bigr|\le
\bigl|\langle u,J(u)\rangle-1\bigr|+\|\widetilde J(u)-J(u)\|\le 2\e<\frac{c^2}{64}.
$$

By the Bishop-Phelps-Bollob\'as theorem, there exist $v\in M_1$ and $v^*\in M_1^{*}$ such that $(v,v^*)\in \Pi(M_1)$, $\|v-y\|< c/4$ and $\|v^*-\widetilde J(y)\|<c/4$. Extend $v^*$ to a~functional $x^*\in X^*$ with $\|x^*\|=\|v^*\|=1$. So
$$
c<|\langle Av,x^*\rangle|\le
|\langle Ay,x^*\rangle|+ |\langle Av-Ay,x^*\rangle|
\le
|\langle Ay,v^*\rangle|+\frac{c}{4}
$$
$$
\le
|\langle Ay,\widetilde J(y)\rangle| + |\langle Ay,v^*-\widetilde J(y)\rangle| +\frac{c}{4}
\le
|\langle Ay,\widetilde J(y)\rangle| + \frac{c}{2}.
$$
So $|\langle Ay,\widetilde J(y)\rangle| >c- c/2=c/2$.

Consider the continuous mappings $F,G:\CC\to\CC$ defined by
$$
F(\lambda)=\bigl\langle A(x+\lambda y), \widetilde J(x+\lambda y)\bigr\rangle
$$
and
$$
G(\lambda)=\bigl\langle A(\lambda y),\widetilde J(\lambda y)\bigr\rangle.
$$
Note that $G(\lambda)=\bar \lambda^2 \langle Ay,\widetilde J(y)\rangle$ and $|G(\lambda)|\ge \frac{c|\lambda|^2}{2}$ for all $\lambda\in \CC$.

We have
$$
|F(\lambda)-G(\lambda)|\le
\bigl|\langle Ax,\widetilde J(\lambda y)\rangle\bigr|+\bigl|\bigl\langle A(x+\lambda y), \widetilde J(x+\lambda y)-\widetilde J(\lambda y)\bigr\rangle\bigr|
$$
$$
\le
|\lambda|+(|\lambda|+1)\bigl\|\widetilde J(x+\lambda y)-\widetilde J(\lambda y)\bigr\|.
$$
By continuity of $\widetilde J$ and the compactness of the unit ball in $M_1$, there exists $\delta>0$ such that
$\|\widetilde J(u)-\widetilde J(v)\|< c/8$ whenever $u,v\in M_1$, $\|u\|\le 1$ and $\|u-v\|<\delta$. So for $|\lambda| $ large ($|\lambda|>\delta^{-1}$) we have
$$
\|\widetilde J(x+\lambda y)-\widetilde J(\lambda y)\|=
|\lambda|\cdot \Bigl\|\widetilde J\Bigl(\frac{x}{\lambda}+y\Bigr)-\widetilde J(y)\Bigr\|\le
\frac{|\lambda|c}{8}.
$$
So
$$
|F(\lambda)-G(\lambda)|\le |\lambda|+(|\lambda|+1)\frac{|\lambda|c}{8}\le \frac{|\lambda|^2c}{4}
$$
for all $\lambda$ with $|\lambda|$ sufficiently large.

Fix $r>0$ sufficiently large and consider continuous curves $f,g:\TT\to\CC$ defined by
$$
f(e^{i\theta})=F(re^{i\theta})
$$
and
$$
g(e^{i\theta})=G(re^{i\theta})
=
r^2e^{-2i\theta}\langle Ay,\widetilde J(y)\rangle.
$$
So $|g(e^{i\theta})|\ge\frac{cr^2}{2}$ and
$$
|g(e^{i\theta})-f(e^{i\theta})|\le\frac{cr^2}{4}.
$$
Thus the curves $f$ and $g$ are homotopic, and so $\ind_f(0)=\ind_g(0) =-2$ (for the definition of index and its basic properties see~\cite{R}, subsections 10.38 - 10.40).
So there exists $\mu\in\CC$, $|\mu|<r$ such that $F(\mu)=0$ (otherwise $f$ would be homotopic with a~constant function and so $\ind_f(0)=0$).

Let $u=\frac{x+\mu y}{\|x+\mu y\|}$. We have $u\in M_0$, $\|u\|=1$, $\|\widetilde J(u)\|= 1$ and 
$$
|\langle u,\widetilde J(u)\rangle -1|\le
|\langle u,J(u)\rangle -1|+\|\widetilde J(u)-J(u)\|\le 2\e.
$$
By the Bishop-Phelps-Bollob\'as theorem, there exist $v\in M_1$, $v^*\in M_1^*$ such that $(v,v^*)\in\Pi(M_1)$, $\|v-u\|<2\sqrt{2\e}$ and $\|v^*-\widetilde J(u)\|< 2\sqrt{2\e}$. Extend $v^*$ to a~functional $x^*\in X^*$ with $\|x^*\|=\|v^*\|=1$. We have
$$
|\langle Av,x^*\rangle| \le
|\langle Av-Au, x^*\rangle|+|\langle Au,x^*\rangle|
$$
$$
\le
\|v-u\|+
\|\langle Au, v^*-\widetilde J(u)\rangle|+|\langle Au, \widetilde J(u)\rangle|\le 4\sqrt{2\e}\le \frac{c}{2}
$$
since 
$$
\langle Au,\widetilde J(u)\rangle=\frac{1}{\|x+\mu y\|^2}\bigl\langle A(x+\mu y),\widetilde J(x+\mu y)\bigr\rangle=\frac{F(\mu)}{\|x+\mu y\|^2}=0.
$$
However, $\langle Av,x^*\rangle\in L$, a~contradiction.

\end{proof}

Now we prove (\ref{ball}) for Banach spaces with a~smooth norm.

Recall that a~norm $\|\cdot\|$ on a~Banach space $X$ is called (G\^ateaux) smooth if for every unit vector $x\in X$ there is the unique support functional $x^*\in X^*$ with $\|x^*\|=\|x\|=1=\langle x,x^*\rangle.$

If $X$ is a~separable Banach space and $\e>0$ then there exists an equivalent smooth norm $|||\cdot|||$ on $X$ such that
$$
|||x|||\le \|x\|\le (1+\e)|||x|||
$$
for all $x\in X$, see e.g. \cite{CFY}.

Note also that if $X$ is a~Banach space with a~smooth norm then the mapping $J:S_X\to S_{X^*}$ defined by $\langle x,J(x)\rangle=1$ is norm-$w^*$ continuous by the Smulian lemma, see e.g. \cite{FHHSPZ}, Lemma 8.4.

\begin{proposition}\label{smooth}
Let $X$ be a~Banach space with smooth norm $\|\cdot\|$, $\dim X\ge 2$, let $T:H\to H$ be a~linear operator and $A:H\to H$ an antilinear operator. Let $x\in X$, $x^*\in X^*$, $\|x\|=1=\|x^*\|=\langle x,x^*\rangle$. Then
$$
W(T+A)\supset \bigl\{z\in\CC: |z-\langle Tx,x^*\rangle|\le |\langle Ax,x^*\rangle|\bigr\}.
$$
Moreover, if $M_0\subset X$ is a~two-dimensional subspace containing $x$ then
$$
\bigl\{z\in\CC: |z-\langle Tx,x^*\rangle|\le |\langle Ax,x^*\rangle|\bigr\}
$$
$$
\subset
\bigl\{\langle (T+A)m,x^*\rangle: m\in M_0\vee TM_0\vee AM_0, x^*\in X^*, \|m\|=\|x^*\|=\langle m,x^*\rangle=1\bigr\}.
$$
\end{proposition}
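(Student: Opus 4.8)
The plan is to fix a target point $z_{0}$ in the closed disc $D=\{z\in\CC:|z-\langle Tx,x^{*}\rangle|\le|\langle Ax,x^{*}\rangle|\}$ and to produce a unit vector $m\in M_{0}$ (hence in $M_{0}\vee TM_{0}\vee AM_{0}$) with $\langle(T+A)m,J(m)\rangle=z_{0}$, where $J\colon S_{X}\to S_{X^{*}}$ is the duality map (well defined and norm-$w^{*}$ continuous by smoothness). I extend $J$ conjugate-homogeneously by $\widetilde J(u)=\|u\|\,J(u/\|u\|)$, $\widetilde J(0)=0$, so that $\widetilde J(\mu u)=\bar\mu\,\widetilde J(u)$, $\|\widetilde J(u)\|=\|u\|$, $\langle u,\widetilde J(u)\rangle=\|u\|^{2}$ and $\widetilde J(x)=x^{*}$. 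For each unit $v\in M_{0}$ write $D_{v}$ for the closed disc $\{z:|z-\langle Tv,J(v)\rangle|\le|\langle Av,J(v)\rangle|\}$, with boundary circle $\partial D_{v}$ and interior $D_{v}^{\circ}$; thus $D=D_{x}$. Since $(v,J(v))\in\Pi(X)$, Lemma~\ref{circle} shows $\partial D_{v}\subset W(T+A)$, realised by vectors $e^{i\theta}v\in M_{0}$; taking $v=x$ this already settles all $z_{0}\in\partial D$. So from now on assume $z_{0}\in D^{\circ}$, and split into two cases.

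\textbf{Case 1: there is a unit $v\in M_{0}$ with $z_{0}\notin D_{v}^{\circ}$.} If $z_{0}\in\partial D_{v}$ we are done by the previous paragraph. Otherwise $|z_{0}-\langle Tv,J(v)\rangle|>|\langle Av,J(v)\rangle|$, and then $v\notin\CC x$ (for $D_{e^{i\phi}x}=D_{x}$ and $z_{0}\in D_{x}^{\circ}$), so $\{x,v\}$ is a basis of $M_{0}$. Consider the continuous function $h\colon\CC\to\RR$ defined by $h(\mu)=\bigl|z_{0}-P(\mu)/\|x+\mu v\|^{2}\bigr|-|Q(\mu)|/\|x+\mu v\|^{2}$, where $P(\mu)=\langle T(x+\mu v),\widetilde J(x+\mu v)\rangle$ and $Q(\mu)=\langle A(x+\mu v),\widetilde J(x+\mu v)\rangle$ (continuity of $P,Q$ follows from norm-$w^{*}$ continuity of $\widetilde J$). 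Writing $x+\mu v=\mu(v+x/\mu)$ and using $\widetilde J(\mu u)=\bar\mu\widetilde J(u)$ one checks $P(\mu)/\|x+\mu v\|^{2}\to\langle Tv,J(v)\rangle$ and $|Q(\mu)|/\|x+\mu v\|^{2}\to|\langle Av,J(v)\rangle|$ as $|\mu|\to\infty$, so $h(\mu)\to|z_{0}-\langle Tv,J(v)\rangle|-|\langle Av,J(v)\rangle|>0$; meanwhile $h(0)=|z_{0}-\langle Tx,x^{*}\rangle|-|\langle Ax,x^{*}\rangle|<0$ since $z_{0}\in D^{\circ}$. The intermediate value theorem along a segment from $0$ gives $\mu_{*}$ with $h(\mu_{*})=0$, i.e.\ $z_{0}\in\partial D_{m}$ for $m=(x+\mu_{*}v)/\|x+\mu_{*}v\|\in M_{0}$, and Lemma~\ref{circle} concludes.

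\textbf{Case 2: $|z_{0}-\langle Tv,J(v)\rangle|<|\langle Av,J(v)\rangle|$ for every unit $v\in M_{0}$.} Put $T_{0}=T-z_{0}I$, so $|\langle T_{0}v,J(v)\rangle|<|\langle Av,J(v)\rangle|$ for all unit $v\in M_{0}$; it suffices to find a unit $m\in M_{0}$ with $\langle(T_{0}+A)m,J(m)\rangle=0$, for then $\langle(T+A)m,J(m)\rangle=z_{0}$. Here I would run the winding-number argument of Theorem~\ref{T5.4}: fix a unit $y\in M_{0}$ independent of $x$, set $F(\mu)=\langle(T_{0}+A)(x+\mu y),\widetilde J(x+\mu y)\rangle$ and $G(\mu)=\langle(T_{0}+A)(\mu y),\widetilde J(\mu y)\rangle=|\mu|^{2}\langle T_{0}y,\widetilde J(y)\rangle+\bar\mu^{2}\langle Ay,\widetilde J(y)\rangle$. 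The Case~2 hypothesis for $v=y$ yields $|\langle T_{0}y,\widetilde J(y)\rangle|<|\langle Ay,\widetilde J(y)\rangle|$, so on each circle $|\mu|=r$ the curve $\theta\mapsto G(re^{i\theta})$ avoids $0$ and has index $-2$. Mimicking the estimate in the proof of Theorem~\ref{T5.4} — and noting that $F(\mu)-G(\mu)$ pairs $\widetilde J$ only against the \emph{fixed} finite-dimensional subspace $M_{0}\vee T_{0}M_{0}\vee AM_{0}$, on whose ambient dual restrictions $w^{*}$-convergence is norm convergence — one gets $|F(\mu)-G(\mu)|=o(|\mu|^{2})$ uniformly on $|\mu|=r$, so for $r$ large $|F-G|<|G|$ there; hence $F$ and $G$ are homotopic in $\CC\setminus\{0\}$ on $|\mu|=r$, the index of $\theta\mapsto F(re^{i\theta})$ around $0$ is $-2\neq0$, and $F$ has a zero $\mu_{0}$ with $|\mu_{0}|<r$. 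Then $m=(x+\mu_{0}y)/\|x+\mu_{0}y\|\in M_{0}$ satisfies $\langle(T_{0}+A)m,J(m)\rangle=F(\mu_{0})/\|x+\mu_{0}y\|^{2}=0$. The two cases exhaust all $z_{0}\in D^{\circ}$, which proves the "moreover" inclusion; the first, unrestricted inclusion $W(T+A)\supset D$ then follows because $\dim X\ge2$ provides a two-dimensional $M_{0}\ni x$.

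I expect the main obstacle to be making Case~2 rigorous. The index/homotopy machinery of Theorem~\ref{T5.4} was set up with a \emph{finite-dimensional} (Asplund) duality map, whereas here $\widetilde J$ takes values in the full dual $X^{*}$, which need not be reflexive or smooth; the resolution is that every pairing $\langle\cdot,\widetilde J(\cdot)\rangle$ occurring in $F-G$ is against a vector of the fixed finite-dimensional subspace $M_{0}\vee T_{0}M_{0}\vee AM_{0}$, so norm-$w^{*}$ continuity of $\widetilde J$ together with finite-dimensionality recovers the uniform continuity needed for the $o(|\mu|^{2})$ bound. A smaller point to be careful about is the case split itself — that Cases 1 and 2 genuinely cover every $z_{0}\in D^{\circ}$, and that in each case the witnessing unit vector can be taken inside $M_{0}$ rather than merely in $X$, which is exactly what delivers the stronger form of the statement.
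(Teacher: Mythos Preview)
Your argument is correct and takes a genuinely different route from the paper's. The paper first invokes Theorem~\ref{T5.4} to obtain a vector $y\in M_1=M_0\vee TM_0\vee AM_0$ with $\langle Ay,J(y)\rangle=0$, connects $x$ to $y$ by a continuous path $f$ in $S_{M_1}$, and then observes that the closed curves $\theta\mapsto\langle(T+A)e^{i\theta}f(t),e^{-i\theta}J(f(t))\rangle$ interpolate between a curve of index $-2$ around $z_0$ (at $t=0$) and a constant (at $t=1$); since all of these lie in $L$, the assumption $z_0\notin L$ is contradicted. You never cite Theorem~\ref{T5.4} as a black box: Case~1 is a clean intermediate-value step, and Case~2 rebuilds a winding-number argument directly for $T_0+A$ on the affine family $x+\mu y$, with the Case~2 hypothesis itself supplying the inequality $|\langle T_0y,J(y)\rangle|<|\langle Ay,J(y)\rangle|$ that makes $\mathrm{ind}_G(0)=-2$. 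The payoff is that your witnessing unit vectors always lie in $M_0$, not merely in $M_1$, which is sharper than what the proposition claims; the cost is having to justify the $o(|\mu|^2)$ estimate with the full-space (only norm-$w^*$ continuous) duality map --- your observation that every pairing occurring in $F-G$ is against a \emph{fixed} vector $T_0y$ or $Ay$ is exactly what makes this go through, while the paper sidesteps the issue entirely by restricting to the finite-dimensional block $M_2$ and using its norm-norm continuous $J\colon S_{M_2}\to S_{M_2^*}$.
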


\begin{proof}
Let $M_0\subset X$ be a~two-dimensional subspace containing $x$.
Let $M_1=M_0\vee TM_0\vee AM_0$ and $M_2=M_1\vee TM_1\vee AM_1$.
Let 
$$
L=\bigl\{\langle (T+A)m,x^*\rangle: m\in M_1: x^*\in X^*, \|m\|=\|x^*\|=\langle m,x^*\rangle=1\bigr\}.
$$
Clearly $L\subset W(T+A)$. 
If $|z-\langle Tx,x^*\rangle|=|\langle Ax,x^*\rangle|$ then $z\in L$ as in Lemma \ref{circle}.
Let $z_0\in\CC$,  $|z_0-\langle Tx,x^*\rangle|< |\langle Ax,x^*\rangle|$. Suppose on the contrary that $z_0\notin L$.

By Theorem \ref{T5.4}, there exist $y\in M_1$ and $y^*\in X^*$ such that $(y,y^*)\in \Pi(X)$ and $\langle Ay,y^*\rangle=0$.

Denote by $S_{M_2}$ the unit sphere in $M_2$. 

Find a~continuous curve $f:[0,1]\to S_{M_1}$ with $f(0)=x$ and $f(1)=y$.

Let $J:S_{M_2}\to S_{M_2^*}$ satisfy $(u,J(u))\in\Pi(M_2)$ for all $u\in S_{M_2}$.
Then $J$ is continuous.

For all $t\in[0,1]$ and $\theta\in[0,2\pi)$ we have $\bigl\langle (T+A)e^{i\theta}f(t),e^{-i\theta}J(f(t))\bigr\rangle\in L\subset W(T+A)$. 

The curves $u_0, u_1:[0,2\pi)\to\CC$ defined by
$$
u_0(\theta)= \bigl\langle (T+A)e^{i\theta}f(0),e^{-i\theta}J(f(0))\bigr\rangle
=\langle Tx,x^*\rangle +e^{-2i\theta}\langle Ax,x^*\rangle
$$
and
$$
u_1(\theta) =\bigl\langle (T+A)e^{i\theta}f(1),e^{-i\theta}J(f(1))\bigr\rangle=
\langle Ty,y^*\rangle
$$
are homotopic. However,
${\rm Ind}_{u_0}(z_0)=-2$ and ${\rm Ind}_{u_1}(z_0)=0$, a~contradiction.
So $z_0\in L\subset W(T+A)$.
\end{proof}

In fact, the assumption that the norm is smooth in the previous theorem can be omitted.

\begin{theorem}\label{T5.6}
Let $X$ be an at least two-dimensional Banach space, let $T:X\to X$ be a~linear operator and $A:X\to X$ an antilinear operator. Let $x\in X$, $x^*\in X^*$ and $\|x\|=\|x^*\|=\langle x,x^*\rangle=1$. Then
$$
W(T+A)\supset \bigl\{z\in\CC: |z-\langle Tx,x^*\rangle|\le |\langle Ax,x^*\rangle|\bigr\}.
$$
\end{theorem}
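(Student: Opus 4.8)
The plan is to deduce the theorem from the smooth case already established in Proposition~\ref{smooth}, by a renorming argument; the one genuinely delicate point is that the renorming must be compatible with the prescribed norming pair $(x,x^*)$.

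First I would reduce to the case that $X$ is separable. There is a separable closed subspace $X_0\subseteq X$ with $x\in X_0$, $\dim X_0\ge 2$, $TX_0\subseteq X_0$ and $AX_0\subseteq X_0$: take the closed linear span of the orbits, under all finite words in $T$ and $A$, of $x$ and of some vector $v\notin\CC x$. The restriction of $x^*$ to $X_0$ is still a norming functional for $x$, and since $Tx,Ax\in X_0$ the disc $\{z:|z-\langle Tx,x^*\rangle|\le|\langle Ax,x^*\rangle|\}$ is the same whether computed in $X$ or in $X_0$. Moreover, extending a functional by Hahn--Banach does not change its action on $X_0$, so every pair in $\Pi(X_0)$ yields a pair in $\Pi(X)$ realizing the same value of $\langle(T+A)\,\cdot\,,\,\cdot\,\rangle$; hence $W(T|_{X_0}+A|_{X_0})\subseteq W(T+A)$. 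Thus it suffices to treat $X_0$, and we may assume $X$ separable.

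Second, I would prove the following renorming lemma: \emph{if $X$ is separable, $(x,x^*)\in\Pi(X)$ and $\e>0$, then there is an equivalent smooth norm $|||\cdot|||$ on $X$ with $|||\cdot|||\le\|\cdot\|\le(1+\e)|||\cdot|||$, $|||x|||=1$ and $|||x^*|||_*=1$.} The idea is to find a smooth convex body $C$ with $B_X\subseteq C\subseteq(1+\e)B_X\cap\{y:|\langle y,x^*\rangle|\le 1\}$. Since $x$ lies on the boundary of the right-hand body while being interior to $(1+\e)B_X$, any such $C$ is forced to pass through $x$ with supporting hyperplane $\{\langle y,x^*\rangle=1\}$, so $x$ becomes a smooth point of $|||\cdot|||$ whose support functional is $x^*$; this is exactly the conclusion $|||x|||=|||x^*|||_*=1$. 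Producing $C$ --- smooth, trapped between the two convex bodies, and tangent to the prescribed hyperplane at $x$ --- by the smooth-renorming machinery available in separable spaces is the step I expect to be the main obstacle.

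Granting the lemma, fix $z_0$ with $|z_0-\langle Tx,x^*\rangle|\le|\langle Ax,x^*\rangle|$, choose a two-dimensional subspace $M_0\ni x$, and put $M_1=M_0\vee TM_0\vee AM_0$, a finite-dimensional subspace of $X$ that does not depend on the norm. For each $k$ let $|||\cdot|||_k$ be the smooth norm given by the renorming lemma with $\e=1/k$. Since $(x,x^*)$ is a norming pair for $|||\cdot|||_k$ and the numbers $\langle Tx,x^*\rangle$ and $\langle Ax,x^*\rangle$ are independent of the norm, Proposition~\ref{smooth} applied to $(X,|||\cdot|||_k)$ produces $m_k\in M_1$ and $n_k^*\in X^*$ with $|||m_k|||_k=|||n_k^*|||_{k,*}=\langle m_k,n_k^*\rangle=1$ and $\langle(T+A)m_k,n_k^*\rangle=z_0$. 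The norm comparisons give $1\le\|m_k\|\le 1+1/k$ and $(1+1/k)^{-1}\le\|n_k^*\|_*\le 1$. Using compactness of bounded sets in the finite-dimensional space $M_1$ together with $w^*$-sequential compactness of $B_{X^*}$ (this is where separability is used), pass to a subsequence along which $m_k\to m_\infty$ in norm and $n_k^*\to n_\infty^*$ in the $w^*$-topology. Then $\|m_\infty\|=1$; boundedness of $(n_k^*)$ together with the strong convergence of $m_k$ and the $w^*$-convergence of $n_k^*$ gives $\langle m_\infty,n_\infty^*\rangle=\lim_k\langle m_k,n_k^*\rangle=1$, and with the $w^*$-lower semicontinuity of the dual norm this forces $\|n_\infty^*\|_*=1$; hence $(m_\infty,n_\infty^*)\in\Pi(X)$. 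The same passage to the limit gives $\langle(T+A)m_\infty,n_\infty^*\rangle=\lim_k\langle(T+A)m_k,n_k^*\rangle=z_0$. Therefore $z_0\in W(T+A)$, and as $z_0$ was arbitrary in the disc the theorem follows. What makes this argument deliver $z_0$ itself, rather than merely a point of $\overline{W(T+A)}$, is that all the vectors $m_k$ lie in the \emph{fixed} finite-dimensional subspace $M_1$, so the limit pair $(m_\infty,n_\infty^*)$ is an honest element of $\Pi(X)$.
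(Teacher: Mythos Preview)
Your overall strategy---reduce to separable $X$, renorm smoothly, apply Proposition~\ref{smooth}, extract a limit using finite-dimensional compactness---matches the paper's. The gap is precisely the renorming lemma you single out as ``the main obstacle'': a smooth equivalent norm with $|||x|||=|||x^*|||_*=1$ amounts to a smooth body $C$ with $B_X\subseteq C\subseteq (1+\varepsilon)B_X\cap\{|\langle\,\cdot\,,x^*\rangle|\le1\}$, and since $B_X$ already meets the boundary of the right-hand body at $x$, this is a tangency constraint that the standard smooth-renorming results do not deliver off the shelf. You do not prove it, and neither does the paper.

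The paper sidesteps this lemma with Bishop--Phelps--Bollob\'as. It takes \emph{any} sequence of smooth norms $|||\cdot|||_n\to\|\cdot\|$, normalizes $x_n=x/|||x|||_n$ and $x_n^*=x^*/|||x^*|||_n$ (so $\langle x_n,x_n^*\rangle\to1$, but the pair need not be norming for $|||\cdot|||_n$), and then applies BPB inside $M_1$ to produce genuine norming pairs $(u_n,u_n^*)$ for $|||\cdot|||_n$ with $u_n\to x$ and $u_n^*|_{M_1}\to x^*|_{M_1}$. This gives $\langle Tu_n,u_n^*\rangle\to\langle Tx,x^*\rangle$ and $\langle Au_n,u_n^*\rangle\to\langle Ax,x^*\rangle$, so for large $n$ the point $z_0$ lies strictly inside the disc associated with $(u_n,u_n^*)$, and Proposition~\ref{smooth} yields $v_n\in M_2=M_1\vee TM_1\vee AM_1$ with $\langle(T+A)v_n,v_n^*\rangle=z_0$; the limit argument is then essentially yours. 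Thus BPB is the missing ingredient that replaces your unproved lemma. A minor variation: the paper restricts the functionals to a fixed finite-dimensional $M_3$ and takes limits there, rather than invoking $w^*$-sequential compactness of $B_{X^*}$; your route through $w^*$-compactness also works.
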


\begin{proof}
Without loss of generality, we may assume that $X$ is separable. In fact, let $X_0$ be the smallest subspace of $X$ that contains $x$ and is invariant for $T$ and $A$. Clearly $X_0$ is separable and $W((T+A)|X_0)\subset W(T+A)$.

If $|z-\langle Tx,x^*\rangle|= |\langle Ax,x^*\rangle|$ then $z\in W(T+A)$ by Lemma \ref{circle}.
Let $z_0\in\CC$ satisfy
$|z_0-\langle Tx,x^*\rangle|< |\langle Ax,x^*\rangle|$.

Let $M_0\subset X$ be a~two-dimensional subspace containing $x$, let $M_1=M_0\vee TM_0\vee AM_0$, $M_2=M_1\vee TM_1\vee AM_1$ and $M_3=M_2\vee TM_2\vee AM_2$.

Let $(\e_n)$ be a~sequence of positive numbers with $\e_n\to 0$. There exist equivalent smooth norms $|||\cdot|||_n$ on $X$ such that
$$
|||u|||_n\le \|u\|\le (1+\e_n)|||u|||_n
$$
for all $u\in X$.
Denote by the same symbol $|||\cdot|||_n$ the dual norms on $X^*$ defined by
$$
|||u^*|||_n=\sup\{|\langle u,u^*\rangle|:u\in X, |||u|||_n\le 1\}.
$$
Let $x_n=\frac{x}{|||x|||_n}$ and $x_n^*=\frac{x^*}{|||x^*|||_n}$. Clearly $x_n\to x$ and $x_n^*\to x^*$.
So $\langle x_n,x_n^*\rangle\to \langle x,x^*\rangle=1$.

By the Bishop-Phelps-Bollob\'as theorem, there exist vectors $u_n\in M_1$ and functionals $u_n^*\in X^*$ with $|||u_n|||_n=|||u_n^*|||_n=1=\langle u_n,u_n^*\rangle$, $|||u_n-x_n|||_n\to 0$ and $|||(u_n^*-x_n^*)|M_1|||_n\to 0$. So $u_n\to x$ and $u_n^*|M_1\to x^*|M_1$.
We have $Tu_n\to Tx\in M_1$ and 
$$
\lim_{n\to\infty}\langle Tu_n,u_n^*\rangle=
\lim_{n\to\infty}\langle Tx,u_n^*\rangle=
\lim_{n\to\infty}\langle Tx, x_n^*\rangle=\langle Tx,x^*\rangle.
$$
Similarly
$\langle Au_n,u_n^*\rangle\to\langle Ax,x^*\rangle$.
So for all $n$ large enough we have
$$
|z_0-\langle Tu_n,u_n^*\rangle|< |\langle Au_n,u_n^*\rangle|.
$$
By Proposition \ref{smooth}, there exist vectors $v_n\in M_2$ and $v_n^*\in X^*$ such that $|||v_n|||_n=1=|||v_n^*|||=\langle v_n,v_n^*\rangle$ such that $\langle (T+A)v_n,v_n^*\rangle=z_0$.
Without loss of generality, we may assume that the sequence $(v_n)$ is convergent, $v_n\to v$. Clearly $v\in M_2, \|v\|=1$.
Similarly, we may assume that
$v_n^*|M_3\to v^*$ for some $v^*\in M_3^*$. So
$$
\|v^*\|=\lim_{n\to\infty}\|v_n^*|M_3\|\le 1
$$
and
$$
\langle v,v^*\rangle=
\lim_{n\to\infty}\langle v, v_n^*\rangle=
\lim_{n\to\infty}\langle v_n,v_n^*\rangle=1.
$$
So $\|v^*\|=1$ and
$$
\langle (T+A)v,v^*\rangle=
\lim_{n\to\infty}\langle (T+A)v,v_n^*\rangle=
\lim_{n\to\infty}\langle (T+A)v_n,v_n^*\rangle=
z_0\in W(T+A).
$$
\end{proof}

\begin{corollary}\label{C2}
Let $\dim X\ge 2$, let $T:X\to X$ be a~linear operator and $A:X\to X$ an antilinear operator. Then
$$
W(T+A)=
\bigcup_{(x,x^*)\in\Pi(X)}
\bigl\{z\in\CC: |z-\langle Tx,x^*\rangle|\le |\langle Ax,x^*\rangle|\bigr\}.
$$
\end{corollary}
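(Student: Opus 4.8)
The plan is to prove the two inclusions separately, and both should follow almost immediately from the results already established, so that the real work has already been done in Theorem~\ref{T5.6}.

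For the inclusion $\supseteq$, I would fix an arbitrary pair $(x,x^*)\in\Pi(X)$ and simply invoke Theorem~\ref{T5.6}, which asserts exactly that the closed disc $\{z\in\CC:|z-\langle Tx,x^*\rangle|\le|\langle Ax,x^*\rangle|\}$ is contained in $W(T+A)$. Since this holds for every such pair, the union over all $(x,x^*)\in\Pi(X)$ is still contained in $W(T+A)$.

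For the inclusion $\subseteq$, I would take an arbitrary $z\in W(T+A)$. By the definition of the numerical range there is a pair $(x,x^*)\in\Pi(X)$ with $z=\langle(T+A)x,x^*\rangle=\langle Tx,x^*\rangle+\langle Ax,x^*\rangle$. Then $|z-\langle Tx,x^*\rangle|=|\langle Ax,x^*\rangle|$, so in particular $z$ lies in the closed disc associated with that pair $(x,x^*)$, hence in the union on the right-hand side. Combining the two inclusions gives the claimed equality.

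There is essentially no obstacle here: the nontrivial content — that each such disc is entirely filled by values of the numerical range, not just its boundary circle as in Lemma~\ref{circle} — is precisely Theorem~\ref{T5.6}, which in turn rests on the corrected version of the antilinear result (Theorem~\ref{T5.4}) together with the smoothing and index arguments in Proposition~\ref{smooth}. Once those are in hand, Corollary~\ref{C2} is just a repackaging: the $\supseteq$ direction is Theorem~\ref{T5.6} quantified over $\Pi(X)$, and the $\subseteq$ direction is the trivial observation that any point of $W(T+A)$ sits on the boundary circle of one of these discs.
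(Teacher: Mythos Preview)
Your proposal is correct and matches the paper's treatment: the paper states Corollary~\ref{C2} without proof, treating it as an immediate consequence of Theorem~\ref{T5.6} (for $\supseteq$) together with the definition of $W(T+A)$ (for $\subseteq$), exactly as you argue. There is nothing to add.
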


\begin{corollary}
Let $\dim X\ge 2$, let $T:X\to X$ be a~linear operator and $A:X\to X$ an antilinear operator, let $\alpha,\beta\in\CC$, $|\alpha|\le|\beta|$. Then $W(T+\alpha A)\subset W(T+\beta A)$. In particular, $W(T)\subset W(T+A)$.
\end{corollary}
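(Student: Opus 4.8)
The plan is to read this off directly from Corollary~\ref{C2}. First I would note that for any $\alpha\in\CC$ the operator $\alpha A$ is again antilinear, so Corollary~\ref{C2} applies to the real linear operator $T+\alpha A$ and yields
$$
W(T+\alpha A)=\bigcup_{(x,x^*)\in\Pi(X)}\bigl\{z\in\CC:\ |z-\langle Tx,x^*\rangle|\le|\langle(\alpha A)x,x^*\rangle|\bigr\}.
$$
Since $\langle(\alpha A)x,x^*\rangle=\alpha\langle Ax,x^*\rangle$, the disc attached to a pair $(x,x^*)$ is centred at $\langle Tx,x^*\rangle$ with radius $|\alpha|\cdot|\langle Ax,x^*\rangle|$, and similarly the disc attached to $(x,x^*)$ in the corresponding formula for $W(T+\beta A)$ is centred at the same point with radius $|\beta|\cdot|\langle Ax,x^*\rangle|$.

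Next, since $|\alpha|\le|\beta|$, for every $(x,x^*)\in\Pi(X)$ the first disc is contained in the second (same centre, radius no larger). Taking the union over all $(x,x^*)\in\Pi(X)$ preserves this inclusion, which gives $W(T+\alpha A)\subset W(T+\beta A)$.

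Finally, for the last assertion I would specialise to $\alpha=0$ and $\beta=1$: then $T+\alpha A=T$, and in the formula above each disc degenerates to the single point $\langle Tx,x^*\rangle$, so the union over $\Pi(X)$ is precisely $W(T)$ by the definition of the numerical range; hence $W(T)\subset W(T+A)$. The only hypothesis that needs to be in force is $\dim X\ge 2$, which is exactly what is assumed and what Corollary~\ref{C2} requires; beyond invoking that corollary there is essentially no obstacle, the statement being an immediate consequence of the structural description of $W(T+A)$ as a union of discs.
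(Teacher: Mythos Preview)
Your proof is correct and follows essentially the same approach as the paper: both invoke Corollary~\ref{C2} to write $W(T+\alpha A)$ and $W(T+\beta A)$ as unions over $\Pi(X)$ of discs centred at $\langle Tx,x^*\rangle$ with radii $|\alpha|\cdot|\langle Ax,x^*\rangle|$ and $|\beta|\cdot|\langle Ax,x^*\rangle|$, and then compare discwise. Your version simply spells out a few more details (that $\alpha A$ is antilinear and that the $\alpha=0$ case recovers the definition of $W(T)$), but the argument is the same.
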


\begin{proof}
By Corollary~\ref{C2},
\begin{align*}
W(T+\alpha A)=
&\bigcup_{(x,x^*)\in\Pi(X)}
\bigl\{z\in\CC: |z-\langle Tx,x^*\rangle|\le |\alpha|\cdot|\langle Ax,x^*\rangle|\bigr\}
\\
\subset
&\bigcup_{(x,x^*)\in\Pi(X)}
\bigl\{z\in\CC: |z-\langle Tx,x^*\rangle|\le |\beta|\cdot|\langle Ax,x^*\rangle|\bigr\}
=W(T+\beta A).
\end{align*}
In particular, $W(T)\subset W(T+A)$.
\end{proof}

\section{Numerical range for real linear operators on Hilbert spaces}

In the following $H$ will be a~complex Hilbert space. 
As usually, we identify the dual $H^*$ with $H$. 
This Hilbert space convention causes small changes in some statements. For example, for real linear operators on Hilbert spaces we have
$$
\sigma(\Phi^*)=\{\bar z: z\in\sigma(\Phi)\}.
$$
Similarly,
$$
\sigma_{ap}(\Phi^*)=\{\bar z: z\in\sigma_{sur}(\Phi)\}
$$
and
$$
\sigma_{sur}(\Phi^*)=\{\bar z: z\in\sigma_{ap}(\Phi)\}.
$$

The numerical range of a~real linear operator is defined in the following way.

\begin{definition}
Let $\Phi:H\to H$ be a~real linear operator on a~Hilbert space $H$. Define its numerical range $W(\Phi)$ by
$$
W(\Phi)=\{\langle \Phi x,x\rangle: x\in H, \|x\|=1\}
$$
and the numerical radius $w(\Phi)$ by
$$
w(\phi)=\sup\{|\lambda|: \lambda\in W(\Phi)\}
=\sup\{|\langle \Phi x,x\rangle|: x\in X, \|x\|=1\}.
$$
\end{definition}

The following result is a~consequence of Theorem \ref{T5.6} for Banach spaces. However, the proof for real operators on Hilbert spaces is much simpler.

\begin{theorem}\label{L1}
Let $\dim H\ge 2$, let $T:H\to H$ be a~linear operator and $A:H\to H$ an antilinear operator. Let $x\in H$, $\|x\|=1$. Then
$$
W(T+A)\supset \bigl\{z\in\CC: |z-\langle Tx,x\rangle|\le |\langle Ax,x\rangle|\bigr\}.
$$
\end{theorem}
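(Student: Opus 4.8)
The plan is to fill in the disc by combining the ``rotation circle'' of Lemma~\ref{circle} with an elementary winding-number argument; on a Hilbert space the rotation $u\mapsto e^{i\theta}u$ and the self-duality $u\leftrightarrow u$ make everything explicit, so no renorming or Bishop--Phelps--Bollob\'as input is needed.

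First I would treat the trivial case $\langle Ax,x\rangle=0$: there the right-hand set is the single point $\langle Tx,x\rangle=\langle(T+A)x,x\rangle\in W(T+A)$. So assume $\langle Ax,x\rangle\neq 0$, fix $z_0$ with $|z_0-\langle Tx,x\rangle|<|\langle Ax,x\rangle|$, and argue by contradiction assuming $z_0\notin W(T+A)$. The basic mechanism is that for any unit vector $u\in H$,
$$
\bigl\langle(T+A)(e^{i\theta}u),e^{i\theta}u\bigr\rangle=\langle Tu,u\rangle+e^{-2i\theta}\langle Au,u\rangle\in W(T+A)\qquad(\theta\in[0,2\pi]),
$$
since $A$ is antilinear; in particular the full circle centred at $\langle Tu,u\rangle$ of radius $|\langle Au,u\rangle|$ lies in $W(T+A)$, which is the Hilbert-space instance of Lemma~\ref{circle}.

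Next I would produce a unit vector $y\in H$ with $\langle Ay,y\rangle=0$. This is exactly ``$0\in W(A)$'', which holds because $\dim H\ge2$: one may quote Theorem~\ref{T5.4}, or argue directly by picking orthonormal $e_1,e_2$ and observing that $\zeta\mapsto\langle A(e_1+\zeta e_2),e_1+\zeta e_2\rangle$ has the form $\alpha+\bar\zeta\beta+\bar\zeta^{2}\gamma$, a complex quadratic in $\bar\zeta$ which has a root (and if $\gamma=\langle Ae_2,e_2\rangle=0$ one takes $y=e_2$). Since $\dim_{\CC}H\ge2$ the unit sphere $S_H$ is path-connected, so I can fix a continuous path $f:[0,1]\to S_H$ with $f(0)=x$, $f(1)=y$. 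Because $T$ and $A$ are bounded, the map
$$
h(t,\theta)=\langle Tf(t),f(t)\rangle+e^{-2i\theta}\langle Af(t),f(t)\rangle\qquad(t\in[0,1],\ \theta\in[0,2\pi])
$$
is continuous, and for each fixed $t$ the loop $\theta\mapsto h(t,\theta)$ lies in $W(T+A)$, hence in $\CC\setminus\{z_0\}$. Thus $h$ is a homotopy in $\CC\setminus\{z_0\}$ from the loop $\theta\mapsto\langle Tx,x\rangle+e^{-2i\theta}\langle Ax,x\rangle$, which has winding number $-2$ about $z_0$ (as $z_0$ lies strictly inside the circle of radius $|\langle Ax,x\rangle|$ about $\langle Tx,x\rangle$), to the constant loop $\langle Ty,y\rangle=\langle(T+A)y,y\rangle$, which has winding number $0$ about $z_0$ (note $z_0\neq\langle Ty,y\rangle$ since the latter lies in $W(T+A)$). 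Winding number is a homotopy invariant, so $-2=0$, a contradiction; hence $z_0\in W(T+A)$.

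I expect the only genuinely delicate point to be the topological bookkeeping: verifying that the end loops have indices $-2$ and $0$ and that $h$ really avoids $z_0$ (the latter being immediate from the contradiction hypothesis). The construction of $y$, the connectedness of $S_H$, and the continuity of $h$ are all routine; the whole argument is substantially shorter than Proposition~\ref{smooth} precisely because the Hilbert inner product furnishes a continuous duality map for free.
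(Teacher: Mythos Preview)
Your proposal is correct and follows essentially the same route as the paper: handle the boundary circle via the rotation identity, quote $0\in W(A)$ (the paper cites \cite{KM}, you also sketch the direct quadratic-in-$\bar\zeta$ argument), connect $x$ to such a $y$ by a path in the unit sphere, and obtain a contradiction from the winding numbers $-2$ versus $0$ of the homotopic loops. The only cosmetic difference is that the paper states the boundary-circle case separately before setting up the contradiction, whereas you fold it into the rotation observation; your remark that no renorming or Bishop--Phelps--Bollob\'as input is needed is exactly the simplification the paper alludes to.
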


\begin{proof}
The statement is clear if $\langle Ax,x\rangle=0$.

Suppose that $\langle Ax,x\rangle\ne 0$. By \cite{KM}, there exists a~unit vector $y\in H$ with $\langle Ay,y\rangle=0$.
Find a~continuous curve $f:[0,1]\to \{u\in H: \|u\|=1\}$ with $f(0)=x$ and $f(1)=y$.

For every $\theta\in[0,2\pi)$ we have
$$
\langle Tx,x\rangle +e^{i\theta}\langle Ax,x\rangle=
\bigl\langle (T+A)e^{-i\theta/2}x,e^{-i\theta/2}x\bigr\rangle \in W(T+A).
$$
So if $z\in\CC$, $|z-\langle Tx,x\rangle|=|\langle Ax,x\rangle|$, then $z\in W(T+A)$. 

Let $z_0\in\CC$, $|z_0-\langle Tx,x\rangle|<|\langle Ax,x\rangle|$
Suppose on the contrary that $z_0\notin W(T+A)$.
Let
$$
M=\bigl\{\langle (T+A)e^{i\theta}f(t),e^{i\theta}f(t)\rangle: t\in[0,1],0\le\theta<2\pi\bigr\}.
$$
Clearly $M\subset W(T+A)$. So $z_0\notin M$. 

The curves $u_0, u_1:[0,2\pi)\to\CC$ defined by
$$
u_0(\theta)= \bigl\langle (T+A)e^{i\theta}f(0),e^{i\theta}f(0)\bigr\rangle
=\langle Tx,x\rangle +e^{-2i\theta}\langle Ax,x\rangle
$$
and
$$
u_1(\theta) =\bigl\langle (T+A)e^{i\theta}f(1),e^{i\theta}f(1)\bigr\rangle=
\langle Ty,y\rangle
$$
are homotopic. However,
${\rm Ind}_{u_0}(z_0)=-2$ and ${\rm Ind}_{u_1}(z_0)=0$, a~contradiction.
So $z_0\in M\subset W(T+A)$.
\end{proof}

As in the Banach space setting, this implies the following corollary.

\begin{corollary}\label{C4}
Let $\dim H\ge 2$, let $T:H\to H$ be a~linear operator and $A:H\to H$ an antilinear operator. Then:
\begin{itemize}
\item[(i)]
$$
W(T+A)=
\bigcup_{x\in X\atop\|x\|=1}
\bigl\{z\in\CC: |z-\langle Tx,x\rangle|\le |\langle Ax,x\rangle|\bigr\}.
$$
\item[(ii)]
if $\alpha,\beta\in\CC$, $|\alpha|\le|\beta|$, then $W(T+\alpha A)\subset W(T+\beta A)$. In particular, $W(T)\subset W(T+A)$.
\end{itemize}
\end{corollary}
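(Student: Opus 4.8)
The plan is to derive both parts directly from Theorem~\ref{L1}, which already supplies the only nontrivial ingredient; what remains is a routine set-theoretic observation about nested families of discs.

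For part (i), the inclusion $\supseteq$ is immediate: for each unit vector $x\in H$, Theorem~\ref{L1} gives $\{z\in\CC: |z-\langle Tx,x\rangle|\le|\langle Ax,x\rangle|\}\subseteq W(T+A)$, and a union of subsets is a subset. For the reverse inclusion $\subseteq$, take $\lambda\in W(T+A)$; then $\lambda=\langle (T+A)x,x\rangle=\langle Tx,x\rangle+\langle Ax,x\rangle$ for some unit vector $x$, so $|\lambda-\langle Tx,x\rangle|=|\langle Ax,x\rangle|$, which puts $\lambda$ on the bounding circle of the $x$-disc, hence in the union. This proves (i).

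For part (ii), I would first record that a scalar multiple $\alpha A$ of an antilinear operator is again antilinear and satisfies $\langle(\alpha A)x,x\rangle=\alpha\langle Ax,x\rangle$, so $|\langle(\alpha A)x,x\rangle|=|\alpha|\cdot|\langle Ax,x\rangle|$. Applying part (i) (legitimate since $\dim H\ge2$) to the pair $(T,\alpha A)$ and to the pair $(T,\beta A)$ yields
$$
W(T+\alpha A)=\bigcup_{\|x\|=1}\bigl\{z: |z-\langle Tx,x\rangle|\le|\alpha|\cdot|\langle Ax,x\rangle|\bigr\},\qquad
W(T+\beta A)=\bigcup_{\|x\|=1}\bigl\{z: |z-\langle Tx,x\rangle|\le|\beta|\cdot|\langle Ax,x\rangle|\bigr\}.
$$
Since $|\alpha|\le|\beta|$, for each fixed $x$ the first disc has the same center and no larger radius than the second, so it is contained in it; taking the union over $x$ gives $W(T+\alpha A)\subseteq W(T+\beta A)$. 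The special case $W(T)\subseteq W(T+A)$ is $\alpha=0,\ \beta=1$, using that $W(T+0\cdot A)=W(T)$ because each degenerate disc of radius $0$ is the single point $\langle Tx,x\rangle$.

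There is essentially no obstacle: the geometric work is entirely inside Theorem~\ref{L1}. The only point deserving a moment's care is verifying that $\alpha A$ behaves correctly (antilinearity, and $|\langle(\alpha A)x,x\rangle|=|\alpha|\cdot|\langle Ax,x\rangle|$), so that part (i) may be reused with $\alpha A$ in place of $A$; everything else is elementary monotonicity of unions of discs.
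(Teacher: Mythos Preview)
Your proof is correct and follows essentially the same approach as the paper: part~(i) is declared ``clear'' in the paper (you simply spell out both inclusions), and for part~(ii) the paper also applies the disc representation from~(i) to $T+\alpha A$ and $T+\beta A$ and compares the radii termwise. Your added remarks on why $\alpha A$ is antilinear and why $|\langle(\alpha A)x,x\rangle|=|\alpha|\,|\langle Ax,x\rangle|$ are the only elaborations beyond the paper's terse version.
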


\begin{proof}
(i) is clear.
\smallskip

(ii) 
By the previous lemma,
\begin{align*}
W(T+\alpha A)=
&\bigcup_{x\in X\atop\|x\|=1}
\bigl\{z\in\CC: |z-\langle Tx,x\rangle|\le |\alpha|\cdot|\langle Ax,x\rangle|\bigr\}
\\
\subset
&\bigcup_{x\in X\atop\|x\|=1}
\bigl\{z\in\CC: |z-\langle Tx,x\rangle|\le |\beta|\cdot|\langle Ax,x\rangle|\bigr\}
=W(T+\beta A).
\end{align*}
In particular, $W(T)\subset W(T+A)$.
\end{proof}

By the classical result of Hausdorff and Toeplitz, the numerical range of a~linear operator is always a~convex subset of the complex plane. By \cite{KM}, the numerical range of an antilinear operator on a~Hilbert space of dimension at least two is also a~convex set (even a~disc).

The numerical ranges of real linear operators may be more complicated sets than those of linear (or antilinear) operators. Nevertheless, we show that the numerical range of a~real linear operator on a~space of dimension at least two is still convex. So this generalizes the Hausdorff-Toeplitz theorem for linear operators.

\begin{theorem}
\label{theo:convexity}
Let $\dim H\ge 2$, let $\Phi:H\to H$ be a~real linear operator. Then $W(\Phi)$ is convex.
\end{theorem}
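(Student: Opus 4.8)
The natural first move is to reduce at once to $\dim H=2$: given $\lambda_0,\lambda_1\in W(\Phi)$, written as $\lambda_j=\langle\Phi x_j,x_j\rangle$ with unit vectors $x_j$, let $M$ be a two-dimensional subspace containing $x_0,x_1$ and let $\Psi=P_M\Phi|_M$ be the compression of $\Phi$ to $M$; then $\Psi$ is real linear on $M$, $\langle\Psi x,x\rangle=\langle\Phi x,x\rangle$ for $x\in M$, so $\lambda_0,\lambda_1\in W(\Psi)\subseteq W(\Phi)$, and it suffices to prove $W(\Psi)$ convex. Thus we may assume $\dim H=2$; in particular $W(\Phi)$ is compact. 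Write $\Phi=T+A$ with $T$ linear and $A$ antilinear, and for a unit vector $x$ set $D_x=\{z\in\CC:|z-\langle Tx,x\rangle|\le|\langle Ax,x\rangle|\}$, so that $W(\Phi)=\bigcup_{\|x\|=1}D_x$ by Corollary~\ref{C4}. Each $D_x$ is a closed disc, hence convex, so it is enough to show that $\mathrm{conv}(D_{x_0}\cup D_{x_1})\subseteq W(\Phi)$ for any two unit vectors $x_0,x_1$: then $[\lambda_0,\lambda_1]\subseteq\mathrm{conv}(D_{x_0}\cup D_{x_1})\subseteq W(\Phi)$. Since the convex hull of two closed discs with centres $c_j=\langle Tx_j,x_j\rangle$ and radii $r_j=|\langle Ax_j,x_j\rangle|$ equals $\bigcup_{0\le s\le1}B_s$, where $B_s$ is the closed disc with centre $(1-s)c_0+sc_1$ and radius $(1-s)r_0+sr_1$, the whole statement reduces to the inclusion $B_s\subseteq W(\Phi)$ for every $s\in[0,1]$. (Rewriting $W(\Phi)=\{c+w:c\in W(T),\,|w|\le\rho(c)\}$ with $\rho(c)=\max\{|\langle Ax,x\rangle|:\|x\|=1,\ \langle Tx,x\rangle=c\}$, this is precisely the concavity of $\rho$ on $W(T)$.)

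To prove $B_s\subseteq W(\Phi)$ I would argue in the spirit of Theorem~\ref{L1}. By the Hausdorff--Toeplitz theorem $W(T)$ is convex, so the centre $c_s:=(1-s)c_0+sc_1$ of $B_s$ lies in $W(T)$ and equals $\langle Ty,y\rangle$ for some unit vector $y$; then $B_s\subseteq D_y\subseteq W(\Phi)$ as soon as $y$ can be chosen with $|\langle Ay,y\rangle|\ge(1-s)r_0+sr_1$. I would secure this by a connectedness argument in $s$: the set $P=\{s\in[0,1]:B_s\subseteq W(\Phi)\}$ contains $0$ and $1$ and is closed (using compactness of $W(\Phi)$), and it is open at every $s$ with $B_s\subseteq\operatorname{int}W(\Phi)$. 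The remaining case — $B_s$ touching $\partial W(\Phi)$ — would be handled by the index/homotopy device of Theorem~\ref{L1}: along a continuous path $f$ in the unit sphere the doubly covered circles $\theta\mapsto\langle Tf(t),f(t)\rangle+e^{-2i\theta}\langle Af(t),f(t)\rangle$ all lie in $W(\Phi)$ and hence avoid a hypothetical hole $z_0\in\operatorname{int}B_s\setminus W(\Phi)$; choosing $f$ to terminate at a vector $y_\ast$ with $\langle Ay_\ast,y_\ast\rangle=0$ (which exists since $\dim H\ge2$, by Theorem~\ref{T5.4}), the terminal circle degenerates to the point $\langle Ty_\ast,y_\ast\rangle\in W(T)\subseteq W(\Phi)$, which is $\ne z_0$ and so has index $0$ about $z_0$; if some intermediate circle has index $-2$ about $z_0$, homotopy invariance of the index yields a contradiction, forcing $z_0\in W(\Phi)$.

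The crux, and the place where real work is needed, is to guarantee that some circle along $f$ genuinely winds around $z_0$. In Theorem~\ref{L1} this comes for free because there the point under scrutiny is assumed to lie in the interior of a disc $D_x$ coming from an actual unit vector; here $z_0$ lies only in the interior of the \emph{interpolated} disc $B_s$, and $z_0\notin W(\Phi)$ forces $z_0\notin D_x$ for every $x$, so no elementary circle $\partial D_x$ encircles $z_0$, and one must manufacture an enclosing circle inside $W(\Phi)$ by other means — equivalently, produce a unit vector $y$ with $\langle Ty,y\rangle$ near $c_s$ and $|\langle Ay,y\rangle|$ as large as required. The only ingredients available are the joint continuity of $x\mapsto(\langle Tx,x\rangle,\langle Ax,x\rangle)$ on the compact connected unit sphere, the phase relation $\langle Ae^{i\theta}x,e^{i\theta}x\rangle=e^{-2i\theta}\langle Ax,x\rangle$, and the convexity of $W(T)$; combining them so that the homotopy of circles actually sweeps over $z_0$ — while respecting the correction to \cite{KM} built into the proof of Theorem~\ref{T5.4} — is the substance of the argument.
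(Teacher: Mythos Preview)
Your reduction to $\dim H=2$ matches the paper's, but from there the argument is incomplete --- as you yourself acknowledge in the final paragraph. The connectedness scheme on $P=\{s:B_s\subseteq W(\Phi)\}$ does not close: openness of $P$ is only secured at those $s$ with $B_s\subset\operatorname{int}W(\Phi)$, and the index/homotopy device of Theorem~\ref{L1} cannot handle the boundary case for exactly the reason you state. A hypothetical $z_0\in\operatorname{int}B_s\setminus W(\Phi)$ lies in \emph{no} disc $D_x$, so every circle $\partial D_{f(t)}$ along any path $f$ in the unit sphere has index $0$ about $z_0$, and the homotopy is vacuous. Producing an enclosing loop inside $W(\Phi)$ ``by other means'' is the entire problem, and nothing in the proposal actually does it; the concavity of $\rho$ on $W(T)$ that you reformulate is equivalent to the theorem itself, not a step toward it.

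The paper's proof takes a different, more hands-on route and avoids this obstacle. After reducing to $\dim H=2$ and normalizing so that real numbers $w<0<w'$ lie in $W(\Phi)$ while $0\notin W(\Phi)$ and $(w,w')\cap W(\Phi)=\emptyset$, it picks unit vectors $v,v'$ realizing $w,w'$ and works with the \emph{unnormalized} linear paths $t\mapsto v\pm tv'$, $t\mapsto(2-t)v\pm v'$ in $H$. A short preliminary claim (using Theorem~\ref{L1}) pins down $\Re\langle Tv,v\rangle\le w$ and $\Re\langle Tv',v'\rangle\ge w'$. Setting $c=\langle(T+A)v,v'\rangle+\langle(T+A)v',v\rangle$, the proof splits on the sign of $\Im c$: if $\Im c=0$ the values $\langle(T+A)f(t),f(t)\rangle$ are all real and the intermediate value theorem forces one to vanish, contradicting $0\notin W(\Phi)$; if $\Im c\ne0$ the two sign choices of the path produce, via the intermediate value theorem applied to $t\mapsto\Re\langle Tf(t),f(t)\rangle$ together with Theorem~\ref{L1}, two purely imaginary points of $W(T)$ with opposite imaginary signs, and convexity of $W(T)$ then gives $0\in W(T)\subset W(\Phi)$. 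No winding-number argument is used beyond what is already packaged in Theorem~\ref{L1}.
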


\begin{proof}
Let $\Phi=T+A$, where $T:H\to H$ is a~linear operator and $A:H\to H$ an antilinear operator.
Suppose on the contrary that $W(T+A)$ is not convex. 

Without loss of generality, we may assume that $\dim H=2$. Indeed, let $x,y\in H$ be unit vectors and $\lambda\in{\rm conv}\,\{\langle (T+A)x,x\rangle, \langle (T+A)y,y\rangle\}$, $\lambda\notin W(T+A)$. By Theorem \ref{L1}, the vectors $x$ and $y$ are linearly independent.
Let $H_0=\bigvee\{x,y\}$. So $\dim H_0=2$.
Let $P$ be the orthogonal projection onto $H_0$. Let $T_0=PT|H_0$ and $A_0=PA|H_0$. Then $T_0,A_0:H_0\to H_0$, $T_0$ is linear, $A_0$ antilinear, $\langle (T+A)x,x\rangle, \langle (T+A)y,y\rangle\in W(T_0+A_0)$ and $W(T_0+A_0)\subset W(T+A)$. So $\lambda\notin W(T_0+A_0)$.

In particular, we may assume that $W(T+A)$ is closed.

We may also assume without loss of generality that there exist $w,w'\in\RR$ with
$w<0<w'$, $w,w'\in W(T+A)$ and $0\notin (T+A)$. Indeed, replace $T+A$ by $\alpha(T+\beta I+A)$ for suitable $\alpha,\beta\in\CC$, $|\alpha|=1$ if necessary.

Moreover, we may assume that $(w,w')\cap W(T+A)=\emptyset$.
There exist unit vectors $v,v'\in H$ with $\langle (T+A)v,v\rangle=w$ and $\langle (T+A)v',v'\rangle=w'$.
Let $\langle Tv,v\rangle=h$ and $\langle Tv',v'\rangle=h'$.
So
$\langle Av,v\rangle=w-h$ and $\langle Av',v'\rangle =w'-h'$.

\bigskip
\noindent{\bf Claim.} $\Re h\le w$ and $\Re h'\ge w'$.
\medskip

\begin{proof}
Suppose on the contrary that $\Re h > w$. Let $\e>0$, $w+\e<\min\{w', \Re h\}$. Then
$$
\bigl|\langle Tv,v\rangle -(w+\e)\bigr|^2
=|\Im h|^2+|\Re h-w-\e|^2
<|\Im h|^2+|\Re h-w|^2
=|\langle Av,v\rangle|^2.
$$
So $w+\e\in W(T+A)$ by Theorem \ref{L1}, a~contradiction with the assumption that $(w,w')\cap W(T+A)=\emptyset$.

Hence $\Re h\le w$. The inequality $\Re h'\ge w'$ can be proved in a~similar way.
\end{proof}

In particular, the previous claim implies that $h'\ne h$. So the vectors $v,v'$ are linearly independent. 

Let $c=\langle (T+A)v,v'\rangle+\langle (T+A)v',v\rangle$. We distinguish three cases:
\medskip
\begin{itemize}
\item[(i)] Let $\Im c=0$. Consider the function $f:[0,2]\to H$ defined by
\begin{align*}
&f(t)=v+tv'\qquad(0\le t\le 1),
\\
&f(t)=(2-t)v+v'\qquad(1\le t\le 2).
\end{align*}
Clearly $f$ is a~continuous function, $f(0)=v$ and $f(2)=v'$. For $t\in[0,1]$ we have
$$
\bigl\langle (T+A)f(t),f(t)\bigr\rangle =
\langle (T+A)v,v\rangle +t^2\langle (T+A)v',v'\rangle +tc
=w+ t^2w'+tc
$$
and 
$$
\Im\bigl\langle (T+A)f(t),f(t)\bigr\rangle =t\Im c=0.
$$
For $t\in[1,2]$ we have
\begin{align*}
\bigl\langle (T+A)f(t),f(t)\bigr\rangle &=
(2-t)^2\langle (T+A)v,v\rangle +\langle (T+A)v',v'\rangle +(2-t)c\\
&=
(2-t)^2w+w'+(2-t)c
\end{align*}
and
$$
\Im \bigl\langle (T+A)f(t),f(t)\bigr\rangle =(2-t)\Im c=0.
$$
So $\bigl\langle (T+A)f(t),f(t)\bigr\rangle \in\RR$ for all $t\in[0,2]$. Moreover,
$\langle (T+A)f(0),f(0)\rangle=w<0$ and $\langle (T+A)f(2),f(2)\rangle=w'>0$. 
By continuity of the function $t\mapsto \Re\bigl\langle (T+A)f(t),f(t)\bigr\rangle $, there exists $t_0\in(0,2)$ with $\Re \bigl\langle (T+A)f(t_0),f(t_0)\bigr\rangle =0$. So $\bigl\langle (T+A)f(t_0),f(t_0)\bigr\rangle =0$ and
$$
0=\Bigl\langle (T+A)\frac{f(t_0)}{\|f(t_0)\|},\frac{f(t_0)}{\|f(t_0)\|}\Bigr\rangle\in W(T+A),
$$
a contradiction.

\item[(ii)] Let $\Im c>0$. Let $f:[0,2]\to H$ be the function considered above. As above, we have $\Im \bigl\langle (T+A)f(t),f(t)\bigr\rangle >0$ for all $t\in(0,2)$.

We have $\Re\langle Tf(0),f(0)\rangle =\Re h\le w<0$ and $\Re\langle Tf(2),f(2)\rangle =\Re h'\ge w'>0$. 
By continuity of the function $t\mapsto \Re\bigl\langle Tf(t),f(t)\bigr\rangle $, there exists $t_1\in(0,2)$ with $\Re \bigl\langle Tf(t_1),f(t_1)\bigr\rangle =0$. By the assumption $0\notin W(T+A)$. So by Theorem \ref{L1}, 
$$
\bigl|\bigl\langle Af(t_1),f(t_1)\bigr\rangle\bigr|<
\bigl|\bigl\langle Tf(t_1),f(t_1)\bigr\rangle\bigr|= \bigl|\Im \bigl\langle Tf(t_1),f(t_1)\bigr\rangle\bigr|.
$$
Since $\Im\bigl\langle (T+A)f(t_1),f(t_1)\bigr\rangle>0$, we have
$\Im\bigl\langle Tf(t_1),f(t_1)\bigr\rangle>0$.

Let $a:=\frac{\bigl\langle Tf(t_1),f(t_1)\bigr\rangle}{\|f(t_1)\|^2}$. So $a\in W(T)\cap i\RR$ with $\Im a>0$.

Consider now the function $g:[0,2]\to H$ defined by
\begin{align*}
&g(t)=v-tv'\qquad(0\le t\le 1),
\\
&g(t)=(2-t)v-v'\qquad(1\le t\le 2).
\end{align*}
Clearly $g$ is a~continuous function. For $t\in[0,1]$ we have
$$
\bigl\langle (T+A)g(t),g(t)\bigr\rangle =
\langle (T+A)v,v\rangle +t^2\langle (T+A)v',v'\rangle -tc
=w+ t^2w'-tc
$$
and 
$$
\Im\bigl\langle (T+A)g(t),g(t)\bigr\rangle =-t\Im c<0.
$$
For $t\in[1,2]$ we have
\begin{align*}
\bigl\langle (T+A)g(t),g(t)\bigr\rangle &=
(2-t)^2\langle (T+A)v,v\rangle +\langle (T+A)v',v'\rangle -(2-t)c\\
&=
(2-t)^2w+w'-(2-t)c
\end{align*}
and
$$
\Im \bigl\langle (T+A)g(t),g(t)\bigr\rangle =-(2-t)\Im c<0.
$$
Again there exists $t_2\in(0,2)$ with $\Re \langle Tg(t_2),g(t_2)\rangle =0$. So by Theorem \ref{L1}, 
$$
\bigl|\langle Ag(t_2),g(t_2)\rangle\bigr|<
\bigl|\langle Tg(t_2),g(t_2)\rangle\bigr|= \bigl|\Im Tg(t_2),g(t_2)\rangle\bigr|.
$$
Since $\Im\bigl\langle (T+A)g(t_2),g(t_2)\bigr\rangle<0$, we have
$\Im\bigl\langle Tg(t_2),g(t_2)\bigr\rangle<0$.

Let $b:=\frac{\bigl\langle Tg(t_2),g(t_2)\bigr\rangle}{\|g(t_2)\|^2}$. So $b\in W(T)\cap i\RR$ with $\Im b<0$.
By the convexity of the set $W(T)$, we conclude that $0\in W(T)\subset W(T+A)$, a~contradiction.

\item[(iii)] Let $\Im c<0$. This case can be treated similarly. The functions $f$ and $g$ considered above will imply that there are $a,b\in W(T)\cap i\RR$ with $\Im a<0$ and $\Im b>0$. So $0\in W(T)\subset W(T+A)$, which again is a~contradiction.
\end{itemize}
Hence the set $W(T+A)$ is convex.
\end{proof}

\begin{remark}
The assumption $\dim H\ge 2$ is clearly necessary. Let $\Phi_{\alpha,\beta}$ be a circlet (this class of real linear operators was introduced and studied in~\cite{HN}), i.e. the real linear operator acting on Hilbert space $H=\CC$ given by
$\Phi_{\alpha,\beta}z=\alpha z+\beta \bar{z}\quad(\alpha,\beta,z\in\CC)$, which is the most general form of real linear operator on $\CC$. It is easy to check that $W(A)=\{\alpha+|\beta|z\in\CC: |z|=1\}$, which is convex if and only if the antilinear part of $\Phi_{\alpha,\beta}$ vanishes.
\end{remark}

One of the basic properties of the numerical range of linear operators $T$ is inequality
\begin{equation}
\label{eq:1} 
\frac{\|T\|}{2}\leq w(T)\leq \|T\|.
\end{equation}
Moreover, $T=\Re T +i\Im T$, where $\Re T=\frac{T+T^*}{2}$ and $\Im T=\frac{T-T^*}{2i}$ are self-adjoint operators. Each self-adjoint operator $S$ satisfies $W(S)\subset \RR$ and $w(S)=\|S\|$.

If $A:H\to H$ is an antilinear operator, then one can write $A=A_1+A_2$, where
$$
A_1=\frac{A+A^*}{2}\qquad{\rm and}\qquad A_2=\frac{A-A^*}{2}
$$
satisfy $A_1^*=A_1$ and $A_2^*=-A_2$.
By~\cite{KM}, $W(A_2)=\{0\}$, and so $W(A)=W(A_1)$.

\begin{theorem}\label{T6.6}
Let $\Phi:H\to H$ be a~real linear operator, $\Phi^*=\Phi$. Then $w(\Phi)=\|\Phi\|$.
\end{theorem}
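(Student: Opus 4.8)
The plan is to prove $w(\Phi)\le\|\Phi\|$ and $w(\Phi)\ge\|\Phi\|$ separately. The first is immediate, since $|\langle\Phi x,x\rangle|\le\|\Phi x\|\le\|\Phi\|$ for every unit vector $x$. For the reverse inequality the crucial ingredient is the identity
$$
\Re\langle\Phi u,v\rangle=\Re\langle u,\Phi v\rangle\qquad(u,v\in H).
$$
To see it, write $\Phi=T+A$ with $T$ linear and $A$ antilinear; since $T^*$ is linear, $A^*$ antilinear and $\Phi^*=T^*+A^*=\Phi=T+A$, the uniqueness of the canonical decomposition forces $T^*=T$ and $A^*=A$, and then $\Re\langle\Phi u,v\rangle=\Re\langle Tu,v\rangle+\Re\langle Au,v\rangle=\Re\langle u,Tv\rangle+\Re\overline{\langle u,Av\rangle}=\Re\langle u,\Phi v\rangle$.

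First I would fix $\e>0$ (we may assume $\Phi\ne0$, otherwise the claim is trivial) and choose a unit vector $x$ with $\|\Phi x\|>\|\Phi\|-\e$. Put $\rho=\|\Phi x\|>0$ and $y=\rho^{-1}\Phi x$, so that $y$ is a unit vector with $\langle\Phi x,y\rangle=\rho$. By the identity above, $\Re\langle\Phi y,x\rangle=\Re\langle y,\Phi x\rangle=\Re\overline{\langle\Phi x,y\rangle}=\rho$; hence
$$
\Re\bigl(\langle\Phi x,y\rangle+\langle\Phi y,x\rangle\bigr)=2\rho,
$$
and therefore $\bigl|\langle\Phi x,y\rangle+\langle\Phi y,x\rangle\bigr|\ge2\rho$.

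Next comes the polarization step. Expanding by additivity of $\Phi$ one gets
$$
\langle\Phi(x+y),x+y\rangle-\langle\Phi(x-y),x-y\rangle=2\langle\Phi x,y\rangle+2\langle\Phi y,x\rangle,
$$
so the left-hand side has modulus $\ge4\rho$. On the other hand, for any nonzero $z\in H$ one has $\langle\Phi z,z\rangle=\|z\|^2\langle\Phi(z/\|z\|),z/\|z\|\rangle$ (using that $\|z\|\in\RR$ and $\Phi$ is real linear), so $|\langle\Phi z,z\rangle|\le\|z\|^2w(\Phi)$; applying this with $z=x+y$ and $z=x-y$ and invoking the parallelogram law $\|x+y\|^2+\|x-y\|^2=2\|x\|^2+2\|y\|^2=4$ shows that the same left-hand side has modulus $\le4w(\Phi)$. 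Hence $4\rho\le4w(\Phi)$, i.e. $w(\Phi)\ge\|\Phi\|-\e$, and letting $\e\to0$ yields $w(\Phi)\ge\|\Phi\|$, completing the proof.

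I do not anticipate a real obstacle. The only slightly delicate points are the self-adjointness identity — which, because of the conjugation implicit in the Hilbert-space identification $H^*=H$, is safest to derive directly from the canonical decomposition as above rather than read off verbatim from Section~2 — and a couple of degenerate cases: $\Phi=0$ is trivial, and if $x+y=0$ or $x-y=0$ then $x$ is an eigenvector of $\Phi$ with eigenvalue $\pm\rho$, so $|\langle\Phi x,x\rangle|=\rho$ and $w(\Phi)\ge\rho$ directly. Note that, in contrast with the earlier results of this section, the argument uses neither Corollary~\ref{C4} nor the disc description of $W(\Phi)$ — only that $\langle\Phi u,u\rangle\in W(\Phi)$ for every unit $u$ together with the parallelogram identity — and in particular it requires no assumption on $\dim H$.
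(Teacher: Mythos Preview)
Your proof is correct and follows essentially the same route as the paper: both arguments rest on the polarization identity $\langle\Phi(x+y),x+y\rangle-\langle\Phi(x-y),x-y\rangle=2\langle\Phi x,y\rangle+2\langle\Phi y,x\rangle$ combined with the parallelogram law, and both exploit the self-adjointness $T^*=T$, $A^*=A$ to control the cross term. The only cosmetic difference is that the paper bounds $|\Re\langle\Phi x,y\rangle|\le w(\Phi)$ for \emph{all} unit $x,y$ and then takes the supremum (using $\|\Phi\|=\sup|\Re\langle\Phi x,y\rangle|$), whereas you fix a near-optimal pair $(x,\Phi x/\|\Phi x\|)$ from the outset; the underlying estimate is identical.
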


\begin{proof}
Clearly $w(\Phi)\le\|\Phi\|$.

Let $\Phi=T+A$, where $T$ is a~linear operator and $A$ is an antilinear operator. We have $T^*=T$ and $A^*=A$.

Let $x,y\in H$ be unit vectors. We have
$$
\langle A(x+y),x+y\rangle-\langle A(x-y),x-y\rangle =2\langle Ax,y\rangle+2\langle Ay,x\rangle=
4\langle Ax,y\rangle
$$
and
$$
\langle T(x+y),x+y\rangle-\langle T(x-y),x-y\rangle =2\langle Tx,y\rangle+2\langle Ty,x\rangle=
4\Re\langle Tx,y\rangle.
$$
So
\begin{align*}
&4\bigl|\Re\bigl\langle (T+A)x,y\bigr\rangle\bigr|=
\Bigl|\Re\bigl\langle(T+A)(x+y),x+y\bigr\rangle -\Re\bigl\langle(T+A)(x-y),x-y\bigr\rangle\Bigr|
\cr
\le
&\bigl|\bigl\langle(T+A)(x+y),x+y\bigr\rangle\bigr| + \bigl|\bigl\langle(T+A)(x-y),x-y\bigr\rangle\bigr|
\cr
\le
&w(T+A)\bigl(\|x+y\|^2+\|x-y\}|^2\bigr)
=
2w(T+A)\bigl(\|x\|^2+\|y\|^2\bigr)=4w(T+A)
\end{align*}
by the parallelogram law. 
Hence
$$
\|T+A\|=\sup\bigl\{|\Re \langle(T+A)x,y\rangle|:\|x\|=\|y\|=1\bigr\}\le w(T+A),
$$
and so $w(\Phi)=\|\Phi\|$.
\end{proof}

\begin{corollary}
\label{corol}
Let $A:H\to H$ be an antilinear operator. Then
$$
W(A)=W\Bigl(\frac{A+A^*}{2}\Bigr)\qquad {\rm and} \qquad w(A)=\frac{\|A+A^*\|}{2}.
$$
\end{corollary}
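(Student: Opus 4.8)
The plan is to reduce the statement entirely to Theorem~\ref{T6.6}, using the decomposition $A=A_1+A_2$ introduced just above the corollary, where $A_1=\frac{A+A^*}{2}$ and $A_2=\frac{A-A^*}{2}$ satisfy $A_1^*=A_1$ and $A_2^*=-A_2$.

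First I would establish the identity $W(A)=W(A_1)$. The key point is that $\langle A_2x,x\rangle=0$ for every $x\in H$: from the defining relation of the antilinear adjoint one gets $\langle A_2x,x\rangle=\overline{\langle x,A_2^*x\rangle}=-\overline{\langle x,A_2x\rangle}=-\langle A_2x,x\rangle$, so this quantity vanishes (this is precisely the fact $W(A_2)=\{0\}$ recorded from \cite{KM}). Hence $\langle Ax,x\rangle=\langle A_1x,x\rangle$ for all unit vectors $x$, which yields $W(A)=W(A_1)$, and in particular $w(A)=w(A_1)$. This already gives the first asserted equality, $W(A)=W\bigl(\frac{A+A^*}{2}\bigr)$.

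Next I would apply Theorem~\ref{T6.6} to $\Phi=A_1$. Since $A_1$ is antilinear it is in particular a real linear operator, and under the Hilbert space identification $H^*\cong H$ its real-linear dual coincides with its antilinear adjoint $A_1^*$, which equals $A_1$ by construction; thus the hypothesis $\Phi^*=\Phi$ of Theorem~\ref{T6.6} is met. That theorem then gives $w(A_1)=\|A_1\|$, and combining the steps we obtain $w(A)=w(A_1)=\|A_1\|=\bigl\|\frac{A+A^*}{2}\bigr\|=\frac{\|A+A^*\|}{2}$, which is the second asserted equality. There is no serious obstacle in this argument; the only point deserving a moment's care is the bookkeeping just mentioned — confirming that the adjoint appearing in Theorem~\ref{T6.6} restricts on antilinear operators to the antilinear adjoint $A^*$ — after which the corollary is immediate.
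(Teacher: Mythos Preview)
Your proposal is correct and follows exactly the route the paper intends: the paper derives $W(A)=W(A_1)$ from $W(A_2)=\{0\}$ (citing \cite{KM}, whose content is precisely the computation you spell out) in the paragraph preceding Theorem~\ref{T6.6}, and then the corollary is the immediate combination of that identity with Theorem~\ref{T6.6} applied to the self-adjoint antilinear operator $A_1$. Your only addition is writing out the one-line verification of $\langle A_2x,x\rangle=0$ rather than quoting it, which is fine.
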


The Corollary~\ref{corol} was first proven in~\cite{JL} using the theory of complex symmetric operators. Here, we present an alternative approach.

For a~real linear operator $\Phi$ on a~Hilbert space $H$, let $r(\Phi)$ denote the spectral radius of $\Phi$, $r(\Phi)=\max\{|\lambda|: \lambda\in\sigma(\Phi)\}$. Clearly $r(\Phi)\le w(\Phi)\le\|\Phi\|$.

\begin{theorem}\label{T6.8}
Let $\Phi$ be a~real linear operator on a~Hilbert space $H$. If $w(\Phi)=\|\Phi\|$ then
$$
r(\Phi)=w(\Phi)=\|\Phi\|.
$$
In particular, if $\Phi^*=\Phi$ then $r(\Phi)=w(\Phi)=\|\Phi\|$.
\end{theorem}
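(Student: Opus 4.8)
The plan is to prove the single nontrivial inequality $r(\Phi)\ge w(\Phi)$ under the hypothesis $w(\Phi)=\|\Phi\|$; combined with the chain $r(\Phi)\le w(\Phi)\le\|\Phi\|$ noted just before the statement, this yields all three equalities at once, and the ``in particular'' clause then follows from Theorem~\ref{T6.6}, which gives $w(\Phi)=\|\Phi\|$ whenever $\Phi^*=\Phi$. First I would reduce to a normalized situation: if $\Phi=0$ the claim is trivial, so I may assume $\|\Phi\|=1$, hence $w(\Phi)=1$. By the definition of the numerical radius there is a sequence of unit vectors $(x_n)$ in $H$ with $|\langle\Phi x_n,x_n\rangle|\to 1$, and since the scalars $\langle\Phi x_n,x_n\rangle$ are bounded by $1$, after passing to a subsequence I may assume $\langle\Phi x_n,x_n\rangle\to\mu$ for some $\mu\in\CC$ with $|\mu|=1$.

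The heart of the argument is the elementary identity
$$
\|\Phi x_n-\mu x_n\|^2=\|\Phi x_n\|^2-2\Re\bigl(\bar\mu\langle\Phi x_n,x_n\rangle\bigr)+|\mu|^2,
$$
which holds purely by sesquilinearity of the inner product and makes no use of any linearity of $\Phi$. Since $\|\Phi x_n\|\le\|\Phi\|=1$ and $\bar\mu\langle\Phi x_n,x_n\rangle\to|\mu|^2=1$, the right-hand side is bounded above by $1-2\Re\bigl(\bar\mu\langle\Phi x_n,x_n\rangle\bigr)+1$, which tends to $0$. Hence $\|(\Phi-\mu I)x_n\|\to 0$, so $\Phi-\mu I$ is not bounded below, i.e.\ $\mu\in\sigma_{ap}(\Phi)$, and therefore $\mu\in\sigma(\Phi)$ by Theorem~\ref{T4.3}(i). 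Thus $r(\Phi)\ge|\mu|=1=\|\Phi\|$, which forces $r(\Phi)=w(\Phi)=\|\Phi\|$; undoing the normalization gives the general case.

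I do not anticipate any genuine obstacle; the proof is short. The one place deserving a moment of care is checking that the classical reasoning ``$\langle Tx_n,x_n\rangle\to\|T\|$ implies $(T-I)x_n\to0$ for a contraction'' survives when $\Phi$ is merely real linear. It does: the displayed identity uses only that $\langle\cdot,\cdot\rangle$ is sesquilinear and that $\|\Phi x_n\|\le\|\Phi\|$, and the unimodular limit $\mu$ is carried through the identity directly, so no rotation or appeal to $\sigma(e^{i\theta}\Phi)=e^{i\theta}\sigma(\Phi)$ is needed. The sign bookkeeping in the cross term, where $\bar\mu\langle\Phi x_n,x_n\rangle$ and its conjugate both occur, should be verified once, but it is routine.
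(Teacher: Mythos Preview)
Your proposal is correct and follows essentially the same argument as the paper: normalize to $\|\Phi\|=1$, extract a subsequence with $\langle\Phi x_n,x_n\rangle\to\mu$ of modulus one, expand $\|\Phi x_n-\mu x_n\|^2$ to show it tends to zero, and conclude $\mu\in\sigma_{ap}(\Phi)$. The paper's proof is identical in structure and detail, including the final appeal to Theorem~\ref{T6.6} for the self-adjoint case.
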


\begin{proof}
Without loss of generality, we may assume that $\|\Phi\|=1$. If $w(\Phi)=\|\Phi\|=1$ then there exists a~sequence $(x_n)$ of unit vectors in $H$ such that $|\langle \Phi x_n,x_n\rangle|\to 1$. So there exist $\lambda\in\CC$, $|\lambda|=1$ and a~subsequence $(x_{n_k})$ such that
$$
\lim_{k\to\infty}\langle \Phi x_{n_k},x_{n_k}\rangle=\lambda.
$$
We have
$$
\|\Phi x_{n_k}-\lambda x_{n_k}\|^2=
\|\Phi x_{n_k}\|^2+\|\lambda x_{n_k}\|^2-2\Re\langle \Phi x_{n_k},\lambda x_{n_k}\rangle\le
2-2\Re\bar\lambda\langle \Phi x_{n_k}, x_{n_k}\rangle\to 0.
$$
Hence $\lambda\in\sigma_{ap}(\Phi)$ and $r(\Phi)=1$.
The last part of the theorem is a straightforward consequence of the obtained result and Theorem~\ref{T6.6}.
\end{proof}

\begin{theorem}
Let $A:H\to H$ be an antilinear operator and $A_1=(A+A^*)/2$ its self-adjoint part. Then the following conditions are equivalent:
\begin{itemize}
    \item[(i)] $w(A)=\|A\|$;
    \item[(ii)] $\|A\|=\|A_1\|$;
    \item[(iii)] $\{z\in\CC: |z|=\|A\|\}\subset \sigma_{ap}(A)$;
    \item[(iv)] $\{z\in\CC: |z|=\|A\|\}\subset \sigma_{ap}(A_1)$.
\end{itemize}
\end{theorem}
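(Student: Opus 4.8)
The plan is to route the argument through the self-adjoint antilinear part $A_1=(A+A^*)/2$, using three ingredients established above: the identity $w(A)=\|A_1\|$ from Corollary~\ref{corol}; Theorem~\ref{T6.8}, which says that $w(\Phi)=\|\Phi\|$ forces $r(\Phi)=w(\Phi)=\|\Phi\|$ and in particular holds for every self-adjoint $\Phi$; and Theorem~\ref{T4.4}, which says that for an antilinear operator the approximate point spectrum is invariant under rotations. The case $A=0$ is trivial (all four conditions then hold), so I would assume $\|A\|>0$ throughout.

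The heart of the matter is a single auxiliary claim: \emph{for any antilinear operator $B$ on $H$ with $\|B\|>0$, one has $w(B)=\|B\|$ if and only if $\{z\in\CC:|z|=\|B\|\}\subset\sigma_{ap}(B)$.} For the forward direction, Theorem~\ref{T6.8} gives $r(B)=\|B\|$, so the compact set $\sigma(B)$ (Theorem~\ref{T4.3}) is nonempty and contains a point $\lambda$ with $|\lambda|=\|B\|$; since $\sigma(B)\subset\{z:|z|\le\|B\|\}$, this $\lambda$ lies on $\partial\sigma(B)$, hence in $\sigma_{ap}(B)$ by Theorem~\ref{T4.3}(v), and then Theorem~\ref{T4.4}(ii) promotes it to the whole circle. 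For the converse, pick any $\lambda$ with $|\lambda|=\|B\|$; then $\lambda\in\sigma_{ap}(B)\subset\sigma(B)\subset\overline{W(B)}$, so $w(B)\ge\|B\|$, and equality follows since always $w(B)\le\|B\|$.

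Granting the claim, the equivalences are quick. (i)$\Leftrightarrow$(ii) is just the identity $w(A)=\|A_1\|$. (i)$\Leftrightarrow$(iii) is the claim applied to $B=A$. For (ii)$\Leftrightarrow$(iv): $A_1$ is self-adjoint and antilinear, so by Theorem~\ref{T6.8} we have $w(A_1)=\|A_1\|$, and the claim applied to $B=A_1$ gives $\{z:|z|=\|A_1\|\}\subset\sigma_{ap}(A_1)$ unconditionally. If (ii) holds, then $\|A_1\|=\|A\|$ and this is exactly (iv). Conversely, if (iv) holds, take $\lambda$ with $|\lambda|=\|A\|$; then $\lambda\in\sigma_{ap}(A_1)\subset\overline{W(A_1)}$ gives $\|A\|\le w(A_1)=\|A_1\|$, while $\|A_1\|=w(A)\le\|A\|$ by Corollary~\ref{corol}, so $\|A_1\|=\|A\|$, i.e. (ii).

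I do not expect a genuine obstacle here: the proof is essentially an assembly of previously proved facts. The two points requiring care are (a) extracting an \emph{approximate} eigenvalue of modulus $\|B\|$ from $r(B)=\|B\|$, which is why the boundary inclusion $\partial\sigma(B)\subset\sigma_{ap}(B)$ is needed rather than mere nonemptiness of the spectrum, and (b) the use of the circular symmetry of $\sigma_{ap}$, which is what turns a single critical spectral point into the full critical circle and is genuinely special to the antilinear (as opposed to general real linear) setting.
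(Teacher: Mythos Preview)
Your proposal is correct and follows essentially the same route as the paper: both use Corollary~\ref{corol} for (i)$\Leftrightarrow$(ii), and for the spectral equivalences both combine Theorem~\ref{T6.8} (to get $r=\|\cdot\|$), the boundary inclusion $\partial\sigma\subset\sigma_{ap}$ from Theorem~\ref{T4.3}(v), and the circular symmetry from Theorem~\ref{T4.4}. The only organisational difference is that you abstract the core argument into a single auxiliary claim about antilinear $B$ and reuse it for $A$ and $A_1$, whereas the paper runs the same steps inline twice; the content is the same.
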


\begin{proof}
By Corollary \ref{corol}, $w(A)=w(A_1)=\|A_1\|$. So, (i)$\Leftrightarrow$(ii).
		
(i)$\Rightarrow$ (iii): By Theorem \ref{T6.8}, $r(A)=\|A\|$. By Theorems \ref{T4.4} and \ref{T4.3} (v), 
$\{z\in\CC: |z|=\|A\|\}\subset \sigma_{ap}(A)$.

(iii)$\Rightarrow$ (i): Clearly, (iii) implies that $r(A)=\|A\|$, and so $w(A)=\|A\|$.

(ii)$\Leftrightarrow$ (iv): We have $w(A_1)=\|A_1\|$. By Theorem \ref{T6.8}, $r(A_1)=\|A_1\|$. By Theorems \ref{T4.4} and \ref{T4.3} (v), $\{z\in\CC: |z|=\|A_1\|\}\subset \sigma_{ap}(A_1)$.  So (ii) implies (iv).

Conversely, (iv) implies that $\|A\|\le r(A_1)\le\|A_1\|\le\|A\|$, and so $\|A\|=\|A_1\|$.
\end{proof}

For an antilinear operator $A$ we have either $W(A)=\{z\in\CC: |z|\le\|A_1\|\}$ or  $W(A)=\{z\in\CC: |z|<\|A_1\|\}$.
In~\cite{JL} it was proved that $W(A)$ is a closed disc if and only if the operator $A_1$ is norm-attaining.
The next result gives an equivalent criterion.

\begin{theorem}
Let $A:H\to H$ be an antilinear operator and $A_1=(A+A^*)/2$ its self-adjoint part. Then
$\|A_1\|\in \sigma_{p}(A_1)$ if and only if $W(A)=\{z\in\CC: |z|\le \|A_1\|\}$.
\end{theorem}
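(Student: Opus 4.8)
The plan is to reduce the statement to the dichotomy recorded just before it: for an antilinear operator $A$, the set $W(A)$ is either the open disc $\{z\in\CC:|z|<\|A_1\|\}$ or the closed disc $\{z\in\CC:|z|\le\|A_1\|\}$. Combined with $W(A)=W(A_1)$ (Corollary~\ref{corol}) and the circular symmetry of the numerical range of an antilinear operator, ``$W(A)$ is the closed disc'' is equivalent to ``the real number $\|A_1\|$ belongs to $W(A_1)$'', i.e. to the existence of a unit vector $x$ with $\langle A_1x,x\rangle=\|A_1\|$. Thus the whole theorem comes down to: such an $x$ exists if and only if $A_1x=\|A_1\|x$ for some $x\neq0$. (If $A_1=0$ both sides are trivially true, so one may assume $\|A_1\|>0$.)

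One direction is immediate: if $A_1x=\|A_1\|x$ with $\|x\|=1$, then $\langle A_1x,x\rangle=\|A_1\|\in W(A_1)=W(A)$, so $W(A)$ contains a point of modulus $\|A_1\|$ and therefore cannot be the open disc; by the dichotomy it must be the closed disc. For the converse, assume $W(A)=\{z\in\CC:|z|\le\|A_1\|\}$, so that $\|A_1\|\in W(A_1)$ and one may choose a unit vector $x$ with $\langle A_1x,x\rangle=\|A_1\|$. The key step is the norm identity already used in the proof of Theorem~\ref{T6.8}:
$$
\bigl\|A_1x-\|A_1\|\,x\bigr\|^2=\|A_1x\|^2-2\|A_1\|\,\Re\langle A_1x,x\rangle+\|A_1\|^2\|x\|^2\le\|A_1\|^2-2\|A_1\|^2+\|A_1\|^2=0,
$$
where one uses $\|A_1x\|\le\|A_1\|$ and $\Re\langle A_1x,x\rangle=\|A_1\|$. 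Hence $A_1x=\|A_1\|x$; since $x\neq0$, the operator $A_1-\|A_1\|I$ is not injective, i.e. $\|A_1\|\in\sigma_p(A_1)$.

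The computations here are one-line each, so the only genuine obstacle is bookkeeping around the disc dichotomy: it, and hence the theorem, is used under the standing assumption $\dim H\ge2$, because it rests on the convexity Theorem~\ref{theo:convexity}, on circular symmetry of the antilinear numerical range, and on $0\in W(A)$ (Theorem~\ref{T5.4}), all of which require at least two dimensions. Once that hypothesis is in force, the argument above closes the equivalence.
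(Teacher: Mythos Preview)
Your proof is correct and follows essentially the same route as the paper: both directions hinge on the open/closed disc dichotomy for $W(A)=W(A_1)$ and on the norm computation $\|A_1x-\|A_1\|x\|^2\le0$ to extract the eigenvector in the converse direction. Your write-up is in fact a bit more explicit than the paper's about where the dichotomy (and hence the standing hypothesis $\dim H\ge 2$) enters.
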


\begin{proof}
The statement is clear if $A_1=0$. So without loss of generality, we may assume that $\|A_1\|=1$.

If $1\in\sigma_p(A_1)$ then there exists a~unit vector $x\in H$ with $A_1x=x$. Then 
$$
1=\langle A_1x,x\rangle\in W(A_1)=W(A),
$$
and so $\{z\in\CC: |z|\le 1\}\subset W(A)$.
Hence $W(A)=\{z\in\CC: |z|\le 1\}$.

Conversely, if $x\in H$ is a~unit vector satisfying $\langle Ax,x\rangle=1=\|A_1\|$, then
$\langle A_1x,x\rangle=1$ and
$$
\|A_1x-x\|^2=
\|A_1x\|^2+\|x\|^2-2\Re\langle A_1x,x\rangle=
\|A_1x\|^2-1\le 0.
$$
So $A_1x=x$ and $1\in\sigma_p(A_1)$.

\end{proof}
The following result is a generalization of inequality~(\ref{eq:1}) to the case of real linear operators on complex Hilbert spaces. 
It is also a significant improvement of Proposition~$6.14$ from~\cite{HN}.
\begin{theorem}
Let $T:H\to H$ be a~linear operator, $A:H\to H$ antilinear, let $A_1=(A+A^*)/2$. Then $W(T+A)=W(T+A_1)$ and
$$
\frac{1}{2}\|T+A_1\|\le
\frac{1}{2}\|T+2A_1\|\le
w(T+A_1)=w(T+A)\le \|T+A_1\|.
$$
\end{theorem}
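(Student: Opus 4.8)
The plan is to establish the equality $W(T+A)=W(T+A_1)$ and then the four inequalities of the displayed chain separately, the middle one carrying the real content.

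First I would handle the cheap parts. Writing $A=A_1+A_2$ with $A_2=(A-A^*)/2$, so that $A_2^*=-A_2$, the fact recorded above that $W(A_2)=\{0\}$ (from~\cite{KM}) gives $\langle A_2x,x\rangle=0$ for every unit vector $x$; hence $\langle(T+A)x,x\rangle=\langle(T+A_1)x,x\rangle$ for all such $x$, so $W(T+A)=W(T+A_1)$ and in particular $w(T+A)=w(T+A_1)$. The inequality $w(T+A_1)\le\|T+A_1\|$ is immediate from $|\langle\Phi x,x\rangle|\le\|\Phi x\|\le\|\Phi\|$ for $\|x\|=1$. The inequality $\|T+A_1\|\le\|T+2A_1\|$ is the monotonicity estimate proved in the Birkhoff--James section, namely $\|\alpha T+\beta A\|\le\|\alpha'T+\beta'A\|$ whenever $|\alpha|\le|\alpha'|$ and $|\beta|\le|\beta'|$, applied to the linear operator $T$, the antilinear operator $A_1$, and the choices $(\alpha,\beta)=(1,1)$, $(\alpha',\beta')=(1,2)$.

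The substantive step is $\|T+2A_1\|\le2\,w(T+A_1)$, and here I would use a polarization identity tailored to the self-adjointness of $A_1$. Since $A_1^*=A_1$ one has $\langle A_1y,x\rangle=\langle A_1x,y\rangle$, so the antilinear form $(x,y)\mapsto\langle A_1x,y\rangle$ is symmetric; because of this, the usual four-term polarization — which recovers $\langle Tx,y\rangle$ from the quadratic form $v\mapsto\langle Tv,v\rangle$ — recovers instead $2\langle A_1x,y\rangle$ from the form $v\mapsto\langle A_1v,v\rangle$. Concretely, for all $x,y\in H$,
\[
\langle(T+2A_1)x,y\rangle=\frac14\sum_{\lambda\in\{1,i,-1,-i\}}\lambda\,\bigl\langle(T+A_1)(x+\lambda y),\,x+\lambda y\bigr\rangle ,
\]
which is checked by expanding the right-hand side: the identities $\sum\lambda=\sum\lambda^2=\sum\bar\lambda=0$ annihilate every term except $\langle Tx,y\rangle$, appearing with coefficient $\frac14\sum|\lambda|^2=1$, and $\langle A_1x,y\rangle$, appearing (because of the symmetry) with coefficient $\frac14\cdot2\sum|\lambda|^2=2$. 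Since $\langle(T+A_1)v,v\rangle=\|v\|^2\langle(T+A_1)(v/\|v\|),v/\|v\|\rangle\in\|v\|^2\,W(T+A_1)$ by positive homogeneity (and is $0$ when $v=0$), this identity yields
\[
\bigl|\langle(T+2A_1)x,y\rangle\bigr|\le\frac14\,w(T+A_1)\sum_{\lambda\in\{1,i,-1,-i\}}\|x+\lambda y\|^2=w(T+A_1)\bigl(\|x\|^2+\|y\|^2\bigr)
\]
by the parallelogram identity, hence $\bigl|\langle(T+2A_1)x,y\rangle\bigr|\le2\,w(T+A_1)$ for unit vectors $x,y$. Taking the supremum over such $x,y$ and using $\|T+2A_1\|=\sup\{|\langle(T+2A_1)x,y\rangle|:\|x\|=\|y\|=1\}$ completes this step; assembling the four bounds together with $w(T+A)=w(T+A_1)$ then gives the theorem. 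Note that no hypothesis on $\dim H$ is needed.

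The main, and essentially only, obstacle is isolating and verifying the polarization identity above — in particular checking that the self-adjoint antilinear part enters with the doubled coefficient while the linear part enters with coefficient one and all remaining terms cancel; this is a short but slightly delicate computation with the fourth roots of unity that genuinely uses $A_1^*=A_1$. Everything else (the parallelogram identity, the norm formula $\|\Phi\|=\sup\{|\langle\Phi x,y\rangle|:\|x\|=\|y\|=1\}$, and positive homogeneity of $v\mapsto\langle(T+A_1)v,v\rangle$) is routine, and the two outer inequalities are quoted from results already in the paper.
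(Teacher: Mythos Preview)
Your proof is correct and follows essentially the same approach as the paper: both establish $W(T+A)=W(T+A_1)$ via $\langle A_2x,x\rangle=0$, and both obtain the key inequality $\|T+2A_1\|\le 2w(T+A_1)$ from the four-term polarization identity, exploiting the symmetry $\langle A_1x,y\rangle=\langle A_1y,x\rangle$ (from $A_1^*=A_1$) so that the antilinear part picks up a doubled coefficient, then bounding with the parallelogram law. The only cosmetic difference is that you justify $\|T+A_1\|\le\|T+2A_1\|$ by invoking the Birkhoff--James monotonicity proposition, whereas the paper simply declares the outer inequalities ``clear.''
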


\begin{proof}
Let $A_2=\frac{A-A^*}{2}$. For $x\in H$, $\|x\|=1$ we have 
$$
\langle (T+A)x,x\rangle=\langle (T+A_1)x,x\rangle+\langle A_2x,x\rangle=\langle (T+A_1)x,x\rangle.
$$
So $W(T+A)=W(T+A_1)$.

Let $x,y\in H$ be unit vectors. We have
$$
\langle A_1(x+y),x+y\rangle-\langle A_1(x-y),x-y\rangle =2\langle A_1x,y\rangle+2\langle A_1y,x\rangle=
4\langle A_1x,y\rangle
$$
since $A_1^*=A_1$.
So
$$
\frac{1}{4}\Bigl(\langle A_1(x+y),x+y\rangle-\langle A_1(x-y),x-y\rangle +i\langle A_1(x+iy),x+iy\rangle-i\langle A_1(x-iy),x-iy\rangle \Bigr)
$$
$$
=
\frac{1}{4}\bigl(4\langle A_1x,y\rangle+4i\langle A_1 x,iy\rangle\bigr)
=2\langle A_1x,y\rangle.
$$
Furthermore,
$$
\frac{1}{4}\Bigl(\langle T(x+y),x+y\rangle-\langle T(x-y),x-y\rangle +i\langle T(x+iy),x+iy\rangle-i\langle T(x-iy),x-iy\rangle \Bigr)=\langle Tx,y\rangle
$$
by the polar formula. So
\begin{align*}
&\bigl|\langle (T+2A_1)x,y\rangle\bigr|
\\
\le
&\frac{1}{4}\Bigl|\Bigl(\langle (T+A_1)(x+y),x+y\rangle-\langle (T+A_1)(x-y),x-y\rangle +i\langle (T+A_1)(x+iy),x+iy\rangle
\\
&\hskip 2cm -i\langle (T+A_1)(x-iy),x-iy\rangle \Bigr)\Bigr|
\\
\le
&\frac{1}{4}w(T+A_1)\bigl(\|x+y\|^2+\|x-y\|^2+\|x+iy\|^2+\|x-iy\|^2\bigr)
\\
=
&\frac{1}{4}w(T+A_1)\bigl(2\|x\|^2+2\|y\|^2+2\|x\|^2+2\|iy\|^2\bigr)
\\
=
&w(T+A_1)(\|x\|^2+\|y\|^2)=2w(T+A_1)
\end{align*}
by the parallelogram law. 
Hence 
$$
w(T+A_1)\ge \frac{1}{2}\sup\left\{|\langle (T+2A_1)x,y\rangle|: \|x\|=\|y\|=1\right\}=\frac{1}{2}\|T+2A_1\|
$$ 
and the other inequalities are clear.
\end{proof}

The Hilbert space convention also makes slight changes in the duality properties of the numerical range.

As in the Banach spaces setting, if $\Phi\in \cR(X)$ is such that $\Phi=T+A$ with $T$ linear operator and $A$ antilinear operator, we define $\Phi^*:H\to H$ by $\Phi^*=T^*+A^*$.
where $T^*:H\to H$ is the dual of $T$, i.e., the linear operator satisfying $\langle Tx,y\rangle=\langle x,T^*y\rangle$ for all $x, y\in X$ and $A^*:H\to H$ the antilinear dual of $A$, i.e., the antilinear operator satisfying $\langle Ax,y\rangle=\overline{\langle x,A^*y\rangle}$ for all $x,y\in H$. 

The duality satisfies the usual properties as in the Banach space setting.

If $T:H\to H$ is a~linear operator then 
$W(T^*)=\bigl\{\bar z: z\in W(T)\bigr\}$.
If $A:H\to H$ is an antilinear operator, then $W(A^*)=W(A)$. Since $W(A)$ is a~disc (if $\dim H\ge 2$) or a~circle  (if $\dim H=1$) centered at the origin, we also have $W(A^*)=\bigl\{\bar z: z\in W(A)\bigr\}$).

The same relation holds also for general real linear operators.

\begin{proposition} Let $\Phi:H\to H$ be a~real linear operator. Then 
$$
W(\Phi^*)=\bigl\{\bar z: z\in W(\Phi)\bigr\}.
$$
\end{proposition}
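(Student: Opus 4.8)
The plan is to exploit the canonical decomposition $\Phi = T + A$ with $T$ linear and $A$ antilinear, so that $\Phi^* = T^* + A^*$, and to reduce everything to a single computation of the diagonal value $\langle \Phi^* x, x\rangle$ for a unit vector $x$. First I would record the two adjoint identities in Hilbert space form: from $\langle Tx, y\rangle = \langle x, T^* y\rangle$ one gets $\langle T^* x, x\rangle = \overline{\langle Tx, x\rangle}$, and from $\langle Ax, y\rangle = \overline{\langle x, A^* y\rangle}$ one gets $\langle A^* x, y\rangle = \langle Ay, x\rangle$, hence $\langle A^* x, x\rangle = \langle Ax, x\rangle$. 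Combining these,
$$
\langle \Phi^* x, x\rangle = \overline{\langle Tx, x\rangle} + \langle Ax, x\rangle
$$
for every unit vector $x\in H$.

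Next I would produce, for a given unit vector $x$, a unit vector $y$ with $\langle \Phi y, y\rangle = \overline{\langle \Phi^* x, x\rangle}$. Note $\overline{\langle \Phi^* x, x\rangle} = \langle Tx, x\rangle + \overline{\langle Ax, x\rangle}$. Trying $y = e^{i\theta}x$ we compute $\langle \Phi y, y\rangle = \langle Tx, x\rangle + e^{-2i\theta}\langle Ax, x\rangle$, since $A(e^{i\theta}x) = e^{-i\theta}Ax$ gives $\langle A(e^{i\theta}x), e^{i\theta}x\rangle = e^{-2i\theta}\langle Ax, x\rangle$. So it suffices to choose $\theta$ with $e^{-2i\theta}\langle Ax, x\rangle = \overline{\langle Ax, x\rangle}$; writing $\langle Ax, x\rangle = r e^{i\varphi}$ with $r \ge 0$, the choice $\theta = \varphi$ works (and any $\theta$ works when $r = 0$). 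With this $y$, $\langle \Phi y, y\rangle = \overline{\langle \Phi^* x, x\rangle}$, which shows that every element of $W(\Phi^*)$ is the conjugate of an element of $W(\Phi)$, i.e.\ $W(\Phi^*) \subset \{\bar z : z \in W(\Phi)\}$.

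For the reverse inclusion I would simply apply the inclusion just proved to $\Phi^*$ in place of $\Phi$, using that $(T^*)^* = T$ and $(A^*)^* = A$ (hence $\Phi^{**} = \Phi$), which gives $W(\Phi) \subset \{\bar z : z \in W(\Phi^*)\}$, equivalently $\{\bar z : z \in W(\Phi)\} \subset W(\Phi^*)$. Together with the first inclusion this yields the claimed equality.

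There is essentially no serious obstacle here; the only points requiring care are the bookkeeping of the Hilbert-space adjoint conventions (in particular the antilinear dual, where the identity $\langle A^* x, x\rangle = \langle Ax, x\rangle$ rather than its conjugate is what makes the phase argument close up correctly) and the trivial case $\langle Ax, x\rangle = 0$, where the statement is immediate.
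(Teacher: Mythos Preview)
Your proof is correct and follows essentially the same route as the paper: both use the decomposition $\Phi=T+A$, compute $\langle\Phi^*x,x\rangle=\overline{\langle Tx,x\rangle}+\langle Ax,x\rangle$, and then rotate to $y=e^{i\theta}x$ with $\theta$ the argument of $\langle Ax,x\rangle$ so that $\langle\Phi y,y\rangle=\langle Tx,x\rangle+\overline{\langle Ax,x\rangle}=\overline{\langle\Phi^*x,x\rangle}$, concluding one inclusion and then invoking symmetry ($\Phi^{**}=\Phi$) for the other.
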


\begin{proof}
Let $\Phi=T+A$ with $T$ a~linear operator and $A$ an antilinear operator. Let $\la\in W(\Phi)$. 
Let $x\in H$ be a~unit vector such that $\langle \Phi x,x\rangle=\la$. Let $\langle Ax,x\rangle=|\langle Ax,x\rangle|\cdot e^{i\theta}$ for some $\theta\in[0,2\pi)$. We have
$$
\langle \Phi^* x,x\rangle =
\langle T^*x,x\rangle +\langle A^*x,x\rangle
=
\overline{\langle Tx,x\rangle}+\langle Ax,x\rangle
$$
$$
=\overline{\langle Tx,x\rangle +\overline{\langle Ax,x\rangle}},
$$
where
$$
\langle Tx,x\rangle +\overline{\langle Ax,x\rangle}
=\langle (T+A)e^{i\theta}x,e^{i\theta}x\rangle
\in W(\Phi).
$$
So $W(\Phi^*)\subset \bigl\{\bar z: z\in W(\Phi)\bigr\}$. The equality follows from the symmetry.
\end{proof}

It is well known that the numerical range of a~linear operator on a~two-dimensional space is always an elliptical disc (possibly degenerated to a~line segment or a~single point). The numerical range of an antilinear operator on a~two-dimensional space is even a~ball.
The next example shows that, for general real linear operators, the situation is more complicated.

\begin{example}
Let $e_1,e_2$ be the standard basis in $\CC^2$. Define $\Phi: \CC^2\to \CC^2$ by
$\Phi (\alpha e_1+\beta e_2)= \alpha e_1+ \frac{\bar\beta}{2} e_2$. Then the linear/antilinear decomposition of $\Phi=T+A$ is formed by
$$
T(\alpha e_1+\beta e_2)=\alpha e_1
$$
and
$$
A(\alpha e_1+\beta e_2)=\frac{\bar\beta  e_2}{2}.
$$
By Corollary \ref{C4},
$$
W(\Phi)=\bigcup_{|\alpha|^2+|\beta|^2=1}\{z\in\CC: |z-|\alpha|^2|\le |\beta|^2/2\}
$$
$$
=\bigcup_{0\le t\le 1}\Bigl\{z\in\CC: |z-t|\le\frac{1-t}{2}\Bigr\}.
$$
It is easy to verify that $W(\Phi)$ is the convex hull of the ball
$\{z: |z|\le 1/2\}$ and the point $1$, which is not an elliptical disc.
\end{example}

The next example is similar, but more general.

\begin{example} Let $H=\CC^2$ be a~Hilbert space and $\Phi:\CC^2\rightarrow\CC^2$ a~real linear operator defined by
\begin{equation*}
    \Phi\left(\begin{array}{l}
         x_1  \\
         x_2
    \end{array}\right)
    =
   \left(\begin{array}{l}
         \lambda_1 x_1+\eta_1\bar{x}_1  \\
         \lambda_2 x_2+\eta_2\bar{x}_2
    \end{array}\right),\quad x_1,x_2\in\CC,
\end{equation*}
where $\lambda_1,\lambda_2,\eta_1,\eta_2\in\CC$. Then $W(\Phi)={\rm conv}\left(\left(\lambda_1+\overline{\mathbb{D}}_{|\eta_1|}\right)\cup\left(\lambda_2+\overline{\mathbb{D}}_{|\eta_2|}\right)\right)$, where ${\overline{\mathbb D}}_{\gamma}=\{z\in\CC:|z|\leq\gamma\}$, for any $\gamma\geq 0$, is a closed disc or radius~$\gamma$. 

\end{example}

\begin{proof}

Let $\mathcal{A}_1,\mathcal{A}_2$ be subsets of $\CC$ defined as
\begin{equation*}
    \mathcal{A}_k=\left\{z\in\CC: |z-\lambda_k|\leq |\eta_k|\right\}=\lambda_k+|\eta_k|\overline{\mathbb{D}},\quad k\in\{1,2\}.
\end{equation*}
For the unit sphere $S_{\CC^2}$ in $\CC^2$ we have
\begin{equation*}
    S_{\CC^2}=\left\{\left(\begin{array}{c}
         re^{i\alpha}  \\
         \sqrt{1-r^2}e^{i\beta} 
    \end{array}\right)\in\CC^2: \alpha,\beta\in\RR,r\in [0,1]\right\},
\end{equation*}
and
\begin{align*}
    \left<\Phi\left(\begin{array}{c}
         re^{i\alpha}  \\
         \sqrt{1-r^2}e^{i\beta} 
    \end{array}\right),
    \left(\begin{array}{c}
         re^{i\alpha}  \\
         \sqrt{1-r^2}e^{i\beta} 
    \end{array}\right)\right>
    &=
    \left<\left(\begin{array}{c}
         r\left(\lambda_1e^{i\alpha}+\eta_1 e^{-i\alpha}\right)  \\
         \sqrt{1-r^2}\left(\lambda_2e^{i\beta}+\eta_2 e^{-i\beta}\right)
    \end{array}\right),
    \left(\begin{array}{c}
         re^{i\alpha}  \\
         \sqrt{1-r^2}e^{i\beta} 
    \end{array}\right)\right>\\
    &=
    \lambda_1 r^2+\eta_1 r^2 e^{-2i\alpha}+\lambda_2 (1-r^2)+\eta_2 (1-r^2) e^{-2i\beta}.
\end{align*}
Substituting $\alpha'=\textrm{arg}(\eta_1)-2\alpha$ and $\beta'=\textrm{arg}(\eta_2)-2\beta$, we obtain
\begin{equation*}
    W(\Phi)=\left\{r^2\left(\lambda_1+|\eta_1| e^{i\alpha'}\right)+(1-r^2)\left(\lambda_2+|\eta_2|e^{i\beta'}\right)\in\CC:\alpha',\beta'\in\RR, r\in[0,1]\right\}.
\end{equation*}
Fixing $r=1$, we obtain
\begin{equation*}
    \left\{\lambda_1+|\eta_1| e^{i\alpha'}\in\CC:\alpha'\in\RR\right\}=\partial\mathcal{A}_1\subset W(\Phi)
\end{equation*}
and similarly fixing $r=0$, we obtain
\begin{equation*}
    \left\{\lambda_2+|\eta_2| e^{i\beta'}\in\CC:\beta'\in\RR\right\}=\partial\mathcal{A}_2\subset W(\Phi).
\end{equation*}
We have
\begin{equation*}
    \textrm{conv}(\mathcal{A}_1\cup\mathcal{A}_2)
		=\left\{t\left(\lambda_1+\rho_1|\eta_1|e^{i\varphi_1}\right)+(1-t)\left(\lambda_2+\rho_2|\eta_2|e^{i\varphi_2}\right): \varphi_1,\varphi_2\in\mathbb{R},t,\rho_1,\rho_2\in [0,1]\right\}.
\end{equation*}
Taking $t=r^2, \rho_1=1,\rho_2=1,\varphi_1=\alpha',\varphi_2=\beta'$, we notice that $W(\Phi)\subset\textrm{conv}\left(\mathcal{A}_1\cup\mathcal{A}_2\right)$.
As $\partial\mathcal{A}_1\cup \partial\mathcal{A}_2\subset W(\Phi)\subset\textrm{conv}\left(\mathcal{A}_1\cup\mathcal{A}_2\right)$, from the convexity of $W(\Phi)$ we conclude that $W(\Phi)=\textrm{conv}\left(\mathcal{A}_1\cup\mathcal{A}_2\right)$.
\end{proof}

\section{Numerical results}
Calculating the numerical range of a~real linear operator is a~difficult task in general, even for the Hilbert space $H=\CC^2$. Hence, numerical simulations are useful for this problem. In~\cite{J} Johnson presented an algorithm to numerically calculate the boundary of the numerical range of linear operators acting on the Hilbert space $\CC^n$, $n\in\NN$, but to the best of our knowledge, there is no generalization of this algorithm to the case of real linear operators. It seems that the most reliable way to numerically calculate the numerical range of a~real linear operator~$\Phi$ (possessing canonical linear/antilinear decomposition $\Phi=T+A$, where $T$ is linear operator and $A$ is antilinear operator) acting on Hilbert space $H=\CC^n$, $n\in\NN$, is to first sample vectors $\left\{x_1,x_2,...,x_N\right\}$ from uniform distribution on $S_{\CC^n}$ for some $N\in\NN$. Then from Corollary~\ref{C4} and Theorem~\ref{theo:convexity}, we know that
\begin{equation*}
    \textrm{conv}\left( \bigcup_{k=1}^N \left\{z\in\CC: \left|z-\left<Tx_k,x_k\right>\right|=\left|\left<Ax_k,x_k\right>\right|\right\}\right)\subset W(T+A),
\end{equation*}
and for $N$ large enough, we expect the left-hand side of the equation above to closely resemble the $W(T+A)$.
To generate vectors from uniform distribution on $\CC^n$ it is enough to sample vectors from uniform distribution on $S_{\RR^{2n}}$ as it was described e.g. in~\cite{M} and then transform them with unitary mapping $U:\RR^{2n}\rightarrow \CC^n$, $U(x_1,...,x_{2n})=(x_1+ix_2,...,x_{2n-1}+ix_{2n})$.

Now we will present two examples of real linear operators acting on Hilbert space $\CC^2$ such that their numerical ranges are clearly neither elliptical discs nor convex hulls of the unions of two discs.

\begin{example}
Let $H=\CC^2$ be a~Hilbert space and $\Phi_1:\CC^2\rightarrow\CC^2$ be the real linear operator defined by
\begin{equation*}
    \Phi_1\left(\begin{array}{l}
         x_1  \\
         x_2
    \end{array}\right)
    =
   \left(\begin{array}{c}
         x_1+\bar{x}_1  \\
         -x_2+\bar{x}_1
    \end{array}\right),\quad x_1,x_2\in\CC.
\end{equation*}
Its numerical range is presented on Figure~\ref{fig:1} and $10^3$ points were sampled to generate it. Increasing the number of sampled points beyond $10^3$ has no visible effect on the shape of the numerical range.
\begin{figure}[H]
\label{fig:1}
 \begin{center}
  \includegraphics[width=10cm]{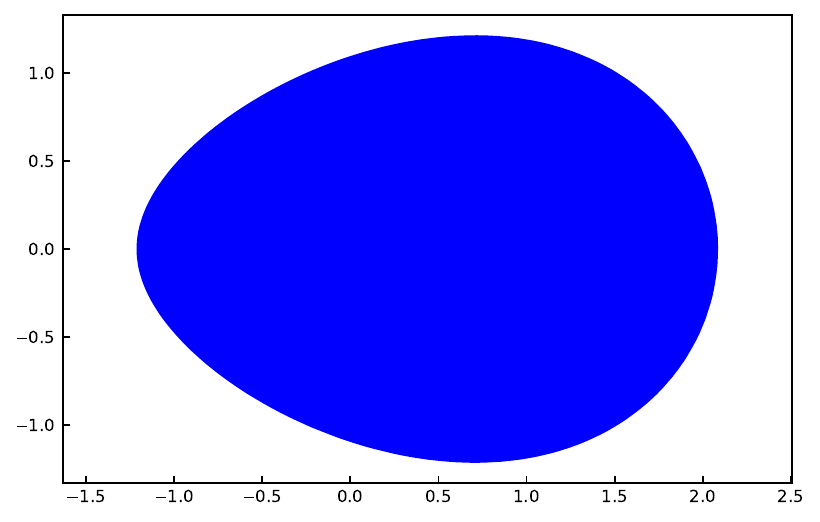}
  \caption{Numerical range of real linear operator $\Phi_1.$}
 \end{center}
\end{figure}
\end{example}

\begin{example}
Let $H=\CC^2$ be a~Hilbert space and $\Phi_2:\CC^2\rightarrow\CC^2$ be the real linear operator defined by
\begin{equation*}
    \Phi_2\left(\begin{array}{l}
         x_1  \\
         x_2
    \end{array}\right)
    =
   \left(\begin{array}{c}
         x_2  \\
         -x_1+x_2+\bar{x}_2
    \end{array}\right),\quad x_1,x_2\in\CC,
\end{equation*}
Its numerical range is presented on Figure~\ref{fig:2} and $10^3$ points were sampled to draw it. Increasing the number of sampled points beyond $10^3$ has no visible effect on the shape of the numerical range.
\begin{figure}[H]
\label{fig:2}
 \begin{center}
  \includegraphics[width=10cm]{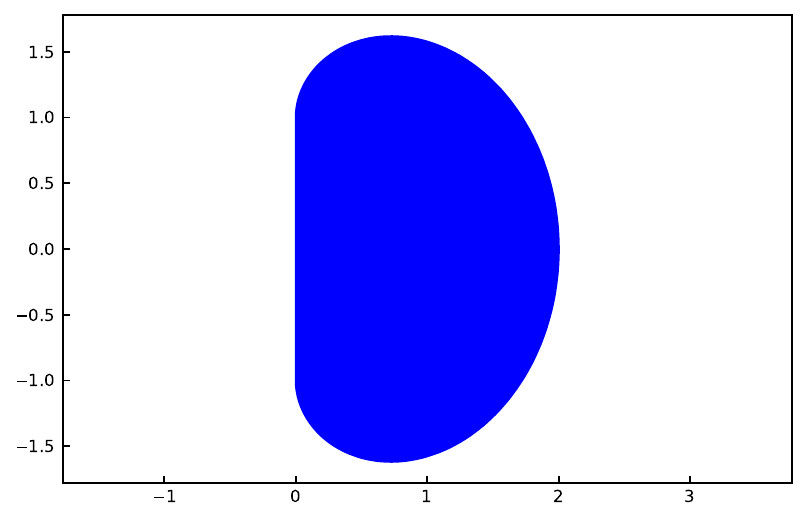}
  \caption{Numerical range of real linear operator $\Phi_2.$}
 \end{center}
\end{figure}
From the Figure~2 we may suppose that the boundary of $W(\Phi_2)$ contains line segment. It is possible to confirm this surmise by direct calculations.
Unit sphere in Hilbert space $\CC^2$ can be parametrized as follows 
\begin{equation*}
    S_{\CC^2}=\left\{u(r,\alpha,\beta)\in\CC^2: \alpha,\beta\in\RR,r\in [0,1]\right\},
\end{equation*}
where 
\begin{equation*}
    u(r,\alpha,\beta)=\left(\begin{array}{c}
         re^{i\alpha}  \\
         \sqrt{1-r^2}e^{i\beta} 
    \end{array}\right).
\end{equation*}
We have
\begin{equation*}
    \left<\Phi_2 u(r,\alpha,\beta),u(r,\alpha,\beta)\right>=2\left(1-r^2\right)\cos^2 \beta+i\left(2r\sqrt{1-r^2}\sin\left(\beta-\alpha\right)+(1-r^2)\sin 2\beta\right),
\end{equation*}
hence it is straightforward to notice that $\textrm{min}\left(\Re W(\Phi_2)\right)=0$ and
\begin{equation*}
    \left\{\Im \left<\Phi_2 x,x\right>\in \RR: x\in S_{\CC^2}, \Re \left<\Phi_2 x,x\right>=0\right\}
    =
    [-1,1],
\end{equation*}
which implies that $\{it\in\CC: t\in [-1,1]\}\subset \partial W(\Phi_2).$ 
\end{example}


\begin{thebibliography}{100}
\bibitem{AV} C.G. Ambrozie, F.-H. Vasilescu, \emph{Banach Space Complexes,} Kluwer, Dordrecht, 1995.

\bibitem{AP} K. Astala, L. P\"aiv\"arinta, \emph{Calder\'on inverse conductivity problem in the plane}, Ann. Math. \textbf{164}   (2010), 265--299.

\bibitem{AGKRZ} L. Arambasic, A. Guterman, B. Kuzma, R. Rajic, S. Zhilina, \emph{Symmetrized Birkhoff-James orthogonality in arbitrary normed spaces}, J. Math. Anal. Appl. 502 (2021), Paper No. 125203, 16 pp.

\bibitem{B} S. Banach, \emph{Theory of Linear Operators}, North-Holland Publishing Co., Amsterdam 1987.

\bibitem{BD1} E.F. Bonsall, J. Duncan, \emph{Numerical Ranges of Operators on Normed Spaces and of Elements of Normed Algebras,} London Math. Soc. Lecture Note Series 2, Cambridge Univ. Press, London, 1971.

\bibitem{BD} E.F. Bonsall, J. Duncan, \emph{Numerical Ranges II.,} London Math. Soc. Lecture Note Series 10, Cambridge Univ. Press, London, 1973.

\bibitem{CKLP} C.M. C\^amara, K. Kli\'s-Garlicka, B. \L anucha, M. Ptak, \emph{Conjugations in $L^2$ and their invariants},
Anal. Math. Phys. \textbf{10} (2020), No. 2 Paper No. 22. 

\bibitem{CKLP1} C.M. C\^amara, K. Kli\'s-Garlicka, B. \L anucha, M. Ptak, \emph{Conjugations in $L^2({\mathcal H})$}, Integral Equations Operator Theory \textbf{92} (2020), Paper No. 48.

\bibitem{CFY} R. Cibulka, M. Fabian and A.D. Ioffe, \emph{On primal regularity estimates for
single-valued mappings}, J. Fixed Point Theory Appl. \textbf{17} (2015), No. 1, 187--208.

\bibitem{ChHL1} M. Ch\=o,  I. Hur, J.E. Lee, \emph{Complex symmetric operators and isotropic vectors on Banach spaces,} J. Math. Anal. Appl. \textbf{479} (2019), 752--764.

\bibitem{ChHL} M. Ch\=o,  I. Hur, J.E. Lee, \emph{Numerical ranges of conjugations and antilinear operators on Banach spaces,} Filomat \textbf{35:8} (2021), 2715--2720.

\bibitem{FHHSPZ} M. Fabian, P. Habala, P. H\'ajek, V. Montesinos Santaluc\'\i a, J. Pelant, V. Zizler, \emph{Functional analysis and infinite-dimensional geometry}. CMS Books in Mathematics/Ouvrages de Math\'ematiques de la SMC, 8. Springer-Verlag, New York, 2001. 

\bibitem{GPP} S.R. Garcia, E. Prodan, M. Putinar, \emph{Mathematical and physical aspects of complex symmetric operators,} J. Phys. A: Math. Theory \textbf{47(35)}, 2014.

\bibitem{GP} S.R. Garcia, M. Putinar, \emph{Complex symmetric operators and applications,} Trans. Amer. Math. Soc. \textbf{358} (2006), 1285--1315.

\bibitem{GR} K.E. Gustafson, D.K.M. Rao, \emph{Numerical range. The field of values of linear operators and matrices}, Universitext, Springer-Verlag, New York, 1997. 

\bibitem{HL} I. Hur, J.E. Lee, \emph{Numerical ranges of conjugations and antilinear operators,} Linear Multilinear  Algebra \textbf{69} (2021), 2990--2997.

\bibitem{HN} M. Huhtanen and O. Nevanlinna, \emph{Real linear matrix analysis}, Banach Center Publ. \textbf{75} (2007), 171--189.

\bibitem{HP0} M. Huhtanen and A. Per{\"a}m{\"a}ki, \emph{Function theory and spectral mapping theorems for antilinear operators}, J. Operator Theory \textbf{72} (2014), 451--473.

\bibitem{HP} M. Huhtanen and J. von Pfaler, \emph{The real eigenvalue problem in $\CC^n$}, Lin. Alg. Appl. 394 (2005), 169--199.

\bibitem{HR} M. Huhtanen and S. Ruotsalainen, \emph{Real linear operator theory
and its applications,} Integral Equations Operator Theory \textbf{69} (2011), 113--132.

\bibitem{J} C.R. Johnson, \emph{Numerical Determination of the Field of Values of a~General Complex Matrix,} SIAM J. Numer. Anal. \textbf{15} (1978), no. 3, 595-602.

\bibitem{IM} T. Iwaniec, G. Martin, \emph{The Beltrami operator}, Mem. Amer. Math. Soc. 191 (2008).

\bibitem{JL} B. Jia, T. Liu, \emph{A characterization of numerical ranges for antilinear operators} Can. Math. Bulletin \textbf{Published online} (2025), 1--16.

\bibitem{KLL} E. Ko, J.E. Lee, M.-J. Lee, \emph{On properties of  C-normal operators,} 
Banach J. Math. Anal. \textbf{15} (2021), no. 4, Paper No. 65.

\bibitem{KLL1} E. Ko, J.E. Lee, M.-J. Lee, \emph{On properties of  C-normal operators II,} 
Ann. Funct. Anal. \textbf{14} (2023), no. 2, Paper No. 29.

\bibitem{KM} D. Ko\l aczek, V. M\"uller, \emph{Numerical ranges of antilinear operators}, Integral Equations Operator Theory, 
\textbf{96} (2024), Paper No. 17.

\bibitem{M} M.E. Muller, \emph{A note on a~method for generating points uniformly on n-dimensional spheres}, Commun. ACM \textbf{2} (1959), 19--20.

\bibitem{PS} M. Putinar, H.S. Shapiro, \emph{The Fridrichs operator of a~planar domain}, In: Complex Analysis, Operators and Related Topics, Oper. Theory Adv. Appl. 113, 303--330, Birkh\"auser, Basel, 2000.

\bibitem{PSW} M. Ptak, K. Simik, A. Wicher, \emph{$C$--normal operators,}
Electron. J. Linear Algebra \textbf{36} (2020), 67--79.

\bibitem{R} W. Rudin, \emph{Real and Complex Analysis,} 2nd edition, McGraw-Hill, New York, 1974.

\bibitem{T} D.V. Tret'yakov, \emph{Classification and properties of the spectrum of additive operators}, J. Math. Sci. \textbf{82} (1966), 3491--3493.

\bibitem{U} A. Uhlmann, \emph{Anti- (conjugate) linearity}, Sci. China Phys. Mech. Astron. \textbf{59} (2016), Paper No. 630301.

\end{thebibliography}
\end{document}